\documentclass[11pt]{article}

\usepackage[margin=1.2in]{geometry} 

\usepackage{amsmath, amsthm, amssymb, amsfonts} 
\usepackage{mathtools} 
\usepackage{yhmath}    

\usepackage{yfonts} 

\usepackage{graphicx}
\usepackage{epsfig, epstopdf} 
\usepackage{tikz} 
\usetikzlibrary{fadings, patterns, shadows.blur, shapes, tikzmark, calc, decorations.pathreplacing} 

\usepackage{array, multirow, tabularx, booktabs} 
\usepackage{enumitem}
\usepackage{hyperref} 
\usepackage{caption}

\hypersetup{
    colorlinks=true,
    linkcolor=blue!80!black,
    citecolor=blue!50!black,
    urlcolor=blue
}
\usepackage{titling} 
\usepackage{url} 
\usepackage{dsfont} 
\usepackage{physics} 
\usepackage{cancel} 
\usepackage{color} 
\usepackage{textcomp}
\usepackage{gensymb} 
\usepackage{extarrows} 
\usepackage{wasysym} 
\usepackage{multicol} 
\usepackage{ytableau} 
\usepackage{float} 
\usepackage[utf8]{inputenc} 
\AtBeginDocument{\RenewCommandCopy\qty\SI}
\usepackage{biblatex} 

\theoremstyle{plain}  

\newtheorem{theorem}{Theorem}[section]
\newtheorem{corollary}[theorem]{Corollary}
\newtheorem{lemma}[theorem]{Lemma}
\newtheorem{proposition}[theorem]{Proposition}
\newtheorem{definition}[theorem]{Definition}
\newtheorem{remark}[theorem]{Remark}

\newtheorem{Con}[theorem]{Conjecture}

\newcommand{\eig}{\text{eig}} 

\title{Cutoff for q-deformed classical card shuffles in the type $A$ Iwahori--Hecke algebra}

\author{
Samira Arfaee \thanks{Stony Brook University, NY 11794. E-mail: \texttt{seyedehfatemeh.arfaeezarandi@stonybrook.edu}. Supported by the NSF grant DMS-2450510.} 
\and 
Bryan Wong \thanks{Stony Brook University, NY 11794. E-mail: \texttt{bryan.wong@stonybrook.edu}.}
}

\date{}

\begin{document}

\maketitle  

\begin{abstract}

We study the mixing behavior of three q-deformed card shuffles on $\mathcal{H}_q(S_n)$: the short systematic scan introduced in \cite{DiaconisRam2000}, the $q$--deformed random--to--random shuffle introduced in \cite{AxelrodFreedBraunerChiangComminsLang2024}, and the $q$--deformed $k$--star transposition shuffle, whose $q=1$ counterpart was studied in \cite{arfaee2025shuffling}. The first two chains were diagonalized in \cite{DiaconisRam2000} and \cite{AxelrodFreedBraunerChiangComminsLang2024}, respectively. Here, we diagonalize the $q$--deformed $k$--star transposition shuffle and use the spectra of the three chains to study their mixing behavior. For fixed $q>1$, we prove that all three exhibit total variation and $\ell^2$ cutoff. For the short systematic scan, these improve both the upper and lower bounds of Diaconis and Ram \cite{DiaconisRam2000}.


\end{abstract}

\section{Introduction}

In this paper, we study three random walks of the type $A$ Iwahori--Hecke algebra $\mathcal{H}_q(S_n)$. The first is the short systematic scan of Diaconis and Ram \cite{DiaconisRam2000}, obtained by applying adjacent Metropolis moves successively in one direction and then in reverse. The second is the $q$--random--to--random shuffle of Axelrod-Freed, Brauner, Chiang, Commins and Lang \cite{AxelrodFreedBraunerChiangComminsLang2024}, obtained by composing a $q$--random--to--bottom move with a $q$--bottom--to--random move. The third is the $q$--deformed $k$--star transposition shuffle, a Hecke-algebra analog of the $k$--star shuffle studied in \cite{arfaee2025shuffling}. All three chains have the Mallows measure as their stationary distribution and can be viewed as combinations of Metropolis moves encoded by elements of $\mathcal{H}_q(S_n)$. Although their transition operators are quite different, for fixed $q>1$ they turn out to have the same mixing scales.
The spectrum of the first two chains was computed in \cite{DiaconisRam2000} and \cite{AxelrodFreedBraunerChiangComminsLang2024}, respectively. Here we diagonalize the $q$--deformed $k$--star shuffle. For fixed $q>1$, we prove that all three chains exhibit the $\ell^2$--cutoff at $\frac12\bigl(n+\log_q n\bigr)$ with a constant-order window, and the total variation cutoff at $\frac{n}{2}$ with a window of at most order $\sqrt n$.

\subsection{History of problems}\label{sec:History}

The speed with which a deck of $n$ cards becomes random depends on the moves used to shuffle it. Viewing a shuffle as a random walk on the symmetric group $S_n$ gives a natural way to study this question, and many different shuffling models have been analyzed from this point of view; see, for example,
\cite{10.1214/20-AAP1632,
BD,
bernstein2019cutoff,
DiaconisSaloffCoste1993,
Diaconis1981,
Dieker2018,
Matthews1988,
MorrisNingPeres2014,
MorrisQin2017,
MosselPeresSinclair2004,
NamNestoridi2019,
SaloffCosteZuniga2008,
Subag,
Teyssier2019,
UyemuraReyes2002}.
We recall a few classical examples.


For random transpositions, two cards are chosen independently and uniformly at
random and then transposed. If the same card is chosen twice, the deck is left
unchanged. Diaconis and Shahshahani \cite{Diaconis1981} showed that this
shuffle exhibits the total variation cutoff at
$
\frac{1}{2}n\log n.
$ 

The random--to--random shuffle uses a different type of move. A card is chosen uniformly at random, removed from the deck, and inserted in a uniformly chosen position. Diaconis and Saloff-Coste \cite{DiaconisSaloffCoste1995} showed that its mixing time is of the order $n\log n$. Uyemura-Reyes \cite{UyemuraReyes2002} later obtained the bounds $ 
\frac12 n\log n \leq t_{\mathrm{mix}}\leq 4n\log n.
$ The upper bound was subsequently improved to $2n\log n$ by Saloff-Coste and Zuniga \cite{SaloffCosteZuniga2008}, and then to $1.5324n\log n$ by Morris and Qin \cite{MorrisQin2017}. Subag \cite{Subag} obtained a lower bound on the conjectured cutoff scale, while Dieker and Saliola \cite{Dieker2018} computed the complete spectrum of the transition matrix. Using this spectral description, Bernstein and Nestoridi \cite{bernstein2019cutoff}, together with Subag's lower bound, proved the total variation cutoff at $
\frac34 n\log n-\frac14 n\log\log n
$ with a window of order $n$, confirming the conjecture of Diaconis \cite{Diaconis2003}.


For star transpositions, one position is fixed, say position $n$. At each
step, a uniformly chosen card is transposed with the card in position $n$.
Diaconis \cite{PFlour} showed that the total variation cutoff for this shuffle
occurs at
$
n\log n.
$

Hence, the cutoff times of the three shuffles are ordered as
\[
\frac{1}{2}n\log n
\;<\;
\frac{3}{4}n\log n-\frac{1}{4}n\log\log n
\;<\;
n\log n.
\]

The random and star transposition shuffles are the two endpoints of the
$k$--star transposition shuffle introduced in
\cite{arfaee2025shuffling}. For $1\leq k\leq n$, let
$
A=\{n-k+1,\ldots,n\}.
$
Apart from the holding probability, the allowed moves are chosen uniformly
from
$
\bigcup_{j\in A}\{(i,j):1\leq i<j\}.
$
We proved that this shuffle exhibits the total variation cutoff at
$
\frac{2n-(k+1)}{2(n-1)}\,n\log n.
$
When $k=1$ this is the star transposition shuffle and the cutoff time is
$n\log n$, while $k=n$ gives random transpositions and the cutoff time is
$\frac{1}{2}n\log n$.

The shuffles above all have the uniform distribution as their stationary
distribution. There are also natural shuffling models with non-uniform
stationary measures. An example is the Mallows measure. For $w\in S_n$, let
$\ell(w)$ denote its number of inversions and set
\[
    \pi(w)=\frac{q^{\ell(w)}}{\sum_{v\in S_n}q^{\ell(v)}}.
\]
When $q>1$, permutations with more inversions are given higher weight.

A natural way to sample from this measure is to use Metropolis moves based on
adjacent transpositions. Write $\theta=q^{-1}$. Given an adjacent pair,
transposition is always performed if it increases the number of inversions,
while a transposition that decreases the number of inversions is performed
with probability $\theta$. Diaconis and Ram \cite{DiaconisRam2000} observed
that these moves have an algebraic interpretation. More precisely, the
transition matrix for a Metropolis move corresponding to a simple
transposition is the matrix of left multiplication by a normalized generator
of the type $A$ Iwahori--Hecke algebra $\mathcal{H}_q(S_n)$; see
Theorem~\ref{thm:metropolis-hecke}. Thus, a sequence of Metropolis moves can be
viewed as multiplication by a product of elements of $\mathcal{H}_q(S_n)$.
This makes the representation theory of the Hecke algebra available for the
study of the chain. 

Diaconis and Ram used this connection to analyze the short systematic scan. 
Starting from the identity, they showed that
$
    \frac{n}{2}+\log_q n+c
$
scans are sufficient for convergence when $c$ is large, whereas before
$\frac{n}{4}$ scans the total variation distance tends to one.


Random--to--random also belongs to a broader algebraic family of shuffling operators. Reiner, Saliola and Welker \cite{ReinerSaliolaWelker2014} introduced the symmetrized shuffling operators, of which random--to--random is a special case. Lafreni{\'e}re \cite{Lafreniere2019} later studied the eigenvalues of this family. More recently, Axelrod-Freed, Brauner, Chiang, Commins and Lang \cite{AxelrodFreedBraunerChiangComminsLang2024} introduced a $q$--deformation of random--to--random in $\mathcal{H}_q(S_n)$ and computed its complete spectrum. They showed that the eigenvalues are polynomials in $q$ with non-negative integer coefficients, and setting $q=1$ recovers the spectrum of Dieker and Saliola \cite{Dieker2018}. Brauner, Commins, Grinberg and Saliola \cite{BCGS} later extended this construction to the $q$--deformed $k$--random--to--random family.


We take a similar approach for the $k$--star transposition shuffle. Its
transition matrix can be written in $\mathbb{C}[S_n]$ as a linear combination
of identity and Jucys--Murphy elements
$J_{n-k+1},\ldots,J_n$ \cite{arfaee2025shuffling}. These elements have
$q$--analogues $J_m(q)$ in $\mathcal{H}_q(S_n)$
\cite{Murphy1992, Ram1997}. Replacing the classical Jucys--Murphy elements by
their Hecke analogs leads to the $q$--deformed $k$--star transposition
shuffle studied in this paper.


\subsection{Main results}\label{sec:mainresults}

We consider the following three
examples. Throughout the paper, we fix $q>1$ and write $\theta=q^{-1}$.
We denote the short systematic scan by $K$ and the
$q$--random--to--random shuffle by $R$; their precise definitions are
recalled in Section~\ref{sec:eigenvalues}.

\emph{The $q$--deformed $k$--star transposition shuffle.}
Our third chain is obtained from the Hecke Jucys--Murphy elements. For
$1\leq k\leq n-1$, define
\begin{equation}\label{eq:Pdef}
P
=
\frac{1}{[n]_q}I
+
\frac{q[n-1]_q}
     {[n]_q\sum_{l=1}^{k}[n-l]_q}
\sum_{l=1}^{k}J_{n+1-l}(q),
\end{equation}
where $J_m(q)$ are the Jucys--Murphy elements of
$\mathcal{H}_q(S_n)$ defined in Section~\ref{sec:heckejm}.
The elements $J_1(q),\ldots,J_n(q)$ commute and have a common eigenbasis, so
the spectrum of $P$ can be obtained from their joint eigenvalues. 

The connection between these chains already appears before taking any limit.
For $k=1$, Proposition~\ref{prop:k1scan} shows that both $K$ and $P$ are affine
functions of the same Jucys--Murphy element $J_n(q)$. In particular, the
agreement between their mixing times is built into the operators themselves:
the same number of steps is enough for the two chains.

This is very different from what happens at $q=1$. The short systematic scan
then becomes trivial. Indeed, every Metropolis move is accepted, so the forward
and backward sweeps cancel and $K$ is the identity. The other two chains do not
degenerate: they become the usual random--to--random and $k$--star
transposition shuffles. Thus, at $q=1$ the three chains have quite different
behavior.


There is another point at which the three chains meet. Letting
$\theta\to0$, or equivalently $q\to\infty$, all three operators become the
same for every $k$; see Theorem~\ref{prop:theta0}. The limiting chain is
deterministic. Starting from a permutation, it performs exactly those swaps
which increase the number of inversions. Hence, the three chains, which are
different at $q=1$, collapse to a single dynamics in the opposite limit.

We also use this limiting dynamics in the proof. Its orbit starting from the
identity can be described explicitly; see
Corollary~\ref{cor:theta0orbit}. Comparing the three chains with this
deterministic orbit gives the total variation lower bound for all of them.


We begin with the spectrum of the $q$--deformed $k$--star transposition
shuffle. Recall that $\mathrm{SYT}(\lambda)$ is the set of standard Young
tableaux of shape $\lambda$ and that
$d_\lambda=|\mathrm{SYT}(\lambda)|$.

For $r\in\mathbb{Z}$, we write
\[
    [r]_q=\frac{q^r-1}{q-1}.
\]
In particular, for $r\geq 1$,
\[
    [r]_q=1+q+\cdots+q^{r-1},
\]
and
\[
    [-r]_q=-q^{-r}[r]_q.
\]

\begin{theorem}\label{T-EVHeckeCombined}
Let $P$ be the operator in \eqref{eq:Pdef}. For $\lambda\vdash n$ and
$S\in\mathrm{SYT}(\lambda)$, let $(i_m(S),j_m(S))$ be the box containing
$m$ and write
\[
    c_m(S)=j_m(S)-i_m(S).
\]
Then the eigenvalue of $P$ indexed by $S$ is
\begin{equation}\label{eq:EVHeckeSYT}
\eig_q(S)
=
\frac{1}{[n]_q}
+
\frac{q[n-1]_q}
     {[n]_q\sum_{l=1}^{k}[n-l]_q}
\sum_{l=1}^{k}
\big[c_{n+1-l}(S)\big]_q .
\end{equation}

For a partition $\nu$, define its $q$--diagonal index by
\[
    \textup{Diag}_q(\nu)
    =
    \sum_{(i,j)\in\nu}[j-i]_q.
\]
If $\mu\vdash n-k$ and $\mu\subseteq\lambda$, then the same eigenvalues can be
indexed by pairs $(\lambda,\mu)$ and given by
\begin{equation}\label{eq:EVHeckeDiag}
\eig_q(\lambda,\mu)
=
\frac{1}{[n]_q}
+
\frac{q[n-1]_q}
     {[n]_q\sum_{l=1}^{k}[n-l]_q}
\bigl(
\textup{Diag}_q(\lambda)-\textup{Diag}_q(\mu)
\bigr).
\end{equation}
The eigenvalue corresponding to $(\lambda,\mu)$ has multiplicity
\[
    d_\mu d_{\lambda/\mu}.
\]
\end{theorem}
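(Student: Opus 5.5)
The plan is to use the seminormal (Young) basis of the semisimple algebra $\mathcal{H}_q(S_n)$, which is semisimple for our fixed $q>1$. For each $\lambda\vdash n$, the irreducible module $S^\lambda_q$ has a basis $\{v_S : S\in\mathrm{SYT}(\lambda)\}$ on which the Hecke Jucys--Murphy elements act diagonally. With the normalization of Section~\ref{sec:heckejm} (which we would check against \cite{Murphy1992, Ram1997}), the action is
\[
J_m(q)\,v_S=[c_m(S)]_q\,v_S .
\]
Under this normalization $J_1(q)=0$, and at $q=1$ one recovers the classical content eigenvalue $c_m(S)$. The key input is the commutativity of the $J_m(q)$ together with the fact that the joint spectrum of $J_1(q),\dots,J_n(q)$ separates standard tableaux. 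This fact is proved by induction along the branching $\mathcal{H}_q(S_{m-1})\subset\mathcal{H}_q(S_m)$. It rests on two points: $J_m(q)$ commutes with $\mathcal{H}_q(S_{m-1})$, and the restriction is multiplicity free. On each summand, $J_m(q)$ acts as the difference of the central elements $\sum_{i\le m}J_i(q)$ and $\sum_{i\le m-1}J_i(q)$. The first of these acts on $S^\lambda_q$ by the scalar $\textup{Diag}_q(\lambda)$, which is the $q$--content sum.

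Given this, formula \eqref{eq:EVHeckeSYT} is immediate. Since $P$ in \eqref{eq:Pdef} is a linear combination of $I$ and $J_{n+1-l}(q)$ for $l=1,\dots,k$, it acts on $v_S$ by the scalar obtained by substituting $[c_{n+1-l}(S)]_q$ for each $J_{n+1-l}(q)$. The regular representation (left multiplication) decomposes as $\bigoplus_\lambda (S^\lambda_q)^{\oplus d_\lambda}$. So each tableau eigenvalue appears with multiplicity $d_\lambda$ summed over the copies, and $P$ is diagonalizable with exactly these eigenvalues.

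For \eqref{eq:EVHeckeDiag}, fix $S\in\mathrm{SYT}(\lambda)$ and let $\mu$ be the shape formed by the entries $1,\dots,n-k$ of $S$. Then $\mu\vdash n-k$ and $\mu\subseteq\lambda$. The boxes containing $n-k+1,\dots,n$ are exactly the boxes of the skew shape $\lambda/\mu$, so
\[
\sum_{l=1}^k[c_{n+1-l}(S)]_q=\sum_{(i,j)\in\lambda/\mu}[j-i]_q=\textup{Diag}_q(\lambda)-\textup{Diag}_q(\mu).
\]
Hence the eigenvalue depends on $S$ only through the pair $(\lambda,\mu)$.

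It remains to count multiplicities. Tableaux $S$ with a given pair $(\lambda,\mu)$ are in bijection with pairs $(T,U)$, where $T\in\mathrm{SYT}(\mu)$ and $U$ is a standard filling of $\lambda/\mu$ by $n-k+1,\dots,n$. There are $d_\mu d_{\lambda/\mu}$ such pairs. This gives multiplicity $d_\mu d_{\lambda/\mu}$ within one copy of $S^\lambda_q$. I read the statement's multiplicity as counting eigenvectors indexed by tableaux in $S^\lambda_q$, and would then remark that in the regular representation it is further multiplied by $d_\lambda$, stating the convention to match the paper. Different pairs may coincide numerically; I would note that the multiplicities then add.

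The main obstacle is pinning down the exact normalization of $J_m(q)$ used in Section~\ref{sec:heckejm}. Conventions vary: some use $q^{-1}$ powers or $T_i$ versus normalized generators, and eigenvalues can appear as $q^{c}[c]_q$ or $[c]_q$. I would first verify by direct computation on $\mathcal{H}_q(S_2)$ that $J_2(q)$ has eigenvalues $[1]_q=1$ on the trivial representation and $[-1]_q=-q^{-1}$ on the sign representation. I would then invoke the standard seminormal-form theorem for that normalization, proving the inductive step if the cited form differs by an affine rescaling.
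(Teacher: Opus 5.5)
Your proposal is correct and follows the paper's proof. You act on the seminormal basis where $J_m(q)v_S=[c_m(S)]_q v_S$, read off the eigenvalue of the affine combination $P$, and then group tableaux by the shape $\mu$ of the entries $1,\dots,n-k$, so that the $q$-content sum becomes $\textup{Diag}_q(\lambda)-\textup{Diag}_q(\mu)$ and the count is $d_\mu d_{\lambda/\mu}$. Your reading that this multiplicity is per copy of $S^\lambda_q$ is the convention the paper uses in its character sum; the extra factor $d_\lambda$ in the regular representation, and the adding of multiplicities across numerically coinciding pairs, are correct refinements that the paper leaves implicit.
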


The proof is given in Section~\ref{sec:kstar-upper}. We use the simultaneous
eigenbasis of the Jucys--Murphy elements $J_m(q)$ and then group together
tableaux for which the entries $1,\ldots,n-k$ have the same shape $\mu$.

We next turn to mixing. Although the three chains have different transition operators, their mixing behaviors agree. The upper and lower bounds are proved separately for the three operators and combined at the end of Section~\ref{sec:l2lower} to prove Theorem~\ref{T-l2cutoff}.
\begin{theorem}[$\ell^{2}$ cutoff]\label{T-l2cutoff}
Fix $q>1$, and let $X$ be one of the following three chains: the
$q$--deformed $k$--star transposition shuffle $P$, with
$1\leq k\leq n-1$, the $q$--random--to--random shuffle $R$, or the short
systematic scan $K$. For $c>0$, set
\[
    t^\pm
    =
    \frac12\bigl(n+\log_q n\pm c\bigr).
\]
Starting from the identity,
\[
\lim_{c\to\infty}\lim_{n\to\infty}
\left\|
    \frac{X_{\mathrm{id}}^{\,t^+}}{\pi}-1
\right\|_2
=0,
\]
whereas, for all sufficiently large $n$,
\[
\left\|
    \frac{X_{\mathrm{id}}^{\,t^-}}{\pi}-1
\right\|_2^2
\geq
\frac{q^{c-1}}{4}.
\]
Hence each of the three chains exhibits $\ell^2$ cutoff at
\[
    \frac12\bigl(n+\log_q n\bigr)
\]
with a window of order $1$.
\end{theorem}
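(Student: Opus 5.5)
The plan is to work throughout with the spectral formula for the $\ell^2$ distance of a reversible chain. The three operators are self-adjoint with respect to $\pi$, since they are built from self-adjoint Metropolis moves. Hence, starting from the identity,
\[
\left\|\frac{X_{\mathrm{id}}^{t}}{\pi}-1\right\|_2^2=\sum_{\beta\neq\mathrm{triv}}\beta^{2t}\,\frac{|f_\beta(\mathrm{id})|^2}{\|f_\beta\|_\pi^2}.
\]
Here $f_\beta$ runs over an orthogonal eigenbasis. Via the Hecke algebra structure, the weight of an irreducible block indexed by $\lambda$ (or by $(\lambda,\mu)$) can be expressed through $\pi(\mathrm{id})^{-1}$ times a generic-degree or dimension factor. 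We use Theorem~\ref{T-EVHeckeCombined} for $P$, and the spectra of $K$ and $R$ recalled in Section~\ref{sec:eigenvalues}.

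The key observation is that the scale $\frac12(n+\log_q n)$ comes out as follows. The stationary mass is $\pi(\mathrm{id})=1/[n]_q!\approx q^{-\binom n2}$, but the top nontrivial eigenvalues are of size $1-\Theta(q^{-1})$, not exponentially close to one. So the decay must come from a cruder structure. I expect the largest nontrivial eigenvalue to be $\beta_1\approx q^{-1}$ up to constants. For instance, for $P$ with $\lambda=(n-1,1)$, the content sum $[c]_q$ differs from its maximum by a factor $q^{-1}$ relative to $[n]_q$. Its weight should be of order $q^{n}\cdot n$, which is roughly $[n]_q$ times a multiplicity of $n$. Then $\beta_1^{2t}\cdot nq^{n}\approx q^{-2t+n+\log_q n}$, and this gives exactly $t=\frac12(n+\log_q n)$ with an $O(1)$ window.

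\textbf{Lower bound.} Keep only the single term (or block) corresponding to $\lambda=(n-1,1)$, together with a suitable $\mu$ for $P$. Compute its eigenvalue and weight exactly. At $t^-$ this gives $\beta_1^{2t^-}\cdot w\ge q^{c-1}/4$ for large $n$. The constant $1/4$ absorbs the ratios $[n-1]_q/[n]_q\to q^{-1}$ and similar quantities. This is a direct computation once the eigenvector at the identity is identified. For $K$ and $P$ with $k=1$, Proposition~\ref{prop:k1scan} lets me transfer the computation.

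\textbf{Upper bound.} This is the main obstacle. I would bound every eigenvalue indexed by $\lambda$ by $q^{-(n-\lambda_1)}$ up to a constant factor, that is, $|\beta_\lambda|\le C q^{-(n-\lambda_1)}$. I would use that contents $[c]_q$ with $c<\lambda_1-1$ lose geometric factors of $q$. For $R$ and $K$ I would use the explicit eigenvalue formulas in the same way. Next, I would bound the total weight of all $\lambda$ with $n-\lambda_1=m$ by $(Cnq^{n})^{m}/m!$ or a similar quantity, using the $q$-hook formula for generic degrees and $\sum_\lambda d_\lambda^2\le n!$ type estimates. Summing over $m$ then gives
\[
\sum_{m\ge1}\frac{(Cn q^{n} q^{-2t^+})^m}{m!}\le e^{Cq^{-c}}-1\to0 \quad\text{as } c\to\infty.
\]
The delicate points are twofold. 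First, the eigenvalue bound must be uniform over all shapes, including those with large $m$, where $\beta$ may be tiny or negative but still has absolute value at most $1$. Second, the constant $C$ must be compatible across the three chains. I expect to handle large $m$ (say $m\ge n/2$) crudely: bound $|\beta|\le q^{-n/2}$ and the total weight by $\pi(\mathrm{id})^{-1}\le q^{n^2/2}$. This crude route fails at $t\approx n/2$, so I would instead refine the estimate to $|\beta_\lambda|\le Cq^{-\Theta(n)}$ for $\lambda_1\le n/2$, combined with the weight bound. Getting this tradeoff to close is where I expect the real work.
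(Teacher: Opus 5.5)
\textbf{Verdict: genuine gap in the upper bound.} Your overall architecture is the paper's. You use the Diaconis--Ram formula $\sum_{\lambda\neq(n)}t_\lambda\chi_\lambda(X^{2t})$. For the lower bound you keep the single block $(n-1,1)$, with $t_{(n-1,1)}=q[n-1]_q>q^{n-1}$ and at least $n/2$ eigenvalues of size $q^{-1}(1-O(q^{1-n/2}))$. For $P$ the retained eigenvalue is $\beta_2$ when $k\le n/2$ and $\beta_1$ when $k>n/2$. For the upper bound you group by $j=n-\lambda_1$. The lower bound goes through, but note why: the errors are exponentially small. A ratio tending to a constant inside an eigenvalue would not be absorbed by the $\tfrac14$, since it is raised to the power $2t\approx n$.

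The same issue breaks the first step of your upper bound. Take a bound $|\beta|\le Cq^{-j}$ with fixed $C>1$. It costs $C^{2t}\approx C^{n}$, and already your bound for the $j=1$ block becomes about $\frac{nq^{n}}{q-1}\,C^{2t}q^{-2t}=\frac{C^{2t}q^{-c}}{q-1}\to\infty$ at $t^+$. Your final series silently drops this factor. What is needed is $|\beta|\le(1+O(1/n))\,q^{-j}$ for all $\lambda$ with $\lambda_1$ within a constant fraction of $n$.

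For $K$ and $R$ this holds with constant exactly $1$. For $P$ it is the substantive Lemma~\ref{lem:eigbounds}(2):
\begin{itemize}
\item reduce to the extremal tableau $T_{\lambda^{\downarrow}}$ via Lemma~\ref{lem:insertion};
\item when $k\le\lambda_1-\lambda_2$, use $[\lambda_1-s]_q[n-1]_q\le[\lambda_1-1]_q[n-s]_q$;
\item otherwise reduce to two-row shapes.
\end{itemize}
This gives $\frac{n+1}{n}q^{-j}$ for $\lambda_1>\frac{6n}{10}$, so that $(1+\frac1n)^{2t}\le e^{2}$.

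The large-$j$ regime, which you leave open, also cannot be closed by a uniform constant-factor eigenvalue bound, because for $P$ none exists. For $\lambda=(1^n)$ and $k=n-1$, the single eigenvalue is about $-\bigl(n-2-\frac{1}{q-1}\bigr)/[n]_q$, which is $\Theta(n)\,q^{-j}$ in absolute value.

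The missing idea is on the weight side. Since $n(\lambda')\ge\binom{\lambda_1}{2}$, the generic degree satisfies $t_\lambda\le q^{\,nj-\binom{j}{2}}(q-1)^{-j}$. For $\lambda_1\le\frac{6n}{10}$, the quadratic saving $q^{-\binom{j}{2}}\le q^{-\frac{8}{100}n^2+O(n)}$ swamps the crude bound $|\beta|\le kq^{-j}$. That crude bound comes from contents lying in $[-j,\lambda_1-1]$, and after raising to the power $2t$ it loses only $q^{O(n\log n)}$.

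Your proposed weight bound $(Cnq^{n})^{m}/m!$ discards exactly this saving. The factor $1/m!$ is only $e^{-\Theta(n\log n)}$, so it cannot absorb $k^{2t}\approx n^{n}$ when $m\approx\frac{4n}{10}$. Your fallback via the total weight $\pi(\mathrm{id})^{-1}$ also fails, as you observed. Keep $t_\lambda$ with its $q^{-\binom{j}{2}}$ factor, split at $\lambda_1=\frac{6n}{10}$, and the large-$j$ part becomes trivial. The remaining work is then only the sharp eigenvalue bound for $\lambda_1>\frac{6n}{10}$.
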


The mixing results stated above are for the chain starting at the identity.
For the upper bounds, this does not entail a loss: the identity is in fact the worst
starting state in $\ell^2$. This holds more generally for reversible Markov
chains arising from the Iwahori--Hecke algebra.

\begin{theorem}[The identity is the worst starting state]\label{T-worst}
Let $W$ be a finite Coxeter group, and let $X$ be a Markov chain on $W$ given
by left multiplication by an element of its Iwahori--Hecke algebra. Suppose
that $X$ is reversible with respect to $\pi$. Then, for every $t\geq0$,
\[
    \max_{x\in W}
    \left\|
        \frac{X_x^{\,t}}{\pi}-1
    \right\|_2^2
    =
    \left\|
        \frac{X_{\mathrm{id}}^{\,t}}{\pi}-1
    \right\|_2^2.
\]
In particular, the $\ell^2$ upper bound proved from the identity holds
uniformly over all starting states.
\end{theorem}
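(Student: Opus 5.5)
Our plan is to express the $\ell^2$ distance from an arbitrary start $x$ through the return probability of the reversible chain, and then show that the Hecke-algebra structure makes the return-to-start ratio $X^{2t}(x,x)/\pi(x)$ largest at $x=\mathrm{id}$. Reversibility gives the standard identity
\[
\left\|\frac{X_x^{\,t}}{\pi}-1\right\|_2^2=\frac{X^{2t}(x,x)}{\pi(x)}-1 .
\]
So it suffices to prove $X^{2t}(x,x)/\pi(x)\le X^{2t}(\mathrm{id},\mathrm{id})/\pi(\mathrm{id})$ for every $x\in W$. Note that $X^{2t}$ is again left multiplication by an element $h=a^{2t}$ of the Hecke algebra, where $a$ is the element defining $X$.

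Next we compute the diagonal entries of left multiplication by $h$ in the basis $\{T_w\}$. The algebra carries the standard trace $\tau(T_w)=\delta_{w,\mathrm{id}}$, which satisfies $\tau(T_uT_v)=\delta_{u,v^{-1}}\,q^{\ell(u)}$ (suitably normalized to match the normalization of the generators in Theorem~\ref{thm:metropolis-hecke}). Using this trace, the coefficient of $T_x$ in $hT_x$ is $q^{-\ell(x)}\tau(hT_xT_{x^{-1}})$. Since $\pi(x)\propto q^{\ell(x)}$, we obtain
\[
\frac{X^{2t}(x,x)}{\pi(x)}\propto q^{-2\ell(x)}\,\tau\bigl(h\,T_xT_{x^{-1}}\bigr).
\]
The key step is then to expand $h=\sum_\lambda h_\lambda$ by isotypic components, writing $\tau=\sum_\lambda \frac{d_\lambda}{c_\lambda}\chi_\lambda$ with $c_\lambda>0$ the Schur elements. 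Reversibility means left multiplication by $a$ is self-adjoint for the $\pi$-inner product, which is the inner product $\langle u,v\rangle=\tau(u v^{*})$ with the anti-involution $T_w\mapsto T_{w^{-1}}$. Hence $h=a^{2t}=(a^t)^*a^t$ acts in each irreducible representation, made unitary for this form, as a positive semidefinite operator $H_\lambda$. Meanwhile $q^{-\ell(x)}T_x$ acts by an operator $U_\lambda(x)$, and
\[
q^{-2\ell(x)}\tau(hT_xT_{x^{-1}})=\sum_\lambda \frac{1}{c_\lambda}\operatorname{tr}\bigl(H_\lambda U_\lambda(x)U_\lambda(x)^*\bigr).
\]
This reduces the claim to showing $U_\lambda(x)U_\lambda(x)^*\preceq I$, i.e.\ that $\|q^{-\ell(x)/2}\,\cdot\,\text{(normalized } T_x)\|_{\mathrm{op}}\le 1$ in every irreducible representation. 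Since $T_x=T_{s_1}\cdots T_{s_\ell}$ for a reduced word, it suffices to check this for a single generator. A normalized generator has eigenvalues $q$ and $-1$ (or $1$ and $-\theta$ in the Metropolis normalization) and is normal in the unitary structure. Its operator norm relative to the factor $q^{1/2}$ is therefore $\le 1$, and trace monotonicity $\operatorname{tr}(H B)\le\operatorname{tr}(H)$ for $H\succeq0$, $0\preceq B\preceq I$ finishes the argument. Equality holds at $x=\mathrm{id}$.

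The main obstacle will be getting the normalizations right so that the factors of $q^{\ell(x)}$ from $\pi$, from $\tau(T_xT_{x^{-1}})$, and from the generator norms cancel exactly. It is also necessary to check that the $\pi$-inner product really is the trace form, so that $T_s^*=T_s$ and generators are self-adjoint. For this we would verify directly that Metropolis moves are reversible for $\pi$ (which is Theorem~\ref{thm:metropolis-hecke}) and transport that to self-adjointness of each normalized $T_s$ under $\tau(uv^*)$.
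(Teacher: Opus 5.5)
Your proposal is correct and is essentially the paper's argument. The paper quotes Diaconis--Ram's expansion $\sum_{\lambda\neq\mathbf{1}}t_\lambda\,\chi^\lambda_H(\widetilde T_x^{*}X^{2t}\widetilde T_x)$, which you rederive from $X^{2t}(x,x)/\pi(x)$ and the canonical trace. It then applies $\operatorname{tr}(A^*BA)\le\operatorname{tr}(B)$ with $X^{2t}\succeq 0$ and $\|\widetilde T_x\|\le 1$, exactly as you do. The differences are minor: the paper gets the contraction from stochasticity plus Jensen, while you get it from each $\widetilde T_{s}$ being self-adjoint with spectrum in $\{1,-\theta\}$, and you make explicit the unitary structure on each $S^\lambda_q$ that the paper leaves implicit.

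When you write it up, fix two normalizations. The character expansion is $\tau=\sum_\lambda c_\lambda^{-1}\chi_\lambda$, equivalently $P_W(q)\,\tau=\sum_\lambda t_\lambda\chi_\lambda$, not $d_\lambda/c_\lambda$; only positivity of the weights matters here. Also, the operator you must bound is exactly $U_\lambda(x)=\widetilde T_x=q^{-\ell(x)}T_x$, with no $q^{1/2}$ factors, since $q^{-1/2}T_s$ has norm $q^{1/2}>1$.
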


Theorem~\ref{T-worst} is proved in
Section~\ref{sec:kstar-upper}; no information about the spectrum of $X$ is
needed. The proof uses only the Hecke-algebra structure, together with
reversibility and the fact that the normalized generators
$\widetilde T_{s_i}$ are stochastic.

The upper bound also gives a total variation upper bound by (\ref{eq:tvl2}).
The lower bound in total variation requires a different argument and is proved in Section~\ref{sec:l1lower}.

\begin{theorem}[Total variation cutoff]\label{T-tv}
Fix $q>1$, and let $X$ be as in Theorem~\ref{T-l2cutoff}. Starting from the
identity, for every integer $t$ such that $n-2t\geq2$,
\[
    \bigl\|X_{\mathrm{id}}^{\,t}-\pi\bigr\|_{\mathrm{TV}}
    \geq
    1-
    \frac{n}
    {(q-1)\binom{n-2t}{2}}.
\]
In particular, if $\alpha>0$, then
\[
    \lim_{n\to\infty}
    d_n\left(\frac n2-\alpha\sqrt n\right)
    \geq
    1-\frac{1}{2(q-1)\alpha^2},
\]
where
\[
    d_n(t)=\max_x\|X_x^t-\pi\|_{\mathrm{TV}}.
\]
On the other hand, the $\ell^2$ upper bound gives
\[
    d_n\left(\frac n2+\alpha\sqrt n\right)\longrightarrow0
    \qquad\text{for every }\alpha>0.
\]
Consequently,
\[
\lim_{\alpha\to\infty}
\lim_{n\to\infty}
d_n\left(\frac n2-\alpha\sqrt n\right)=1,
\qquad
\lim_{\alpha\to\infty}
\lim_{n\to\infty}
d_n\left(\frac n2+\alpha\sqrt n\right)=0.
\]
Thus the three chains exhibit the total variation cutoff at $\frac{n}{2}$, with a window
of at most order $\sqrt n$.
\end{theorem}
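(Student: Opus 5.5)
The plan is a distinguishing-statistic argument: use the number of inversions $\ell(w)$, or rather the number of non-inversions $N(w)=\binom n2-\ell(w)$. Under $\pi$ with $q>1$, $N$ is a sum of independent variables. Indeed, $\ell$ decomposes into independent truncated geometric counts $\ell=\sum_{m}Y_m$, with $Y_m\in\{0,\dots,m-1\}$ and $P(Y_m=m-1-j)\propto\theta^{j}$. Hence
\[
E_\pi N=\sum_m E_\pi(m-1-Y_m)\le \sum_m \frac{\theta}{1-\theta}=\frac{n}{q-1}.
\]
Markov's inequality then gives
\[
\pi\bigl(N\ge \tbinom{n-2t}{2}\bigr)\le \frac{n}{(q-1)\binom{n-2t}{2}}.
\]
The set $A=\{w:N(w)\ge\binom{n-2t}{2}\}$ will be the test event. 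It remains to show $X^t_{\mathrm{id}}(A)=1$. Then $\|X^t_{\mathrm{id}}-\pi\|_{TV}\ge 1-\pi(A)$, which is the displayed bound.

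The key step is deterministic. After $t$ steps of any of the three chains started at the identity, every reachable permutation has at least $\binom{n-2t}{2}$ non-inversions. I would prove this by comparison with the $\theta\to0$ limit, Theorem~\ref{prop:theta0} and Corollary~\ref{cor:theta0orbit}. Each chain is a combination of Metropolis moves, i.e.\ products of normalized generators $\widetilde T_{s_i}$. The support of $\widetilde T_{s_i}$ applied to $w$ is contained in $\{w,s_iw\}$ for every $\theta$, so the support of one step does not depend on $\theta$ beyond possibly adding downward moves. Downward moves only increase $N$. I therefore want a monotone statement: define a partial order (e.g.\ weak or Bruhat order, in which $N$ is antimonotone), and show that one step of $X$ maps every $w\le v$ to permutations below the image of $v$ under the deterministic $\theta=0$ dynamics. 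Concretely, I would show that the support of $X$ applied to $w$ lies in the lower interval of $D(w)$, where $D$ is the limiting deterministic operator, and that $D$ is order preserving. Induction then gives $\operatorname{supp}X^t_{\mathrm{id}}\subseteq[\mathrm{id},D^t(\mathrm{id})]$. Corollary~\ref{cor:theta0orbit} describes $D^t(\mathrm{id})$ explicitly, and I expect it shows that the first and last $t$ cards are moved while the middle block of $n-2t$ cards stays in increasing relative order. That gives $N(D^t(\mathrm{id}))\ge\binom{n-2t}{2}$ and hence the claim for all of $\operatorname{supp}X^t_{\mathrm{id}}$. If Bruhat monotonicity is awkward, an alternative is to track directly an invariant: a set of $n-2t$ values whose relative order has never been touched. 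One step of any chain is a scan (forward then backward), so it can disturb at most one new value at each end of the untouched block.

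For the asymptotics, put $t=n/2-\alpha\sqrt n$. Then $\binom{n-2t}{2}\sim 2\alpha^2 n$, and the bound becomes $1-\frac{1}{2(q-1)\alpha^2}+o(1)$; also $d_n\ge\|X^t_{\mathrm{id}}-\pi\|_{TV}$. For the upper bound, $t=n/2+\alpha\sqrt n$ exceeds $\frac12(n+\log_q n+c)$ for every fixed $c$ once $n$ is large. Theorem~\ref{T-l2cutoff}, combined with Theorem~\ref{T-worst} to make it uniform in the starting state and with \eqref{eq:tvl2} to pass from $\ell^2$ to total variation, gives $d_n\to0$. Sending $\alpha\to\infty$ yields both limits.

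The main obstacle is the support/monotonicity step. It has to be verified separately for $K$, $R$ and $P$ from their Hecke expressions. For $R$ and $P$ the steps are not single scans but mixtures of products of generators, such as Jucys--Murphy elements $J_m(q)$, which are sums over transpositions $(i\,m)$ written as products of $T_{s}$'s. I would first check that each summand is a product of $\widetilde T_{s_j}$ along a path of adjacent positions, so that it moves any card by at most one "sweep". Then I would argue that from the identity no summand can destroy more non-inversions than the deterministic sweep.
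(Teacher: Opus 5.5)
Your proposal is correct and follows the paper's route. You get the almost-sure bound $D(X^t)\ge\binom{n-2t}{2}$ by comparing with the deterministic $\theta=0$ scan orbit (Corollary~\ref{cor:theta0orbit}, as in Lemma~\ref{lem:coupling} and Corollary~\ref{cor:reduce}), apply Markov's inequality with $\mathbb{E}_\pi(D)\le\frac{n-1}{q-1}$, and use the $\ell^2$ upper bound together with Theorem~\ref{T-worst} and \eqref{eq:tvl2} for the other side. Your Bruhat-order formulation is the right way to make the comparison rigorous: the support of one step from $w$ lies below $(1\,n)\star w$, and $w\mapsto(1\,n)\star w$ is Bruhat-monotone. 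This is exactly what justifies the paper's step $\ell(Z^t_X)\le\ell(K_0^t(\mathrm{id}))$, which a comparison of lengths alone would not give, so commit to Bruhat order rather than weak order there.
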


For the upper bound, we keep the generic degree $t_\lambda$
(Definition~\ref{def:genericdegree}) in the spectral sum and estimate it
directly. This avoids replacing it by the bound
\[
    t_\lambda
    \leq
    \theta^{\binom{\lambda_1}{2}-\binom n2}d_\lambda
\]
from Lemma~7.2(a) of \cite{DiaconisRam2000}. For the total variation lower
bound, we instead use the degeneration at $\theta=0$. The limiting dynamics
is deterministic, so the required estimate reduces to counting
non-inversions along its orbit.


The scale $\sqrt n$ appears only in this lower-bound argument. The upper bound
already gives mixing at
\[
    \frac n2+\frac12\log_q n+O(1).
\]
We therefore do not expect $\sqrt n$ to be the correct total variation
window. In Conjecture~\ref{conj:tvcutoff}, we conjecture cutoff at
\[
    \frac12\bigl(n+\log_q n\bigr)
\]
with a window of order $1$.

\subsection{Organization}\label{sec:organisation}

The paper is organized as follows. In Section~\ref{sec:background}, we collect the background that will be used throughout the paper. We recall Metropolis chains on Coxeter groups and their interpretation in the Iwahori--Hecke algebra, the $\ell^2$ formula of Diaconis and Ram, the necessary representation theory of $\mathcal{H}_q(S_n)$, and the Jucys--Murphy elements. We also introduce the three chains and the Demazure product that will be used in the total variation lower bound. Section~\ref{sec:boundeigen} contains the spectral part of the paper. We diagonalize the $q$--deformed $k$--star transposition shuffle, bound its eigenvalues, and prove the $\ell^2$ upper bound. We then obtain the corresponding upper bounds for the $q$--random--to--random shuffle and the short systematic scan. Section~\ref{sec:l2lower} gives the matching $\ell^2$ lower bounds for the three chains, completing the proof of the $\ell^2$ cutoff result. In Section~\ref{sec:theta0}, we study the three chains in the limit $\theta=0$ and show that they all reduce to the same deterministic dynamics. Finally, in Section~\ref{sec:l1lower}, we use this limiting dynamics to obtain the total variation lower bound and complete the proof of total variation cutoff.


\section{Background and Preliminaries}\label{sec:background}

\subsection{Metropolis chains on Coxeter groups}\label{sec:metropolis}

We first recall the Metropolis chains on Coxeter groups introduced in
\cite{DiaconisRam2000}. Let $W$ be a finite Coxeter group generated by the
simple reflections $s_1,\ldots,s_r$, and let $\ell$ denote the length function
on $W$. Fix $0<\theta\leq 1$ and set $q=\theta^{-1}$. We consider the
probability measure
\begin{equation}\label{eq:stationary}
\pi(w)
=
\frac{q^{\ell(w)}}{P_W(q)},
\qquad
P_W(q)=\sum_{w\in W}q^{\ell(w)}.
\end{equation}
The polynomial $P_W(q)$ is the Poincar\'{e} polynomial of $W$. If
$d_1,\ldots,d_r$ are the degrees of $W$, then

$$
    P_W(q)
    =
    \prod_{i=1}^r\frac{q^{d_i}-1}{q-1}.
$$

For $W=S_n$, the degrees are $2,\ldots,n$, and therefore

$$
    P_{S_n}(q)=[n]_q!.
$$

For each simple reflection $s_i$, define the Markov chain $K_i$ by

$$
K_i(x,y)
=
\begin{cases}
1, & y=s_i x \text{ and } \ell(s_i x)>\ell(x),\\
\theta, & y=s_i x \text{ and } \ell(s_i x)<\ell(x),\\
1-\theta, & y=x \text{ and } \ell(s_i x)<\ell(x),\\
0, & \text{otherwise.}
\end{cases}
$$

In other words, starting from $x$, we multiply on the left by $s_i$. If this
increases the length, the move is always made. If it decreases the length, the
move is made with probability $\theta$ and otherwise the chain stays at $x$.

The chains that we consider later are obtained from the $K_i$'s by taking
products and convex combinations. We will use the following facts.

\begin{lemma}\label{lem:reversible}
\begin{enumerate}
\item Each $K_i$ is reversible with respect to $\pi$.

\item Every product $K_{i_1}\cdots K_{i_m}$ has stationary distribution
$\pi$. The same is true for any convex combination of such products.

\item If $i_1,\ldots,i_m$ is a palindrome, then
$K_{i_1}\cdots K_{i_m}$ is reversible with respect to $\pi$.
A convex combination of palindromic products is also reversible.
More generally, an operator of the form $A^*A$ is reversible.
\end{enumerate}
\end{lemma}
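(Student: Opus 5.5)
The plan is to prove the three parts in order, each resting on the previous one and on a direct computation with the kernel $K_i$.

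For part 1, I would check detailed balance $\pi(x)K_i(x,y)=\pi(y)K_i(y,x)$ for $x\neq y$. The only nonzero off-diagonal entries are for $y=s_ix$. Since $\ell(s_ix)=\ell(x)\pm1$, we may assume without loss of generality that $\ell(y)=\ell(x)+1$. Then $K_i(x,y)=1$ and $K_i(y,x)=\theta$, because $s_iy=x$ and $\ell(s_iy)<\ell(y)$. Hence
\[
\pi(x)K_i(x,y)=\frac{q^{\ell(x)}}{P_W(q)},
\qquad
\pi(y)K_i(y,x)=\frac{q^{\ell(x)+1}\,q^{-1}}{P_W(q)},
\]
and these agree. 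Diagonal entries satisfy detailed balance trivially. I would also note that each row of $K_i$ sums to $1$: either the move always happens, or it happens with probability $\theta$ and the chain holds with probability $1-\theta$. So $K_i$ is stochastic.

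For part 2, reversibility gives $\pi K_i=\pi$, since $\sum_x\pi(x)K_i(x,y)=\sum_x\pi(y)K_i(y,x)=\pi(y)$. It follows by induction that $\pi K_{i_1}\cdots K_{i_m}=\pi$. Stationarity and stochasticity are both preserved under convex combinations by linearity, which gives the second sentence of part 2.

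For part 3, I would work in $\ell^2(\pi)$ with inner product $\langle f,g\rangle_\pi=\sum_x f(x)g(x)\pi(x)$. Reversibility of a stochastic matrix $M$ with respect to $\pi$ is equivalent to $M$ being self-adjoint, $M=M^*$, in this space. Here the adjoint is $M^*(x,y)=\pi(y)M(y,x)/\pi(x)$. By part 1, each $K_i$ is self-adjoint. Then
\[
(K_{i_1}\cdots K_{i_m})^*=K_{i_m}^*\cdots K_{i_1}^*=K_{i_m}\cdots K_{i_1},
\]
and this equals the original product exactly when the word is a palindrome. Self-adjointness is preserved under real convex combinations, which handles convex combinations of palindromic products. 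In the same way, $(A^*A)^*=A^*A$ for any operator $A$. If $A$ is stochastic with $\pi A=\pi$, then $A^*$ is also stochastic, because $\sum_y A^*(x,y)=(\pi A)(x)/\pi(x)=1$. Hence $A^*A$ is a genuine stochastic matrix that is reversible with respect to $\pi$.

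The only point needing care is this last claim. It requires $A$ to preserve $\pi$, which part 2 supplies for all the operators considered, so that $A^*$ is a Markov kernel rather than just an abstract adjoint. I expect no real obstacle beyond that.
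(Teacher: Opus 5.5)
Your proposal is correct and follows the paper's proof essentially step for step: detailed balance for $K_i$, iterating $\pi K_i=\pi$ for products and using linearity for convex combinations, then self-adjointness on $L^2(\pi)$ for palindromic products and for $A^*A$. Your extra check is a small improvement over the paper. You verify that $A^*$ is stochastic whenever $A$ is stochastic with $\pi A=\pi$, so $A^*A$ is a genuine Markov kernel. The paper only asserts reversibility of $A^*A$ \emph{whenever} it is a Markov operator.
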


\begin{proof}
For part (1), fix $x\in W$ and suppose first that
$\ell(s_i x)=\ell(x)+1$. By \eqref{eq:stationary},

$$
    \pi(s_i x)=q\,\pi(x).
$$

On the other hand, by the definition of $K_i$,

$$
    K_i(x,s_i x)=1
    \qquad\text{and}\qquad
    K_i(s_i x,x)=\theta.
$$

Since $\theta=q^{-1}$, we get

$$
    \pi(x)K_i(x,s_i x)
    =\pi(x)
    =q\theta\,\pi(x)
    =\pi(s_i x)K_i(s_i x,x).
$$

The case $\ell(s_i x)=\ell(x)-1$ is the same calculation with $x$ and
$s_i x$ interchanged. Therefore $K_i$ satisfies detailed balance with
respect to $\pi$, and hence it is reversible.

For part (2), part (1) gives $\pi K_i=\pi$ for every $i$. Applying this
successively, we obtain

$$
    \pi K_{i_1}K_{i_2}\cdots K_{i_m}
    =
    \pi K_{i_2}\cdots K_{i_m}
    =\cdots=\pi.
$$

Thus every product of the $K_i$'s has stationary distribution $\pi$.
If

$$
    K=\sum_j a_j K^{(j)},
    \qquad a_j\geq0,\qquad \sum_j a_j=1,
$$

where each $K^{(j)}$ is such a product, then

$$
    \pi K
    =
    \sum_j a_j\pi K^{(j)}
    =
    \sum_j a_j\pi
    =
    \pi.
$$

This proves the statement for convex combinations.

For part (3), recall that a Markov chain is reversible with respect to
$\pi$ if and only if its transition operator is self-adjoint on
$L^2(\pi)$. By part (1), $K_i^*=K_i$ for every $i$, and therefore

$$
    \bigl(K_{i_1}K_{i_2}\cdots K_{i_m}\bigr)^*
    =
    K_{i_m}\cdots K_{i_2}K_{i_1}.
$$

If $(i_1,\ldots,i_m)$ is a palindrome, the right-hand side is equal to
$K_{i_1}\cdots K_{i_m}$, so the product is reversible. The same argument
shows that a real convex combination of palindromic products is
self-adjoint. Finally, for any operator $A$,

$$
    (A^*A)^*=A^*A,
$$

so whenever $A^*A$ is a Markov operator it is also reversible with
respect to $\pi$.
\end{proof}

We now apply Lemma~\ref{lem:reversible} to the three chains considered in this
paper. For the short systematic scan, one step is

$$
    K_{n-1}\cdots K_2K_1^2K_2\cdots K_{n-1},
$$

which is a palindromic product, and hence it is reversible by
Lemma~\ref{lem:reversible}(3).

For the $q$--deformed $k$--star transposition shuffle, recall that for
$1\leq i<m$,

$$
    (i\,m)
    =
    s_i s_{i+1}\cdots s_{m-2}s_{m-1}
    s_{m-2}\cdots s_{i+1}s_i.
$$

Thus the element $\widetilde T_{(i,m)}$ corresponds to a palindromic
product of the elementary Metropolis chains. Since $J_m(q)$ is a linear
combination of these elements, the operator $P$ is a convex combination
of palindromic products and is therefore reversible.

Finally, the $q$--random--to--random shuffle can be written as

$$
    R
    =
    \frac{1}{[n]_q^2}\,
    \mathcal B_n^*(q)\mathcal B_n(q).
$$

Since $\mathcal B_n^*(q)$ is the adjoint of $\mathcal B_n(q)$, this is of
the form $A^*A$. Lemma~\ref{lem:reversible}(3) therefore also gives the
reversibility of $R$.


\subsection{Distances, mixing time}\label{sec:distances}

Let $\mu$ and $\pi$ be probability measures on a finite set $X$ with $\pi(x)>0$
for every $x$. We will use two distances to measure convergence to $\pi$: total
variation and $\ell^{2}(\pi)$. They are given by
$$
  \|\mu-\pi\|_{TV}
  \;=\; \max_{A \subseteq X}\bigl|\mu(A)-\pi(A)\bigr|
  \;=\; \tfrac12\sum_{x \in X}\bigl|\mu(x)-\pi(x)\bigr|,
$$
$$
  \Bigl\|\frac{\mu}{\pi}-1\Bigr\|_{2}^{2}
  \;=\; \sum_{x \in X}\pi(x)\Bigl(\frac{\mu(x)}{\pi(x)}-1\Bigr)^{\!2}
  \;=\; \sum_{x \in X}\frac{\mu(x)^{2}}{\pi(x)} \;-\; 1 .
$$
The two distances are related by Cauchy--Schwarz:
\begin{equation}\label{eq:tvl2}
  2\,\|\mu-\pi\|_{TV} \;\le\; \Bigl\|\frac{\mu}{\pi}-1\Bigr\|_{2}.
\end{equation}
Thus every $\ell^{2}$ upper bound also gives an upper bound in total variation.
For lower bounds this is no longer true, and later we will need a separate
argument in total variation.

Writing $K^{t}_{x}$ for the law of a chain $K$ after $t$ steps when it is
started at $x$, we define the total variation mixing time by
$$
  t_{\mathrm{mix}}(\varepsilon)
  \;=\; \min\Bigl\{\,t \;:\; \max_{x \in X}
        \bigl\|K^{t}_{x}-\pi\bigr\|_{TV} \le \varepsilon \Bigr\}.
$$
Since the maximum is taken over all starting states, a lower bound from one
particular starting state is already enough to give a lower bound on the mixing
time. A family of chains indexed by $n$ exhibits a \emph{cutoff} at $t_n$ if,
for every $\varepsilon\in(0,1)$, the distance tends to $1$ at time
$(1-\varepsilon)t_n$ and to $0$ at time $(1+\varepsilon)t_n$. The definition
of $\ell^{2}$ cutoff is the same, with
$\|\frac{K^{t}_{x}}{\pi(x)}-1\|_{2}$ in place of $\|K^{t}_{x}-\pi\|_{TV}$.

For the chains in this paper, the $\ell^{2}$ distance can be written in terms
of irreducible characters of the Iwahori--Hecke algebra. This is the formula
that we use throughout the mixing-time arguments. If $K$ is a reversible
Markov chain on $W$ given by the left multiplication by an element of the
Iwahori--Hecke algebra $H$, then the $\ell^{2}$ distance decomposes over the
irreducible $H$--modules. The following result is Proposition~4.8 of
\cite{DiaconisRam2000}.

\begin{theorem}[{\cite[Proposition~4.8]{DiaconisRam2000}}]
\label{thm:DRexpansion}
Let $H$ be the Iwahori--Hecke algebra corresponding to a finite real
reflection group $W$.  Let $K$ be a reversible Markov chain on $W$ with
stationary distribution $\pi$ determined
by left multiplication by an element of $H$ (also denoted by $K$).
Let $K_x^{t}$ denote the Markov chain started at $x$ after $t$ steps.
Then

\begin{enumerate}
\item
\begin{equation}\label{eq:l2norm}
\big\| \frac{K_x^{t}}{\pi(x)} -1\big\|_{2}^2
=q^{-2\ell(x)}
\sum_{\lambda\ne \mathbf{1}}
t_\lambda\,
\chi^\lambda_{H}\!\big(T_{x^{-1}}K^{2t}T_x\big),
\end{equation}

\item
\begin{equation}
\sum_{x\in W}\pi(x)\,
\big\| \frac{K_x^{t}}{\pi(x)} -1\big\|_{2}^2
=
\sum_{\lambda\ne \mathbf{1}}
d_\lambda\,
\chi^\lambda_{H}\!\big(K^{2t}\big).
\end{equation}
\end{enumerate}

\end{theorem}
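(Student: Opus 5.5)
This result is quoted from \cite[Proposition~4.8]{DiaconisRam2000}, but a self-contained proof would go as follows. We identify functions on $W$ with elements of $H$ via $f\mapsto\sum_w f(w)T_w$, and use the standard trace $\tau(h)=$ coefficient of $T_{\mathrm{id}}$ in $h$. We need three facts about this trace:
\[
\tau(T_xT_{y^{-1}})=\delta_{xy}q^{\ell(x)},\qquad \tau=\frac{1}{P_W(q)}\sum_\lambda t_\lambda\chi^\lambda_H,
\]
where $t_{\mathbf 1}\chi^{\mathbf 1}_H$ is the trivial (index) term. The first fact says the inner product on $L^2(\pi)$ is, up to normalization and the factor $q^{\ell}$, the trace form. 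The second is the Schur-element decomposition of $\tau$ defining the generic degrees $t_\lambda$.

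The first step is to write the density $K^t_x(\cdot)/\pi(\cdot)$ as an element of $H$. Since $K$ acts by left multiplication by an element $k\in H$, the law after $t$ steps from $x$ is the coefficient vector of $k^tT_x$, suitably normalized. Reversibility gives $\pi(y)K^t(y,x)=\pi(x)K^t(x,y)$. Hence
\[
\sum_y\frac{K^t_x(y)^2}{\pi(y)}=\frac{K^{2t}(x,x)}{\pi(x)}.
\]
This is the return probability of the chain to $x$ after $2t$ steps, divided by $\pi(x)$.

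The second step is to express $K^{2t}(x,x)$ through the trace. The diagonal entry of left multiplication by $k^{2t}$ at $x$ is the $T_x$-coefficient of $k^{2t}T_x$. Using the trace pairing, this equals $q^{-\ell(x)}\tau(T_{x^{-1}}k^{2t}T_x)$, up to the normalization convention for the generators $\widetilde T_{s_i}$. Dividing by $\pi(x)=q^{\ell(x)}/P_W(q)$ produces $P_W(q)q^{-2\ell(x)}\tau(T_{x^{-1}}k^{2t}T_x)$. Expanding $\tau$ in characters, the $P_W(q)$ cancels. The trivial term equals $1$: the trivial representation sends each normalized generator to $1$, the stochastic element $k$ acts by $1$, and $\chi^{\mathbf 1}(T_{x^{-1}}T_x)=q^{2\ell(x)}$. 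This term therefore cancels the $-1$, which gives (1).

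For (2), multiply (1) by $\pi(x)$ and sum over $x$. Since $\chi^\lambda$ is a trace, $\chi^\lambda(T_{x^{-1}}k^{2t}T_x)=\chi^\lambda(k^{2t}T_xT_{x^{-1}})$. The key identity is
\[
\sum_x q^{-\ell(x)}T_xT_{x^{-1}}\ \text{acts on the }\lambda\text{-isotypic part as the scalar}\ \frac{P_W(q)}{t_\lambda}.
\]
This is the standard Schur element or Casimir identity, $\sum_x q^{-\ell(x)}T_x\,e\,T_{x^{-1}}=(P_W/t_\lambda)\,\chi^\lambda(e)$ applied centrally. Substituting it cancels $t_\lambda$ and $P_W$ and leaves $d_\lambda\chi^\lambda(K^{2t})$.

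I expect the main obstacle to be bookkeeping rather than ideas. The normalizations must be made consistent: $\widetilde T_{s_i}$ versus $T_{s_i}$, the use of $q$ versus $\theta$, and left versus right action. These are what produce the exact factor $q^{-2\ell(x)}$. I would fix conventions by checking the $W=S_2$ case first.
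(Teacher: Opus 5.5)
The paper gives no proof of this statement; it is quoted from Diaconis--Ram, so your argument has to stand on its own. Your proof of part~(1) does. Reversibility turns the $\ell^2$ distance into $K^{2t}(x,x)/\pi(x)-1$. The diagonal entry equals $q^{-\ell(x)}\tau(T_{x^{-1}}k^{2t}T_x)$ exactly, with no normalization caveat needed, because rescaling the basis $T_w\mapsto\widetilde T_w$ does not change diagonal matrix entries. Then the expansion $\tau=P_W(q)^{-1}\sum_\lambda t_\lambda\chi^\lambda_H$, together with $t_{\mathbf 1}=1$ and $\chi^{\mathbf 1}(k)=1$, gives (1).

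Part~(2), however, rests on a false identity. The central element $z=\sum_x q^{-\ell(x)}T_xT_{x^{-1}}$ does not act on $S^\lambda_q$ by $P_W(q)/t_\lambda$; it acts by $d_\lambda P_W(q)/t_\lambda$. Your own Casimir identity $\rho_\lambda\bigl(\sum_x q^{-\ell(x)}T_x\,e\,T_{x^{-1}}\bigr)=\frac{P_W(q)}{t_\lambda}\chi^\lambda_H(e)\,\mathrm{Id}$, applied with $e=1$, produces the factor $\chi^\lambda_H(1)=d_\lambda$. Two quick sanity checks confirm this. At $q=1$ one has $z=|W|\cdot 1$, which acts by $|W|$ on every irreducible, not by $|W|/d_\lambda$. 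Also $\tau(z)=\sum_x q^{-\ell(x)}q^{\ell(x)}=|W|$ forces $\sum_\lambda t_\lambda d_\lambda s_\lambda=P_W(q)\,|W|$ for the scalars $s_\lambda$. With the scalar you wrote, $\frac{1}{P_W(q)}\sum_{\lambda\ne\mathbf 1}t_\lambda\chi^\lambda_H(k^{2t}z)$ evaluates to $\sum_{\lambda\ne\mathbf 1}\chi^\lambda_H(K^{2t})$, so the $d_\lambda$ in your final line does not follow from your computation. With the corrected scalar the computation gives exactly (2).

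A simpler route avoids Schur elements entirely. Observe that
\[
\sum_x\pi(x)\Bigl(\frac{K^{2t}(x,x)}{\pi(x)}-1\Bigr)=\sum_x K^{2t}(x,x)-1 .
\]
The sum $\sum_x K^{2t}(x,x)$ is the trace of left multiplication by $k^{2t}$ on the regular module. By semisimplicity the regular module is $\bigoplus_\lambda (S^\lambda_q)^{\oplus d_\lambda}$, so this trace is $\sum_\lambda d_\lambda\chi^\lambda_H(k^{2t})$. The trivial summand contributes $1$ and cancels the $-1$.
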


Here $\chi^{\lambda}_{H}$ is the character of the irreducible module
$S^{\lambda}_q$, $d_\lambda=\dim S^{\lambda}_q$, and $t_\lambda$ is its
\emph{generic degree} (Definition~\ref{def:genericdegree}). The term
$\mathbf{1}$ denotes the trivial representation; for $S_n$ this is
$\lambda=(n)$. Its contribution is the stationary part, so it is removed from
the two sums above.

There are two parts of the theorem that we will use repeatedly. First, the
character in the right-hand side can be written as a sum of powers of the
eigenvalues. Since the three chains are diagonal in the seminormal basis, their characters
can be expressed in terms of their eigenvalues using \eqref{eq:charsum}.
Thus the mixing problem reduces to controlling the resulting spectral sums. This is what we do in Section~\ref{sec:boundeigen}.

The second point is that the two identities in
Theorem~\ref{thm:DRexpansion} put different weights on the irreducible
representations. Part~(1), which gives the distance from a fixed starting
state, contains the generic degree $t_\lambda$. Part~(2), which averages over
the starting state with respect to $\pi$, contains only the dimension
$d_\lambda$. When $q>1$ these quantities can be very different. 



We will mostly use part~(1) with the chain started at the identity. Since
$\ell(\mathrm{id})=0$, the prefactor is $1$ and the two conjugating terms
disappear. In this case
$$
  \Bigl\|\frac{K^{t}_{\mathrm{id}}}{\pi}-1\Bigr\|_{2}^{2}
  \;=\; \sum_{\lambda \ne (n)}
  t_\lambda\,\chi^{\lambda}_{H}\bigl(K^{2t}\bigr),
$$
which is the form used in Sections~\ref{sec:boundeigen} and
\ref{sec:l2lower}. Finally, when $q=1$ we have $t_\lambda=d_\lambda$, and the
two formulas reduce to the classical Fourier--analytic formulas for random
walks on the symmetric group.


\subsection{The Iwahori--Hecke Algebra and Jucys--Murphy Elements}
\label{sec:heckejm}

We recall the definition of the Iwahori--Hecke algebra and the connection with the
Metropolis chains from the previous section. We then introduce the Hecke
Jucys--Murphy elements, which will be used to diagonalize the $q$--deformed
$k$--star transposition shuffle.

\begin{definition}[Type $A$ Iwahori--Hecke algebra]\label{def:hecke}
For $q\in\mathbb C$, the \emph{Type~$A$ Iwahori--Hecke algebra}
$\mathcal H_q(S_n)$ is the associative $\mathbb C$--algebra generated by
\[
T_{s_1},T_{s_2},\ldots,T_{s_{n-1}},
\]
with relations
\begin{enumerate}
\item $T_{s_i}^2=(q-1)T_{s_i}+q, \quad 1\le i\le n-1$;
\item $T_{s_i}T_{s_j}=T_{s_j}T_{s_i}, \quad|i-j|\ge2$;
\item
$
T_{s_i}T_{s_{i+1}}T_{s_i}
=
T_{s_{i+1}}T_{s_i}T_{s_{i+1}},
\quad 1\le i\le n-2.
$
\end{enumerate}
If $w=s_{i_1}\cdots s_{i_r}$ is a reduced expression, set
\[
T_w=T_{s_{i_1}}\cdots T_{s_{i_r}}.
\]
The braid relations imply that $T_w$ does not depend on the choice of reduced
expression, and $\{T_w\}_{w\in S_n}$ is a basis of $\mathcal H_q(S_n)$.
At $q=1$ the quadratic relation becomes $T_{s_i}^2=1$, so
$
\mathcal H_1(S_n)=\mathbb C[S_n].
$
\end{definition}

To see the connection with the Metropolis chains, it is convenient to use the
normalized basis
\[
\widetilde T_w=q^{-\ell(w)}T_w.
\]
In particular, $\widetilde T_i=q^{-1}T_i$. The following result says that left
multiplication by $\widetilde T_i$ is exactly one Metropolis update.

\begin{theorem}[{\cite[Theorem~4.3]{DiaconisRam2000}}]\label{thm:metropolis-hecke}
Let $W$ be a finite Coxeter group and let $H$ be its Iwahori--Hecke algebra with
basis $\{T_w\}_{w\in W}$. Set
\[
q=\theta^{-1},\qquad
\widetilde T_i=\frac{T_i}{q},\qquad
\widetilde T_w=q^{-\ell(w)}T_w.
\]
Then the Metropolis chain $K_i$ is the matrix of left multiplication by
$\widetilde T_i$ with respect to the basis
$\{\widetilde T_w\}_{w\in W}$. More precisely,
\[
\widetilde T_i\widetilde T_w=
\begin{cases}
\widetilde T_{s_iw},
& \ell(s_iw)>\ell(w),\\[1mm]
(1-\theta)\widetilde T_w+\theta\widetilde T_{s_iw},
& \ell(s_iw)<\ell(w).
\end{cases}
\]
\end{theorem}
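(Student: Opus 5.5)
The identity in Theorem~\ref{thm:metropolis-hecke} is a direct computation in the basis $\{T_w\}$, using only the quadratic relation and the standard multiplication rule $T_{s_i}T_w=T_{s_iw}$ whenever $\ell(s_iw)>\ell(w)$. After establishing the identity, we read it as the statement that the matrix of left multiplication by $\widetilde T_i$ is $K_i$. We split according to whether left multiplication by $s_i$ increases or decreases length.

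\emph{Case $\ell(s_iw)>\ell(w)$.} If $w=s_{j_1}\cdots s_{j_r}$ is reduced, then $s_is_{j_1}\cdots s_{j_r}$ is a reduced expression for $s_iw$. By the definition of $T$ on reduced words (independence of the reduced word is recalled in Definition~\ref{def:hecke}), $T_{s_i}T_w=T_{s_iw}$. Multiplying by $q^{-1}q^{-\ell(w)}=q^{-\ell(s_iw)}$ gives $\widetilde T_i\widetilde T_w=\widetilde T_{s_iw}$.

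\emph{Case $\ell(s_iw)<\ell(w)$.} Put $v=s_iw$. Then $\ell(s_iv)>\ell(v)$, so the first case gives $T_w=T_{s_i}T_v$. The quadratic relation then yields
\[
T_{s_i}T_w=T_{s_i}^2T_v=(q-1)T_{s_i}T_v+qT_v=(q-1)T_w+qT_{s_iw}.
\]
We now normalize, using $\ell(w)=\ell(s_iw)+1$:
\[
\widetilde T_i\widetilde T_w=q^{-1-\ell(w)}\bigl((q-1)T_w+qT_{s_iw}\bigr)=(1-\theta)\widetilde T_w+q^{-\ell(w)}T_{s_iw}.
\]
Since $q^{-\ell(w)}=\theta\,q^{-\ell(s_iw)}$, the last term equals $\theta\widetilde T_{s_iw}$, which is the claimed formula.

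\emph{Reading off the matrix.} Write $\widetilde T_i\widetilde T_x=\sum_y M(x,y)\widetilde T_y$, so that row $x$ of the matrix lists the coefficients of $\widetilde T_i\widetilde T_x$. Comparing with the definition of $K_i$, the two cases give exactly $M(x,s_ix)=1$ when $s_i$ increases length, and $M(x,s_ix)=\theta$, $M(x,x)=1-\theta$ otherwise; all remaining entries vanish. Hence $M=K_i$.

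For a general finite Coxeter group $W$ the argument is identical, since it uses only the quadratic relation and well-definedness of $T_w$ on reduced words. There is no substantial obstacle. The only point requiring care is to be consistent about the row convention, since it determines whether the chain corresponds to left multiplication acting on rows or on columns.
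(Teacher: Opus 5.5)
Your proof is correct. It derives both cases from $T_{s_i}T_w=T_{s_iw}$ when $\ell(s_iw)>\ell(w)$ together with the quadratic relation. It also gets the normalization $q^{-\ell(w)}=\theta\,q^{-\ell(s_iw)}$ and the row convention for reading off $K_i$ right.

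The paper does not prove this statement itself; it simply cites Theorem~4.3 of Diaconis and Ram. Your argument is the standard direct computation behind that result, so it is essentially the same approach.
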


Thus the products of Metropolis moves that appeared in the previous section can
be regarded as products in the Hecke algebra. We will use this point of view for
all three chains.

We next recall the Jucys--Murphy elements. In the group algebra
$\mathbb C[S_n]$ they are
\[
J_m=\sum_{i=1}^{m-1}(i\,m),
\qquad 2\le m\le n.
\]
Their Hecke analogues are
\[
J_m(q)=\sum_{i=1}^{m-1}q^{i-m}T_{(i\,m)},
\quad 2\le m\le n,
\quad \text{where} \quad
T_{(i\,m)}
:=
T_iT_{i+1}\cdots T_{m-2}T_{m-1}
T_{m-2}\cdots T_{i+1}T_i.
\]
Since
$
\ell((i\,m))=2(m-i)-1,
$
this can also be written in the normalized basis as
\[
J_m(q)
=
\sum_{i=1}^{m-1}
q^{m-1-i}\widetilde T_{(i\,m)},
\qquad 2\le m\le n.
\]
Setting $q=1$ gives back the usual Jucys--Murphy element $J_m$. The elements $J_2(q),\ldots,J_n(q)$ commute. On each irreducible
$\mathcal H_q(S_n)$--module they are diagonal in the seminormal basis indexed by
standard Young tableaux. If $S$ is a standard Young tableau and the entry $m$ is
in row $i_m(S)$ and column $j_m(S)$, then
\[
J_m(q)v_S
=
[j_m(S)-i_m(S)]_q\,v_S.
\]


\subsection{Partitions and tableaux}\label{sec:tableaux}

We use the notation for partitions and tableaux from \cite{Sagan2001}. A
\emph{partition} of $n$, written $\lambda\vdash n$, is a sequence
\[
    \lambda=(\lambda_1\geq\lambda_2\geq\cdots\geq\lambda_m>0)
\]
with $\sum_i\lambda_i=n$. We identify $\lambda$ with its Young diagram, whose
boxes are the pairs $(i,j)$ with $1\leq i\leq m$ and $1\leq j\leq\lambda_i$.
The conjugate partition $\lambda'$ is obtained by transposing the diagram, so
\[
    \lambda'_j=\#\{i:\lambda_i\geq j\}.
\]
In particular, $\lambda'_1=m$ is the number of rows of $\lambda$.

For a box $b=(i,j)\in\lambda$, its \emph{content} and \emph{hook length} are
\[
    c(b)=j-i,
    \qquad
    h_b=\lambda_i-j+\lambda'_j-i+1.
\]
We will also use
\[
    n(\lambda)=\sum_{i\geq1}(i-1)\lambda_i
              =\sum_{b\in\lambda}(\mathrm{row}(b)-1),
    \qquad
    n(\lambda')=\sum_{i\geq1}\binom{\lambda_i}{2},
\]
and the identity
\[
    \sum_{b\in\lambda}h_b=n(\lambda)+n(\lambda')+n.
\]

A \emph{standard Young tableau} of shape $\lambda$ is a filling of the boxes of
$\lambda$ with $1,\ldots,n$, each used once, such that the entries increase
along rows and down columns. We write $\mathrm{SYT}(\lambda)$ for the set of
standard Young tableaux of shape $\lambda$ and
\[
    d_\lambda=|\mathrm{SYT}(\lambda)|.
\]
The hook length formula gives
\[
    d_\lambda=\frac{n!}{\prod_{b\in\lambda}h_b},
\]
and, by the Robinson--Schensted correspondence,
\[
    \sum_{\lambda\vdash n}d_\lambda^2=n!.
\]
If $S\in\mathrm{SYT}(\lambda)$, we write $S(i,j)$ for the entry in the box
$(i,j)$. We also write $i_m(S)$ and $j_m(S)$ for the row and column containing
$m$, and
\[
    c_m(S)=j_m(S)-i_m(S)
\]
for the content of that box.

We will also need skew shapes. If $\mu\subseteq\lambda$, meaning that
$\mu_i\leq\lambda_i$ for every $i$, then $\lambda/\mu$ consists of the boxes
of $\lambda$ that are not in $\mu$. Let $d_{\lambda/\mu}$ denote the number of
standard fillings of this skew shape. If we restrict a tableau of shape
$\lambda$ to the entries $1,\ldots,n-k$, their boxes form a Young diagram
$\mu\vdash n-k$. This gives the branching identity
\begin{equation}\label{eq:branching}
  \sum_{\substack{\mu \subseteq \lambda \\ |\mu| = n-k}}
  d_\mu\, d_{\lambda/\mu} \;=\; d_\lambda .
\end{equation}
This is the identity that gives the multiplicities in
Theorem~\ref{T-EVHeckeCombined}. A skew shape $\lambda/\mu$ is called a
\emph{horizontal strip} if it contains at most one box in each column.

Finally, we use two particular tableaux of shape $\lambda$. Let
$T_{\lambda^{\downarrow}}$ be obtained by filling the first column from top to
bottom, then the second column, and so on. Similarly,
$T_{\lambda^{\to}}$ is obtained by filling the first row from left to right,
then the second row, and so on. These are the two superstandard tableaux. They
will appear in Lemma~\ref{lem:insertion}, where they give the two extreme ways
of placing the largest entries in a tableau of fixed shape.


\subsection{Representations of \texorpdfstring{$\mathcal{H}_q(S_n)$}{H\_q(S\_n)} and generic degrees}
\label{sec:reps}

For $q>0$ not a root of unity, the algebra $\mathcal{H}_q(S_n)$ is split
semisimple. Its irreducible modules are indexed by partitions $\lambda\vdash n$,
and we denote the module corresponding to $\lambda$ by $S_q^\lambda$. Its
dimension is
\[
    \dim S_q^\lambda=d_\lambda;
\]
see \cite{DipperJames1986}. We use the seminormal basis
\[
    \{v_S:S\in\mathrm{SYT}(\lambda)\}
\]
of \cite{Murphy1992, Ram1997}. As recalled in Section~\ref{sec:heckejm}, the
Jucys--Murphy elements are diagonal in this basis. When $q=1$, this becomes the
usual Young orthogonal form for the irreducible $S_n$--module $S^\lambda$, and
the eigenvalues of the $J_k$ are the ordinary contents.

We write $\chi_\lambda$, or $\chi_H^\lambda$ when we want to emphasize the
Hecke algebra, for the character of $S_q^\lambda$. If an element
$A\in\mathcal{H}_q(S_n)$ is diagonal in the seminormal basis, with eigenvalue
$\mathrm{eig}_q(S)$ on $v_S$, then
\begin{equation}\label{eq:charsum}
  \chi_\lambda\bigl(A^{\,r}\bigr)
  \;=\; \sum_{S \in \mathrm{SYT}(\lambda)} \mathrm{eig}_q(S)^{\,r} ,
\end{equation}
which is the form of the character that we use in
Theorem~\ref{thm:DRexpansion}.

\begin{definition}[generic degree]\label{def:genericdegree}
For $\lambda \vdash n$ the \emph{generic degree} of $S^{\lambda}_q$ is
\[
  t_\lambda
  \;=\; q^{\,n(\lambda)}\,
        \frac{\prod_{l=1}^{n}\bigl(q^{\,l}-1\bigr)}
             {\prod_{b\in\lambda}\bigl(q^{\,h_b}-1\bigr)}
  \;=\; q^{\binom{n}{2}-n(\lambda')}\,
        \frac{\prod_{l=1}^{n}\bigl(1-q^{-l}\bigr)}
             {\prod_{b\in\lambda}\bigl(1-q^{-h_b}\bigr)} ,
\]
the weight attached to $S^{\lambda}_q$ in \textup{(7.1)} of
\cite{DiaconisRam2000}.
\end{definition}

The first expression can also be written in terms of $q$--integers. Cancelling
$(q-1)^n$ from the numerator and denominator gives
\[
    t_\lambda
    =q^{n(\lambda)}
      \frac{\prod_{l=1}^n[l]_q}
           {\prod_{b\in\lambda}[h_b]_q}.
\]

The distinction between $t_\lambda$ and $d_\lambda$ is important for the
mixing estimates. In part~(1) of Theorem~\ref{thm:DRexpansion}, which gives the
$\ell^2$ distance from a fixed starting state, the irreducible indexed by
$\lambda$ is weighted by $t_\lambda$. In part~(2), where the starting state is
averaged with respect to $\pi$, the weight is $d_\lambda$. These two quantities
can have very different sizes when $q>1$. For example, if
$\lambda=(n-1,1)$, then
\[
    n(\lambda')=\binom{n-1}{2},
    \qquad
    t_{(n-1,1)}=q[n-1]_q,
\qquad
    d_{(n-1,1)}=n-1.
\]
Thus, for fixed $q>1$, $t_{(n-1,1)}$ is of order $q^n$, whereas the dimension
is only of order $n$. This difference is one of the reasons that the chain
started at the identity mixes on a different scale from the $\pi$--average
over starting states.


\subsection{Eigenvalues}\label{sec:eigenvalues}

The eigenvalues of the $q$--deformed $k$--star transposition shuffle $P$
are given in Theorem~\ref{T-EVHeckeCombined}. We record here the corresponding formulas for the
short systematic scan and the $q$--random--to--random shuffle, using the
notation of Section~\ref{sec:heckejm}.

\begin{definition}\label{def:three}
Let $q>1$.
\begin{enumerate}

\item In the \emph{short systematic scan}, one sweeps through the adjacent pairs in one
direction and then back, applying the Metropolis move at each pair. Thus one scan is the
operator
\[
  K \;=\; \widetilde T_{s_{n-1}}\cdots\widetilde T_{s_2}\,
          \widetilde T_{s_1}^{2}\,
          \widetilde T_{s_2}\cdots\widetilde T_{s_{n-1}} .
\]
Diaconis and Ram \cite{DiaconisRam2000} computed its eigenvalues in the seminormal basis.
For $S\in\mathrm{SYT}(\lambda)$, the corresponding eigenvalue is
\begin{equation}\label{eq:scaneig}
  \eig_q(S) \;=\; q^{\,c_n(S)-n+1} ,
\end{equation}
where $c_n(S)$ is the content of the box containing $n$. This is Theorem~7.5(a) of
\cite{DiaconisRam2000}, with their eigenvalue
$\theta^{\,n-1-c_n(S)}$ written in terms of $q=\theta^{-1}$.

\item The \emph{$q$--random--to--random shuffle} is obtained by composing a
$q$--random--to--bottom move with a $q$--bottom--to--random move
\cite{AxelrodFreedBraunerChiangComminsLang2024}. In our notation the two operators are
\[
  \mathcal{B}^{*}_n(q) = \sum_{i=1}^{n} q^{\,n-i}\,
        \widetilde T_{s_i s_{i+1}\cdots s_{n-1}},
  \qquad
  \mathcal{B}_n(q) = \sum_{j=1}^{n} q^{\,n-j}\,
        \widetilde T_{s_{n-1}s_{n-2}\cdots s_j},
\]
where the terms with $i=n$ and $j=n$ are $\widetilde T_e$. The transition operator is
\[
  R \;=\; \frac{1}{[n]_q^{2}}\,\mathcal{B}^{*}_n(q)\,\mathcal{B}_n(q).
\]
Each of $\mathcal{B}^{*}_n(q)$ and $\mathcal{B}_n(q)$ has coefficient sum $[n]_q$, so
$R$ is a convex combination of basis elements.

The eigenvalues of $R$ were computed in
\cite{AxelrodFreedBraunerChiangComminsLang2024}. They are indexed by partitions
$\mu\subseteq\lambda$ for which $\lambda/\mu$ is a horizontal strip, and are given by
\begin{equation}\label{eq:r2reig}
  \eig_q(\lambda,\mu) \;=\; \frac{E_{\lambda/\mu}(q)}{[n]_q^{2}},
  \qquad
  E_{\lambda/\mu}(q) \;=\; q^{\,n}\sum_{b\in\lambda/\mu}\bigl[c(b)\bigr]_q
     \;+\; \sum_{r=|\mu|+1}^{n} q^{\,n-r}[r]_q .
\end{equation}

\end{enumerate}
\end{definition}





\subsection{The Demazure product and the \texorpdfstring{$\theta=0$}{theta=0} chain}
\label{sec:demazure}

Here we recall the definition of Demazure product from \cite{li2024demazure} and \cite{Norton_1979} .
\begin{definition} \label{def:zerohecke} The Demazure product (also called the $0$-Hecke product or greedy product) on $S_n$ is defined by the following,

\[
 s_i \star  w =
\begin{cases}
s_i w, & \text{if } \ell(s_i w) > \ell(w),\\
w, & \text{if } \ell(s_i w) < \ell(w).
\end{cases}.
\]

This is also the same as taking the product of $\tilde T_i$ in $H(q)$ and letting $q$ tend to infinity. This provides $S_n$ with a monoidal structure. 
\end{definition}

We give a Markov chain interpretation of this product. Let $(X^t)$ be a reversible Markov chain on a Coxeter group $W$ determined by multiplication by an element, $S = \sum_{w \in W} c_w\tilde T_w$ of the Iwahori-Hecke algebra $H(q)$. The process moves from $X^t$ to $X^{t+1}$ as follows: first, $\tilde T_w$ is chosen according to the weights $c_w$. Then, writing $\tilde T_w = \tilde T_{s_{i_1}}\tilde T_{s_{i_2}}\ldots \tilde T_{s_{i_l}}$ as a reduced word, each $\tilde T_{s_{i_j}}$ is applied according to $K_{i_j}$. Define a chain $(Z^t_X)$ via the same process, except each $T_{s_{i_j}}$ is applied according to $K_{i_j}$, with $\theta = 0$. What we have defined is a Markov chain $Z^t$ which is determined by multiplication of the same element $S$, except in the Iwahori-Hecke algebra $H(\infty)$. This chain is not reversible, since it is impossible for the chain to move from higher length elements to lower length elements.

\begin{definition}\label{def:subword}
Given a word $Q = \sigma_1 \sigma_2 \ldots \sigma_m$, where each $\sigma_i \in \{s_1,\ldots,s_{n-1}\}$, we say $\alpha$ is a subword of $Q$ if $\alpha = \sigma_{i_1}\ldots \sigma_{i_l}$, $i_j$ is a subsequence of $1,2,\ldots,m$.
\end{definition}
It is true that $\alpha$ being a subword of $Q$ implies $\ell(\alpha) \le \ell(Q)$. Note that it is known, see [Corollary 2.2.3 of \cite{bjorner2005combinatorics}] that these definitions do not depend on the choice of reduced subword expression of $Q$.




\section{Bounding the eigenvalues} \label{sec:boundeigen}

\begin{lemma}[Basic properties of $q$--integers]\label{lem:qinteger-facts}
Let $q>0$, $q\neq1$, and define the $q$--integer by
$$
[a]_q = \frac{q^{\,a}-1}{q-1}.
$$
Then the following hold:
\begin{enumerate}[label=\roman*)]
\item (\emph{Monotonicity}) If $a\le b$, then $[a]_q \le [b]_q$ for all $q>0$.\\
Hence, the function $a\mapsto [a]_q$ is nondecreasing, and strictly increasing when $a<b$.

\item (\emph{Symmetry}) For any real $a$,
$$
[-a]_q = -\,q^{-a}\,[a]_q.
$$

\item (\emph{Additivity identity}) For all integers $a,b\ge0$,
$$
[a+b]_q = [a]_q + q^{\,a}\,[b]_q.
$$

\item (\emph{Classical limit}) As $q\to1$,
$$
\lim_{q\to1} [a]_q = a.
$$

\item (\emph{Upper Bound}) For all integers $a$ and $q \ge 1$,
$$
[a]_q \le a\cdot q^{a-1}
$$

\item\label{it:qintlead} (\emph{Infinite bias limit}) For every integer $a\ge1$, as
$q\to\infty$, equivalently as $\theta = q^{-1}\to 0$,
$$
  [a]_q \;=\; 1+q+\cdots+q^{\,a-1}
        \;=\; q^{\,a-1}\bigl(1+\theta+\cdots+\theta^{\,a-1}\bigr)
        \;=\; q^{\,a-1}\bigl(1+O(\theta)\bigr),
$$
so that $\lim_{q\to\infty} q^{\,1-a}\,[a]_q = 1$. We abbreviate this by writing
$[a]_q = q^{\,a-1}$ at $\theta = 0$.

\end{enumerate}
\end{lemma}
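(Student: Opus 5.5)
Each item follows from the closed form $[a]_q=(q^a-1)/(q-1)$ or from the finite geometric sum $1+q+\cdots+q^{a-1}$ for $a\ge1$, so I would go through the six parts in order, using elementary algebra or one-variable calculus.

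For (i), note that $a\mapsto q^a$ is strictly increasing when $q>1$ and strictly decreasing when $0<q<1$. The denominator $q-1$ has the same sign as this monotonicity, so $[a]_q$ is strictly increasing in $a$ in both cases.

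For (ii), compute directly:
\[
[-a]_q=\frac{q^{-a}-1}{q-1}=-q^{-a}\frac{q^a-1}{q-1}=-q^{-a}[a]_q .
\]
For (iii), split the numerator:
\[
q^{a+b}-1=(q^a-1)+q^a(q^b-1),
\]
and divide by $q-1$. For (iv), when $a\ge1$ the sum $1+q+\cdots+q^{a-1}$ tends to $a$ as $q\to1$. For general real or negative $a$, use L'H\^opital's rule, or the derivative of $q^a$ at $q=1$, which is $a$.

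Part (v) is the only one needing care, because it is stated for all integers $a$.
\begin{itemize}
\item For $a\ge1$ and $q\ge1$, each term satisfies $q^j\le q^{a-1}$ for $0\le j\le a-1$. Summing the $a$ terms gives $[a]_q\le a q^{a-1}$.
\item For $a=0$ both sides are $0$.
\item For $a=-r$ with $r\ge1$, part (ii) gives $[-r]_q=-q^{-r}[r]_q$. Using $[r]_q\ge r$ for $q\ge1$, this is at most $-rq^{-r}$, and $-rq^{-r}\le -rq^{-r-1}$ because $q^{-r}\ge q^{-r-1}$.
\item The case $q=1$ is read as the classical limit from (iv).
\end{itemize}
So the main obstacle is just this sign bookkeeping for negative $a$.

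For (vi), factor out the top power:
\[
[a]_q=q^{a-1}\bigl(1+\theta+\cdots+\theta^{a-1}\bigr).
\]
The bracket is $1+O(\theta)$ as $\theta\to0$, which gives $q^{1-a}[a]_q\to1$.
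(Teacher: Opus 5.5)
Your proposal is correct and follows essentially the same route as the paper, which only remarks that everything follows from the geometric sum $1+q+\cdots+q^{a-1}$ for $a\ge1$ and from direct algebra on $(q^{a}-1)/(q-1)$. Your version is more complete in two places. First, your sign argument for (i) covers all real $a$, whereas the paper's geometric-sum remark only covers $a\ge1$, even though monotonicity at negative contents is used later, e.g.\ in Lemma~\ref{lem:eigbounds}. Second, you check the negative-$a$ case of (v) explicitly.
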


\begin{proof}
For integer $a\ge1$, one has $[a]_q = 1 + q + \cdots + q^{a-1}$, 
which is nondecreasing in $a$ for $q>0$, with strict increase when $a<b$.
The remaining properties follow from direct algebraic manipulation of 
$[a]_q = \frac{q^{a}-1}{q-1}$.
\end{proof}


\subsection{Upper bound for the \texorpdfstring{$q$--deformed $k$--star}{q-deformed k-star} transposition shuffle}\label{sec:kstar-upper}

We begin by proving Theorem~\ref{T-EVHeckeCombined}, which diagonalizes $P$. The point is
that $P$ is an affine combination of Jucys--Murphy elements, and those commute and are
already diagonal in the seminormal basis.

\begin{proof}[Proof of Theorem~\ref{T-EVHeckeCombined}]
This is the $q$--deformed analogue of Theorem~1.1 and Lemma~3.1 of
\cite{arfaee2025shuffling}. The only change is that the classical
Jucys--Murphy elements are replaced by their Hecke analogues and the ordinary
contents are replaced by $q$--contents.

Indeed, in the seminormal basis,

$$
    J_m(q)v_S=[c_m(S)]_qv_S.
$$

Since $P$ is a linear combination of the identity and
$J_{n-k+1}(q),\ldots,J_n(q)$, its eigenvalue corresponding to
$S\in\mathrm{SYT}(\lambda)$ is

$$
  \eig_q(S)
  = \frac{1}{[n]_q}
    + \frac{q[n-1]_q}{[n]_q\sum_{l=1}^{k}[n-l]_q}
      \sum_{l=1}^{k}[c_{n+1-l}(S)]_q,
$$

which gives \eqref{eq:EVHeckeSYT}.

As in the proof of Theorem~1.1 of \cite{arfaee2025shuffling}, let $\mu$ be the
shape formed by the entries $1,\ldots,n-k$. Then the remaining boxes form
$\lambda/\mu$, and

$$
    \sum_{l=1}^{k}[c_{n+1-l}(S)]_q
    =
    \textup{Diag}_q(\lambda)-\textup{Diag}_q(\mu).
$$

This gives \eqref{eq:EVHeckeDiag}.
Indeed, for fixed $\mu\subseteq\lambda$, such a tableau is obtained
by choosing a standard tableau of shape $\mu$ on the entries
$1,\ldots,n-k$ and a standard skew tableau of shape $\lambda/\mu$
on the remaining $k$ entries. Thus there are
$d_\mu d_{\lambda/\mu}$ such tableaux.
\end{proof}

Throughout the rest of this subsection we abbreviate
$$
  \eig_q(S)
  = \frac{1}{[n]_q} + \frac{q[n-1]_q}{[n]_q}\cdot\frac{D_S}{\Sigma_k},
  \qquad
  D_S = \sum_{l=1}^{k}\bigl[c_{n+1-l}(S)\bigr]_q .
$$

\begin{lemma}[$q$-analogue of Lemma 4.1 of \cite{arfaee2025shuffling}]\label{lem:insertion}
Let $\lambda \vdash n$ and $S \in \mathrm{SYT}(\lambda)$. Then
$$
  \mathrm{eig}_q\bigl(T_{\lambda^{\to}}\bigr)\ \le\ \mathrm{eig}_q(S)\ \le\
  \mathrm{eig}_q\bigl(T_{\lambda^{\downarrow}}\bigr),
$$
where $T_{\lambda^{\downarrow}}$ and $T_{\lambda^{\to}}$ denote the
column-insertion and row-insertion tableaux of shape $\lambda$.
\end{lemma}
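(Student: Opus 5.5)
The plan is to reduce both inequalities to a statement about which sets of $k$ boxes can hold the entries $n-k+1,\ldots,n$. By \eqref{eq:EVHeckeSYT}, $\eig_q(S)$ is an increasing affine function of $D_S$, because the coefficient $q[n-1]_q/([n]_q\Sigma_k)$ is positive. So it suffices to show
\[
D_{T_{\lambda^{\to}}}\le D_S\le D_{T_{\lambda^{\downarrow}}}.
\]
If $\mu\vdash n-k$ is the shape of the entries $1,\ldots,n-k$ of $S$, then $D_S=\sum_{b\in\lambda/\mu}f(c(b))$ with $f(c)=[c]_q$. By Lemma~\ref{lem:qinteger-facts}(i), $f$ is strictly increasing on $\mathbb Z$. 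The problem is therefore purely combinatorial: among all $\mu\subseteq\lambda$ with $|\lambda/\mu|=k$, maximize (respectively minimize) $\sum_{b\in\lambda/\mu}f(c(b))$ for an increasing $f$. For $T_{\lambda^{\downarrow}}$, the skew shape $\lambda/\mu_{\downarrow}$ consists of the last $k$ boxes in column reading order. These are all boxes in columns $>j_0$, together with a bottom segment of column $j_0$.

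\textbf{Upper bound.} I will use majorization of contents. Write the contents of $\lambda/\mu$ in decreasing order as $a_1\ge\cdots\ge a_k$, and those of $\lambda/\mu_\downarrow$ as $a^\downarrow_1\ge\cdots\ge a^\downarrow_k$. I will show $a_r\le a^\downarrow_r$ for every $r$; monotonicity of $f$ then gives the claim termwise.

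To get this, I will use an exchange argument. Suppose $\lambda/\mu\neq\lambda/\mu_\downarrow$. Then some box $b\in\lambda/\mu$ lies in column $j<j_0$, or lies in column $j_0$ above the prescribed segment. At the same time some box $b'\in\lambda/\mu_\downarrow$ is not in $\lambda/\mu$, and $b'$ lies in a column $\ge j_0$, or at the bottom of column $j_0$.
\begin{itemize}
\item Choose $b$ as a removable-in-reverse box: the topmost box of $\lambda/\mu$ in its column, taken in the leftmost such column. Then $\mu\cup\{b\}$ is a partition.
\item Choose $b'$ as a box of $\mu$ that is a corner of $\mu\cup\{b\}$, in the rightmost column possible. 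Then $\mu':=(\mu\cup\{b\})\setminus\{b'\}$ is again a partition.
\end{itemize}
I then need to check that $c(b')\ge c(b)$. This should follow because $b'$ lies weakly right of $b$ and, in the same-column case, strictly below it. Consequently $D$ weakly increases. Each swap strictly increases the column-reading positions of the boxes, so iterating terminates at $\lambda/\mu_\downarrow$.

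\textbf{Lower bound.} I will deduce this by conjugation rather than repeating the argument. Transposing $\lambda\mapsto\lambda'$ sends $T_{\lambda^{\to}}$ to $T_{\lambda'^{\downarrow}}$ and negates every content. Applying the upper-bound statement to $\lambda'$ with the increasing function $c\mapsto -f(-c)$ gives
\[
\sum_{b\in\lambda/\mu}f(c(b))\ \ge\ \sum_{b\in\lambda/\mu_{\to}}f(c(b)).
\]
This is exactly why I plan to prove the upper bound for an arbitrary increasing $f$ rather than only for $f=[\cdot]_q$.

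\textbf{Main obstacle.} The delicate step is the exchange. I must choose $b,b'$ so that both partition conditions hold simultaneously and $c(b')\ge c(b)$. The difficulty arises when $b'$ sits in a column $j'>j$ but in a much lower row, which could make its content smaller. If the exchange becomes messy, I will fall back on a direct counting argument. For each threshold $t$, I would show that the number of boxes of $\lambda/\mu$ with content $\ge t$ is at most the corresponding number for $\lambda/\mu_\downarrow$, since the latter greedily takes the rightmost columns. This comparison is equivalent to the majorization of the sorted contents.
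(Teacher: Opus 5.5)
Your route differs from the paper's. The paper does not redo the combinatorics. It takes the explicit sequence of entry swaps from the $q=1$ proof (Lemma~4.1 of \cite{arfaee2025shuffling}) and observes that, since $a\mapsto[a]_q$ is increasing, a swap that never decreases $c_b-c_a$ also never decreases $[c_b]_q-[c_a]_q$. You instead give a self-contained exchange argument for an arbitrary increasing $f$. This has a real payoff: the lower bound then follows from the upper bound by conjugation. That step is correct: row reading of $\lambda$ is column reading of $\lambda'$, contents change sign, and $c\mapsto -f(-c)$ is increasing. Your choices of $b$ (top box of $\lambda/\mu$ in its leftmost column) and $b'$ (rightmost corner of $\mu\cup\{b\}$) are also the right ones.

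However, the central inequality $c(b')\ge c(b)$ is not established, and the reason you give is wrong. A box in the same column strictly below $b$ has strictly \emph{smaller} content, and ``weakly right'' alone implies nothing. You then leave this as the ``main obstacle'' and fall back on a threshold-counting claim. That claim is true, but you support it only heuristically (``greedily takes the rightmost columns'').

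The missing observation is short. Put $\nu=\mu\cup\{b\}$, so $b=(\nu'_j,j)$ is the bottom box of column $j$ of $\nu$; here $j\le j_0$, since the columns $>j_0$ hold fewer than $k$ boxes.
\begin{itemize}
\item First, $\nu_1>j$. Otherwise $\lambda/\mu$ contains every box in columns $>j$ together with $b$. If $j<j_0$ that is more than $k$ boxes. If $j=j_0$, the column-$j_0$ part of $\lambda/\mu$ is a bottom segment of the prescribed length, which forces $\lambda/\mu=\lambda/\mu_{\downarrow}$.
\item Hence the rightmost corner is $b'=(\nu'_{\nu_1},\nu_1)\neq b$.
\item Because the column lengths of $\nu$ weakly decrease, $\nu'_{\nu_1}\le \nu'_j$. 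So $b'$ is strictly right of $b$ and weakly above it, and
$c(b')=\nu_1-\nu'_{\nu_1}>j-\nu'_j=c(b)$.
\end{itemize}
In particular, the same-column case never occurs: a box of $\mu$ above $b$ in column $j$ is not a corner of $\nu$. The scenario in your obstacle paragraph, with $b'$ to the right but much lower, is excluded by the corner choice itself.

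With this filled in, each swap replaces one content by a strictly larger one, which gives your termwise majorization. Each swap also strictly increases the total column-reading position, so the iteration terminates at $\lambda/\mu_{\downarrow}$, and the whole argument goes through.
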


\begin{proof}
Exchanging entries $a<n-k+1\le b$ occupying boxes of contents $c_a,c_b$
changes the ordinary ($q=1$) diagonal index $D_S$ by $c_b-c_a$, and $D_S$
by $[c_b]_q-[c_a]_q$. Since $a\mapsto[a]_q$ is increasing, these two
differences always have the same sign. Hence every swap used in the proof of
Lemma 4.1 of \cite{arfaee2025shuffling} -- which never decreases $c_b-c_a$ -- also never decreases
$[c_b]_q-[c_a]_q$, so the same sequence of swaps transforms $S$ into
$T_{\lambda^{\downarrow}}$ without ever decreasing $D_S$, and likewise
into $T_{\lambda^{\to}}$ without ever increasing it.
\end{proof}

Lemma~\ref{lem:insertion} reduces the problem of bounding $\eig_q(S)$ over all
$S \in \mathrm{SYT}(\lambda)$ to the two extreme tableaux. We now turn that into explicit
bounds in terms of $\lambda_1$ and the number of rows of $\lambda$.

\begin{lemma}\label{lem:eigbounds}
Let $\lambda \vdash n$, let $S \in \mathrm{SYT}(\lambda)$, let $m$ denote the
number of rows of $\lambda$, let $j = n - \lambda_1$, and let
$1 \le k \le n-1$.
\begin{enumerate}
\item \(\displaystyle 
\bigl|\mathrm{eig}_q(S)\bigr|\ \le\ \,\frac{k}{q^j}.\)
\item \(\displaystyle \bigl|\mathrm{eig}_q(S)\bigr|\ \le\ \frac{n+1}{n} \frac{1}{q^{j}}\), if
$\lambda_1 > \frac{6n}{10}$ and $n$ is sufficiently
large.
\end{enumerate}
\end{lemma}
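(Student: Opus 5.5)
By Lemma~\ref{lem:insertion} it is enough to bound $\eig_q(T_{\lambda^{\downarrow}})$ from above and $\eig_q(T_{\lambda^{\to}})$ from below. In practice I will bound $D_S$ directly for an arbitrary $S$, since only crude information about the contents is needed. The approach is to bound the positive and negative $q$--contents separately, then compare with $\Sigma_k=\sum_{l=1}^k[n-l]_q$ using the elementary inequality
\[
\frac{[a]_q}{[b]_q}\le q^{a-b}\qquad (0\le a\le b).
\]
That inequality holds because $(q^a-1)q^{b-a}\le q^b-1$.

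\emph{Part 1, upper side.} Every content is at most $\lambda_1-1$. By monotonicity (Lemma~\ref{lem:qinteger-facts}(i)) this gives $D_S\le k[\lambda_1-1]_q$. Since $\Sigma_k\ge[n-1]_q$, we get
\[
\eig_q(S)\le\frac{1+kq[\lambda_1-1]_q}{[n]_q}=\frac{1+k([\lambda_1]_q-1)}{[n]_q}\le k\frac{[\lambda_1]_q}{[n]_q}\le kq^{\lambda_1-n}=kq^{-j}.
\]

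\emph{Part 1, lower side.} A negative content $-r$ contributes $[-r]_q=-q^{-r}[r]_q\in(-\tfrac1{q-1},0)$, so $D_S\ge -k/(q-1)$. Hence
\[
\eig_q(S)\ge \frac{1}{[n]_q}-\frac{kq}{(q-1)[n]_q}=\frac{1}{[n]_q}-\frac{kq}{q^n-1}.
\]
I then check that this is at least $-kq^{-j}$. This follows from $j\le n-1$: we have $kq/(q^n-1)-1/[n]_q\le kq^{1-n}\le kq^{-j}$, where the $1/[n]_q$ term absorbs the factor $q^n/(q^n-1)$ for $k\ge1$. This is a small routine inequality.

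\emph{Part 2.} Now $\lambda_1>6n/10$, so $j<4n/10$. Every box outside the first row has content at most $\lambda_2-2\le j-2$. The first-row boxes have the distinct contents $0,1,\dots,\lambda_1-1$. So among the $k$ entries $n-k+1,\dots,n$, the first-row ones contribute at most $\sum_{l=1}^{k}[\lambda_1-l]_q$ (interpreting $[\,\cdot\,]_q\le 0$ terms as harmless). The at most $\min(k,j)$ entries outside the first row contribute at most $j[j]_q$. Term by term, $[\lambda_1-l]_q/[n-l]_q\le q^{-j}$, so $\sum_l[\lambda_1-l]_q\le q^{-j}\Sigma_k$. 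This gives
\[
\eig_q(S)\le \frac{1}{[n]_q}+\frac{q[n-1]_q}{[n]_q}\Bigl(q^{-j}+\frac{j[j]_q}{\Sigma_k}\Bigr)
\le q^{-j}\Bigl(1+\frac{q^j-1}{[n]_q}\Bigr)+\frac{j[j]_q}{[n-1]_q}.
\]
The term $(q^j-1)/[n]_q$ is $O(q^{j-n})$, which is much smaller than $1/(2n)$. The last term is $O(jq^{j-n})$. Relative to $q^{-j}$ it is $O(nq^{2j-n})=O(nq^{-n/5})$, again much smaller than $1/(2n)$ for large $n$. So $\eig_q(S)\le\frac{n+1}{n}q^{-j}$. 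The lower side is handled exactly as in Part~1, but now $\Sigma_k$ appears in the denominator. The negative part is at most $\frac{q[n-1]_q}{[n]_q}\cdot\frac{k}{(q-1)\Sigma_k}$, and I will bound this by $O(kq^{-n}\cdot q/(q-1))$ using $\Sigma_k\ge[n-1]_q$. This is $O(nq^{-n})\le q^{-j}$, since $j<0.4n$.

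The step I expect to be the main obstacle is the bookkeeping in Part~2. Specifically, I need to control the contribution of boxes outside the first row without assuming any relation between $k$ and $j$. The remedy is the crude bound $j[j]_q$, paid for by the exponential gap $q^{2j-n}$ that the hypothesis $\lambda_1>6n/10$ provides.
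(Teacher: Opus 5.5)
Your Part~1 lower bound has a step that fails. After replacing each $[-r]_q$ by the crude bound $-\frac{1}{q-1}$, you assert $\frac{kq}{q^n-1}-\frac{1}{[n]_q}\le kq^{1-n}$. Clearing denominators, this inequality is equivalent to $k\le (q-1)q^{n-1}$, which does not hold automatically for fixed $q>1$ and small $n$; the claim that $1/[n]_q$ ``absorbs'' the factor is wrong even for $k=1$. For the one-column shape $\lambda=(1^n)$ you have $j=n-1$, so there is no slack between $kq^{1-n}$ and $kq^{-j}$, and your bound really does fall short. For example, with $q=\tfrac32$, $n=3$, $k=2$ it gives $\eig_q(S)\ge \tfrac{4}{19}-\tfrac{24}{19}=-\tfrac{20}{19}$, while the lemma needs $\eig_q(S)\ge-\tfrac89$. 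The true eigenvalue is $-\tfrac{4}{21}$. Part~1 is asserted for every $n$ (only Part~2 carries the ``$n$ sufficiently large'' proviso), so this is a genuine gap, though an easy one to close. The bound $|[-r]_q|<\frac{1}{q-1}$ throws away the factor $1-q^{-j}$. Instead, use that every content is at least $-(m-1)\ge -j$, so monotonicity gives $[-r]_q\ge[-j]_q=-q^{-j}[j]_q$. Then $-\eig_q(S)\le \frac{kq[j]_q}{q^{j}[n]_q}\le kq^{1-n}\le kq^{-j}$, by your own inequality $[j]_q/[n]_q\le q^{j-n}$. This is the paper's argument. Your version is valid once $(q-1)q^{n-1}\ge n-1$, which would be enough for Theorem~\ref{thm:upper}, but not for the lemma as stated.

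Your Part~2 is correct and takes a different route from the paper. The paper reduces via Lemma~\ref{lem:insertion} to $T_{\lambda^{\downarrow}}$ and splits into two cases. For $k\le\lambda_1-\lambda_2$, the top entries fill the end of row one and it gets the sharp bound $[\lambda_1]_q/[n]_q$. For $k>\lambda_1-\lambda_2$, it argues that the extremal shape is the two-row shape $(\lambda_1,n-\lambda_1)$ and computes that case explicitly. You instead bound $D_S$ for an arbitrary $S$. The top-$k$ entries in row one form a suffix of that row, so they are dominated term by term by $q^{-j}\Sigma_k$. (The sum should run over $l\le\min(k,\lambda_1)$ rather than literally $\sum_{l=1}^{k}$, but the conclusion is unaffected.) The at most $j$ entries below row one have content at most $\lambda_2-2\le j-2$, so they cost at most $j[j]_q$. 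This is negligible against $q^{-j}[n-1]_q$, since the ratio is $O(n\,q^{2j-n})$ and $2j-n<-n/5$. Your route avoids Lemma~\ref{lem:insertion} and the somewhat informal reduction to two-row shapes, and it makes the error term fully explicit. What it gives up is only the exact value in the paper's Case~1, which the $\frac{n+1}{n}$ slack makes irrelevant. Your crude negative-side bound is harmless in Part~2, because there $n$ may be taken large.
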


\begin{proof}
We use the abbreviation $\eig_q(S) = \frac{1}{[n]_q}+\frac{q[n-1]_q}{[n]_q}\cdot
\frac{D_S}{\Sigma_k}$ introduced above. Since $k \le n-1$, the $l=1$ summand of $\Sigma_k$
gives
\begin{equation}\label{eq:sigmalower}
  \Sigma_k \ \ge\ [n-1]_q .
\end{equation}

\emph{(1), positive part.} Every box of $\lambda$ has content at most
$\lambda_1 - 1$, so $D_S \le k[\lambda_1-1]_q$ and hence, by
\eqref{eq:sigmalower},
$$
   \frac{D_S}{\Sigma_k}  \le\ \frac{k[\lambda_1-1]_q}{[n-1]_q}.
$$
Since,
$$
  \ \mathrm{eig}_q(S)
  \ \le\ \frac{1 + qk[\lambda_1-1]_q}{[n]_q}
  \ \le\ \frac{k\bigl(1 + q[\lambda_1-1]_q\bigr)}{[n]_q}
  \ =\ \frac{k[\lambda_1]_q}{[n]_q}
  \ \le\ \frac{k}{q^j},
$$
using $[\lambda_1]_q = 1 + q[\lambda_1-1]_q$ gives us the upper bound.

\emph{(1), negative part.} Every box of $\lambda$ has content at least 
$-j$, so $-D_S \le -k[-j]_q=\frac{k[j]_q}{q^j} $ and hence, by
\eqref{eq:sigmalower},
$$
   \frac{-D_S}{\Sigma_k}  \le\ \frac{k[j]_q}{q^j[n-1]_q}.
$$
Since,
$$
  \ -\mathrm{eig}_q(S)
  \ \le\ \frac{-1 + \frac{qk}{q^j}[j]_q}{[n]_q}
  \ \le\ \frac{k q[j]_q}{q^j[n]_q}
  \ \le\ \frac{k}{q^{n-1}}
  \ \le\ \frac{k}{q^{j}},
$$
using $j \leq n-1$ gives us the lower bound.

\emph{(2), positive part.} By Lemma~\ref{lem:insertion}, it suffices to bound
$\mathrm{eig}_q\bigl(T_{\lambda^{\downarrow}}\bigr)$ from above. We consider
two cases, as in the proof of Lemma 4.5 of \cite{arfaee2025shuffling}.

\emph{Case 1: $k \le \lambda_1 - \lambda_2$.} The entries
$n-k+1, \ldots, n$ occupy the boxes
$(1,\lambda_1-k+1), \ldots, (1,\lambda_1)$ of $T_{\lambda^{\downarrow}}$, so
$$
  D_{T_{\lambda^{\downarrow}}} \;=\; \sum_{s=1}^{k}[\lambda_1 - s]_q .
$$
For $1 \le s$ and $\lambda_1 \le n$,
$$
  q^{\lambda_1-1} - q^{\lambda_1-s}\ \le\ q^{n-1} - q^{n-s}
  \qquad\Longleftrightarrow\qquad
  [\lambda_1-s]_q\,[n-1]_q\ \le\ [\lambda_1-1]_q\,[n-s]_q ,
$$
so that $D_{T_{\lambda^{\downarrow}}} \le
\frac{[\lambda_1-1]_q}{[n-1]_q}\,\Sigma_k$, and therefore
$$
  \mathrm{eig}_q\bigl(T_{\lambda^{\downarrow}}\bigr)
  \ \le\ \frac{1 + q[\lambda_1-1]_q}{[n]_q}
  \ =\ \frac{[\lambda_1]_q}{[n]_q}
  \ \le\ q^{-j}
  \ \le\ \frac{n+1}{n\,q^{\,j}} .
$$

\emph{Case 2: $k > \lambda_1 - \lambda_2$.} Moving a box of the skew shape
occupied by the entries $n-k+1,\ldots,n$ upwards or to the right increases its
content, and hence increases $D_{T_{\lambda^{\downarrow}}}$, since
$a \mapsto [a]_q$ is increasing. Arguing as in the case $q=1$, the maximum of
$\mathrm{eig}_q\bigl(T_{\lambda^{\downarrow}}\bigr)$ over shapes with first
row $\lambda_1$ is attained at $\lambda = (\lambda_1, n-\lambda_1)$. For this
shape, $n-k < 2(n-\lambda_1)$, so the entries $1, \ldots, n-k$ of
$T_{\lambda^{\downarrow}}$ occupy
$\mu = \bigl(\lceil\tfrac{n-k}{2}\rceil, \lfloor\tfrac{n-k}{2}\rfloor\bigr)$.
Using $\sum_{c=0}^{a-1}[c]_q = \frac{[a]_q - a}{q-1}$ and
$[c-1]_q = \frac{1}{q}\bigl([c]_q - 1\bigr)$, we compute
$$
  \frac{D_{T_{\lambda^{\downarrow}}}}{\Sigma_k}
  \;=\;
  \frac{[\lambda_1]_q - \bigl[\lceil\tfrac{n-k}{2}\rceil\bigr]_q
        + \tfrac{1}{q}\Bigl([n-\lambda_1]_q
        - \bigl[\lfloor\tfrac{n-k}{2}\rfloor\bigr]_q\Bigr) - k}
       {q^{\,n-k}[k]_q - k}.
$$
Since $\lambda_1 \ge \frac{6n}{10}$, we have
$k >  2\lambda_1 - n \ge \frac{n}{5}$, so for $n$
sufficiently large,
$$
  q^{\,n-k}[k]_q - k \;\ge\; \frac{q^{\,n}}{q-1}\bigl(1 - 2q^{\frac{n}{5}}\bigr),
  \qquad
  [\lambda_1]_q + \tfrac{1}{q}[n-\lambda_1]_q
  \;\le\; \frac{q^{\lambda_1}}{q-1}\bigl(1 + q^{\frac{n}{5}}\bigr),
$$
whence $\frac{D_{T_{\lambda^{\downarrow}}}}{\Sigma_k} \le
q^{-j}\bigl(1 + 4q^{\frac{n}{5}}\bigr)$. Since $q[n-1]_q < [n]_q$ and
$\frac{1}{[n]_q} \le q^{-j}\,q^{\,1-\frac{3n}{5}}$,
$$
  \mathrm{eig}_q\bigl(T_{\lambda^{\downarrow}}\bigr)
  \;\le\; q^{-j}\bigl(1 + 5q^{\frac{n}{5}}\bigr)
  \;\le\; \frac{n+1}{n\,q^{\,j}}
$$
for all $n$ sufficiently large depending on $q$.

\emph{(2), negative part.} As in the proof of the lower bound of (1),
$-D_S \le \frac{k[j]_q}{q^j}$. Since $k \le n$, together with \eqref{eq:sigmalower} this gives
$$
  \mathrm{eig}_q(S)
  \;\ge\; -\,\frac{n\,[\,j\,]_q}{[n]_q}
  \;\ge\; -\,n\,q^{\,j-n}
  \;\ge\; -\,\frac{n+1}{n\,q^{\,j}},
$$
the last inequality holding because $n^2 q^{\,2j} \le q^{\,n}$ for
$j \le \frac{2n}{5}$ and $n$ sufficiently large.


\end{proof}

We need a few classical estimates are necessary for bounding \eqref{eq:l2norm}. These upper bounds come from \cite{DiaconisRam2000} and \cite{Diaconis1981}.

\begin{lemma}\label{lem:bounds}
Let $d_\lambda$ and
$t_\lambda$ be as in Section~\ref{sec:tableaux} and
Definition~\ref{def:genericdegree}. Then
\begin{enumerate}
\item $\displaystyle
  \sum_{\substack{\lambda \,\vdash\, n \\ \lambda_1=n-j}} d_\lambda
  \;\le\; \frac{(en)^{j}}{\sqrt{j!}}\,;$
\item $\displaystyle
  \sum_{\substack{\lambda \,\vdash\, n \\ \lambda_1=n-j }} d_\lambda^{\,2}
  \;\le\; \frac{n^{2j}}{j!}\,;$
\item $\displaystyle
  t_\lambda \;\le\; \left(\frac{q^{\,n}}{q-1}\right)^{j} .$
\end{enumerate}
\end{lemma}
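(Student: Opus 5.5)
Parts (1) and (2) are the classical estimates of Diaconis and Shahshahani \cite{Diaconis1981}, and part (3) follows from the explicit formula for the generic degree in Definition~\ref{def:genericdegree}. The plan is to fix $j$ and strip off the first row of $\lambda$.

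For (1) and (2), write $\lambda=(n-j,\bar\lambda)$ with $\bar\lambda\vdash j$. A standard tableau of shape $\lambda$ is determined by the set of $j$ entries placed outside the first row together with a standard filling of the remaining shape $\bar\lambda$ by those entries in increasing order. This gives
\[
    d_\lambda\le\binom{n}{j}d_{\bar\lambda}.
\]
Summing the squares and using $\sum_{\bar\lambda\vdash j}d_{\bar\lambda}^2=j!$ yields
\[
    \sum_{\lambda_1=n-j}d_\lambda^2\le\binom nj^2 j!\le\frac{n^{2j}}{j!},
\]
which is (2). For (1), apply Cauchy--Schwarz to $\sum_{\bar\lambda}d_{\bar\lambda}$. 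The number of partitions $p(j)$ is crude, so instead use $\sum_{\bar\lambda\vdash j}d_{\bar\lambda}\le\sqrt{p(j)\,j!}$ together with $p(j)\le e^{j}$-type bounds. Alternatively, bound $\binom nj\sum_{\bar\lambda}d_{\bar\lambda}\le \frac{n^j}{j!}\sqrt{p(j)j!}$ and then use $\sqrt{p(j)}\le e^{j}$ to reach $\frac{(en)^j}{\sqrt{j!}}$. The only care needed here is the constant bookkeeping in this last step.

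For (3), start from the second expression for $t_\lambda$:
\[
    t_\lambda=q^{\binom n2-n(\lambda')}\frac{\prod_{l=1}^{n}(1-q^{-l})}{\prod_{b\in\lambda}(1-q^{-h_b})}.
\]
The exponent is treated by the bound of Diaconis--Ram quoted in the introduction. Since $n(\lambda')=\sum_i\binom{\lambda_i}2\ge\binom{n-j}2$, we get
\[
    \binom n2-n(\lambda')\le\binom n2-\binom{n-j}2=j(n-j)+\binom j2\le jn.
\]
For the ratio of products, pair the first-row boxes with the factors of the numerator. The hook lengths in row $1$ are distinct, so the factors $(1-q^{-h_b})^{-1}$ for $b$ in the first row cancel against distinct factors $1-q^{-l}$ of the numerator. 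The remaining numerator factors are at most $1$. The at most $j$ boxes outside the first row each contribute $(1-q^{-h_b})^{-1}\le(1-q^{-1})^{-1}=\frac{q}{q-1}$.

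Combining the two estimates gives
\[
    t_\lambda\le q^{j(n-j)+\binom j2}\left(\frac{q}{q-1}\right)^j.
\]
Since $j(n-j)+\binom j2+j\le jn$ for $j\ge1$ (the inequality $\binom j2+j\le j^2$ holds), this is at most $\left(\frac{q^n}{q-1}\right)^j$. The case $j=0$ is trivial, since then $t_{(n)}=1$.

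The main obstacle is the cancellation step in (3): one must check that distinct first-row hook lengths can be matched injectively to distinct numerator indices $l\le n$. This holds because every hook length is at most $n$, so the first-row hooks form a set of distinct integers in $\{1,\dots,n\}$. Part (1) is where the constants are easiest to get wrong.
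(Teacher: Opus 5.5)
Your proposal is correct and follows essentially the same route as the paper: you strip off the first row to get $d_\lambda\le\binom nj d_{\bar\lambda}$ for (1)--(2), and for (3) you bound $\binom n2-n(\lambda')$ by $\binom n2-\binom{n-j}{2}$, cancel the first-row hook factors against the numerator, and bound each of the $j$ remaining denominator factors by $\frac{q}{q-1}$. The differences are cosmetic: you justify the paper's unproved count $\sum_{\bar\lambda\vdash j}d_{\bar\lambda}\le e^{j}\sqrt{j!}$ via Cauchy--Schwarz and $p(j)\le e^{2j}$, and you cancel the first-row hooks exactly (they are distinct integers in $\{1,\dots,n\}$), whereas the paper compares them with $1,\dots,\lambda_1$ by monotonicity.
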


\begin{proof}
\emph{(1).} As in the proof of Lemma 7.2 of \cite{DiaconisRam2000}, a standard
tableau with first row of length $n-j$ is determined by the set of $j$ entries
lying outside its first row, together with their arrangement. There are
$\binom{n}{j}$ choices for the set, and the arrangement is a standard tableau
on $j$ boxes, of which there are at most $\sqrt{j!}\,e^{\,j}$. Hence
$$
  \sum_{\substack{\lambda \,\vdash\, n \\ \lambda_1 = n-j}} d_\lambda
  \;\le\; \binom{n}{j}\sqrt{j!}\,e^{\,j}
  \;\le\; \frac{n^{j}}{j!}\,\sqrt{j!}\,e^{\,j}
  \;=\; \frac{(en)^{j}}{\sqrt{j!}} .
$$
 
\emph{(2).} A standard tableau of shape $\lambda$ with $\lambda_1 = n-j$ is
determined by the set of $j$ entries lying below its first row, of which there
are $\binom{n}{j}$ choices, together with a standard tableau of the shape
$\mu = (\lambda_2,\lambda_3,\dots) \vdash j$ that they form; the first row is
then forced. Hence $d_\lambda \le \binom{n}{j}d_\mu$, and squaring and summing
over $\mu \vdash j$ gives
$$
  \sum_{\substack{\lambda \,\vdash\, n \\ \lambda_1 = n-j}} d_\lambda^{\,2}
  \;\le\; \binom{n}{j}^{2}\sum_{\mu \,\vdash\, j} d_\mu^{\,2}
  \;=\; \binom{n}{j}^{2} j!
  \;\le\; \frac{n^{2j}}{j!} ,
$$
using $\sum_{\mu\vdash j} d_\mu^{2} = j!$ and $\binom{n}{j}\le \frac{n^{j}}{j!}$.
 
\emph{(3).} Write $h_b$ for the hook length of the box $b \in \lambda$ and
recall that $n(\lambda) = \sum_i (i-1)\lambda_i$,
$n(\lambda') = \sum_i \binom{\lambda_i}{2}$, and
$\sum_{b \in \lambda} h_b = n(\lambda)+n(\lambda')+n$. Cancelling the $n$
factors of $q-1$ common to numerator and denominator and extracting the
leading powers of $q$,
$$
  t_\lambda
  \;=\; q^{\,n(\lambda)}\,
        \frac{\prod_{l=1}^{n}\bigl(q^{\,l}-1\bigr)}
             {\prod_{b\in\lambda}\bigl(q^{\,h_b}-1\bigr)}
  \;=\; q^{\binom{n}{2}-n(\lambda')}\,
        \frac{\prod_{l=1}^{n}\bigl(1-q^{-l}\bigr)}
             {\prod_{b\in\lambda}\bigl(1-q^{-h_b}\bigr)} .
$$
The hook lengths in the first row of $\lambda$ are $\lambda_1$ distinct
positive integers, so
$\prod_{b \in \text{row }1}\bigl(1-q^{-h_b}\bigr) \ge
 \prod_{c=1}^{\lambda_1}\bigl(1-q^{-c}\bigr)$; cancelling this against the
first $\lambda_1$ factors of the numerator leaves
$$
  t_\lambda
  \;\le\; q^{\binom{n}{2}-n(\lambda')}\,
          \frac{\prod_{l=\lambda_1+1}^{n}\bigl(1-q^{-l}\bigr)}
               {\prod_{b \notin \text{row }1}\bigl(1-q^{-h_b}\bigr)}
  \;\le\; q^{\binom{n}{2}-n(\lambda')}\bigl(1-q^{-1}\bigr)^{-j},
$$
since the numerator is at most $1$ and each of the $j$ boxes outside the first
row has $h_b \ge 1$. Now $n(\lambda') \ge \binom{\lambda_1}{2}$ and
$\binom{n}{2}-\binom{\lambda_1}{2} = nj-\binom{j+1}{2}$, so, using
$\bigl(1-q^{-1}\bigr)^{-j} = q^{\,j}(q-1)^{-j}$,
$$
  t_\lambda
  \;\le\; \frac{q^{\,nj-\binom{j+1}{2}+j}}{(q-1)^{j}}
  \;=\; \frac{q^{\,nj-\binom{j}{2}}}{(q-1)^{j}}
  \;\le\; \frac{q^{\,nj}}{(q-1)^{j}}
  \;=\; \left(\frac{q^{\,n}}{q-1}\right)^{j} .
$$
\end{proof}

We are now ready to prove the upper bound.

\begin{theorem}\label{thm:upper}
Let $q>1$ be fixed and let $1 \le k \le n-1$. As in \cite{DiaconisRam2000}, we
start the chain at the identity, which is the state with the smallest
$\pi$-probability. Let $c>0$ and

$$
  t \;=\; \tfrac12\bigl(n + \log_q n + c\bigr),
  \qquad
  A \;=\; \frac{e\,q^{-c}}{q-1} .
$$

Then, for $n$ large enough,

$$
  \Bigl\|\frac{P^{\,t}_{\mathrm{id}}}{\pi}-1\Bigr\|_2^{2}
  \;\le\;
  n\,q^{-\frac{8}{100}n^{2} + O(n\log n)}
  \;+\;
  e^{2}\sqrt{e^{2A^{2}}-1}\,.
$$

In particular,

$$
  \lim_{c\to\infty}\ \limsup_{n\to\infty}\
  \Bigl\|\frac{P^{\,t}_{\mathrm{id}}}{\pi}-1\Bigr\|_2 \;=\; 0,
$$

and hence $\frac12\bigl(n+\log_q n+c\bigr)$ steps are enough for the chain to
mix, both in $\ell^2$ and in total variation.
\end{theorem}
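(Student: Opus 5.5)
We start from part~(1) of Theorem~\ref{thm:DRexpansion} with $x=\mathrm{id}$, combined with \eqref{eq:charsum}. This writes the squared $\ell^2$ distance as
\[
  \sum_{\lambda\neq(n)} t_\lambda \sum_{S\in\mathrm{SYT}(\lambda)} \eig_q(S)^{2t}
  \;\le\; \sum_{j=1}^{n-1}\ \sum_{\lambda_1=n-j} t_\lambda\, d_\lambda \max_{S}\bigl|\eig_q(S)\bigr|^{2t}.
\]
Here $j=n-\lambda_1$, and $\lambda=(n)$ corresponds to $j=0$. We split the sum at $\lambda_1=\frac{6n}{10}$. For $\lambda_1>\frac{6n}{10}$ we use Lemma~\ref{lem:eigbounds}(2). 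For $\lambda_1\le\frac{6n}{10}$, that is $j\ge\frac{4n}{10}$, we use the crude bound of Lemma~\ref{lem:eigbounds}(1).

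\emph{Small-$\lambda_1$ part.} For $j\ge \frac{4n}{10}$, Lemma~\ref{lem:eigbounds}(1) gives $|\eig_q(S)|^{2t}\le k^{2t}q^{-2tj}\le n^{n+\log_q n+c}\,q^{-j(n+\log_q n+c)}$. Rather than the bound of Lemma~\ref{lem:bounds}(3), which would lose here, we use the exact exponent from its proof: $t_\lambda\le q^{nj-\binom{j}{2}}(q-1)^{-j}$. Together with $\sum d_\lambda\le\sqrt{n!}\cdot(\#\text{partitions})=q^{O(n\log n)}$, each term is at most
\[
  q^{\,nj-\binom j2 - nj + O(n\log n)} = q^{-\binom{j}{2}+O(n\log n)} \le q^{-\frac{8}{100}n^2+O(n\log n)},
\]
since $\binom{j}{2}\ge \frac{16}{200}n^2(1-o(1))$. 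Summing over at most $n$ values of $j$ gives the first term of the bound.

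\emph{Large-$\lambda_1$ part.} Here $|\eig_q(S)|^{2t}\le (1+\frac1n)^{2t}q^{-2tj}\le e^{2}q^{-j(n+\log_q n+c)}=e^2 (n q^{n+c})^{-j}$ for $n$ large, since $2t\le 2n$. Using Lemma~\ref{lem:bounds}(3) for $t_\lambda$ and Cauchy--Schwarz on $\sum_{\lambda_1=n-j} d_\lambda \le \bigl(p(j)\sum d_\lambda^2\bigr)^{1/2}$, we could bound the $j$-th term by $e^2 A^j/\sqrt{j!}$ times a harmless factor. To land exactly on $e^2\sqrt{e^{2A^2}-1}$, we instead apply Cauchy--Schwarz to the whole sum over $j\ge1$:
\[
  \sum_j a_j b_j \le \Bigl(\sum_j a_j^2\Bigr)^{1/2}\Bigl(\sum_j b_j^2\Bigr)^{1/2}.
\]
The first factor $\sum_j a_j^2$ is $\sum_{\lambda} d_\lambda^2/\bigl(\text{normalization}\bigr)$, controlled by Lemma~\ref{lem:bounds}(2) as $n^{2j}/j!$. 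The second collects $t_\lambda^2 q^{-4tj}$, which by Lemma~\ref{lem:bounds}(3) is $\le \bigl(q^{-c}/(n(q-1))\bigr)^{2j}$. Pairing the factors of $n$ gives $\sum_{j\ge1}(eA)^{2j}/j!$-type sums, i.e.\ $e^{2A^2}-1$ after adjusting constants; the factor $e$ from Lemma~\ref{lem:bounds}(1) is absorbed into $A$.

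The main obstacle is the bookkeeping in this large-$\lambda_1$ part. The $n^{j}$ growth of $d_\lambda$ must be cancelled exactly by the $n^{-j}$ coming from the $\log_q n$ term in $t$, and the $q^{nj}$ in $t_\lambda$ must be cancelled by $q^{-nj}$. Both cancellations have no slack, so the eigenvalue bound $(1+\frac1n)q^{-j}$ of Lemma~\ref{lem:eigbounds}(2) is essential, and $(1+\frac1n)^{2t}\le e^{2}$ is where the prefactor $e^2$ comes from. Finally, letting $n\to\infty$ kills the first term, and $c\to\infty$ sends $A\to0$ and hence $\sqrt{e^{2A^2}-1}\to0$. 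The total variation statement follows from \eqref{eq:tvl2}.
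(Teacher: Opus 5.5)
Your outline matches the paper's proof: the same expansion at $x=\mathrm{id}$ and the same split at $\lambda_1=\frac{6n}{10}$. In the small-$\lambda_1$ range you also make the paper's key observation, that the $-\binom{j}{2}$ from the proof of Lemma~\ref{lem:bounds}(3) is needed to beat $k^{2t}$; that part is fine. The gap is the last step of the large-$\lambda_1$ part. Suppose you apply Cauchy--Schwarz to the whole sum with $d_\lambda$ in one factor and $t_\lambda q^{-2tj}$ in the other. Then the $n^{2j}$ of Lemma~\ref{lem:bounds}(2) and the $n^{-2j}$ of $q^{-4tj}$ end up in different sums and cannot pair. Up to the factor $e^2$, the bound produced is
\[
  \Bigl(\sum_{1\le j<\frac{4n}{10}}\frac{n^{2j}}{j!}\Bigr)^{1/2}\Bigl(\sum_{j\ge1}p(j)\Bigl(\frac{B}{n}\Bigr)^{2j}\Bigr)^{1/2},\qquad B=\frac{q^{-c}}{q-1}.
\]
Its first factor is of order $n^{(\frac15+o(1))n}$, while the second is only about $\frac{B}{n}$, so the product blows up.

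Inserting compensating weights $n^{-j}\rho^{j}$ and $n^{j}\rho^{-j}$ fixes this and gives $e^2\bigl(e^{\rho^2}-1\bigr)^{1/2}\bigl(\sum_{j\ge1}p(j)(B/\rho)^{2j}\bigr)^{1/2}$. That does tend to $0$ as $c\to\infty$ for suitable $\rho$, but it is not the stated $e^2\sqrt{e^{2A^2}-1}$. The constants you invoke do not match either, since $\sum_{j\ge1}(eA)^{2j}/j!=e^{e^2A^2}-1$. Also, Lemma~\ref{lem:bounds}(1), whose factor $e$ you say is absorbed into $A$, plays no role in this version. As you note yourself, the two cancellations have no slack, so they must be done inside each $j$-term, before summing over $j$.

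The route you set aside is the right one. Your bound $\sum_{\lambda_1=n-j}d_\lambda\le\sqrt{p(j)}\,n^j/\sqrt{j!}$, together with $\sqrt{p(j)}\le 2^{j/2}\le e^j$, gives
\[
  \sum_{\lambda_1=n-j}t_\lambda\,\chi_\lambda\bigl(P^{2t}\bigr)\;\le\;e^{2}\,\frac{A^{j}}{\sqrt{j!}}.
\]
(The paper gets the same estimate from Lemma~\ref{lem:bounds}(1) directly.) Cauchy--Schwarz should then be applied only to this scalar series, with geometric weights. Writing $\frac{A^j}{\sqrt{j!}}=2^{-j/2}\cdot\frac{(\sqrt2 A)^j}{\sqrt{j!}}$,
\[
  \sum_{j\ge1}\frac{A^{j}}{\sqrt{j!}}\;\le\;\Bigl(\sum_{j\ge1}2^{-j}\Bigr)^{1/2}\Bigl(\sum_{j\ge1}\frac{(2A^{2})^{j}}{j!}\Bigr)^{1/2}\;=\;\sqrt{e^{2A^{2}}-1}.
\]
This is exactly how the paper obtains the second term. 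Because the sum starts at $j=1$, the bound tends to $0$ as $A\to0$, that is, as $c\to\infty$.
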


\begin{proof}
We use Proposition 4.8(a) of \cite{DiaconisRam2000} with $x=\mathrm{id}$. In
this case the factor $q^{-2\ell(x)}$ equals $1$. Since $P$ is diagonal on
$S^{\lambda}_q$ in the seminormal basis, we have
\begin{equation}\label{eq:master}
  \Bigl\|\frac{P^{\,t}_{\mathrm{id}}}{\pi}-1\Bigr\|_2^{2}
  \;=\; \sum_{\substack{\lambda \vdash n \\ \lambda \ne (n)}}
        t_\lambda\, \chi_\lambda\bigl(P^{2t}\bigr),
  \qquad
  \chi_\lambda\bigl(P^{2t}\bigr)
  \;=\; \sum_{S \in \mathrm{SYT}(\lambda)} \eig_q(S)^{2t}.
\end{equation}

Let $m$ be the number of rows of $\lambda$ and let $j = n-\lambda_1$. We
separate the sum in \eqref{eq:master} into the cases
$\lambda_1 \le \frac{6n}{10}$ and $\lambda_1 > \frac{6n}{10}$. We also use
that $\mathrm{SYT}(\lambda)$ has $d_\lambda$ elements, and therefore
$$\chi_\lambda(P^{2t}) \le d_\lambda \max_{S} |\eig_q(S)|^{2t}.$$

\emph{The case $\lambda_1 \le \frac{6n}{10}$.} Here
$j \ge \frac{4n}{10}$. Using the Lemma~\ref{lem:eigbounds} we know
\begin{equation}\label{eq:z1eig}
\bigl|\eig_q(S)\bigr| \;\le\; \frac{k}{q^{j}}
\qquad\text{for every } S \in \mathrm{SYT}(\lambda).
\end{equation}





From the proof of Lemma~\ref{lem:bounds}(3), we have
$t_\lambda \le q^{nj-\binom{j}{2}}(q-1)^{-j}$, while
Lemma~\ref{lem:bounds}(1) gives
$\sum_{\lambda_1=n-j} d_\lambda \le (en)^{j}$. We group the terms according to
$j$. Taking $\log_q$ of the resulting summand gives

$$
  \Bigl(nj-\binom{j}{2}\Bigr) - j\log_q(q-1) + j\log_q(en) + 2t\log_q(n) - 2tj .
$$

Since $2tj = nj + j\log_q n + cj$, the terms $nj$ and $j\log_q n$ cancel, so
we are left with

$$
  -\binom{j}{2} - cj + O(n\log n),
$$

where the error term contains $2t\log_q(n) = O(n\log n)$ and
$j\log_q\frac{e}{q-1} = O(n)$. Since $j \ge \frac{4n}{10}$, we have
$\binom{j}{2} \ge \frac{8}{100}n^{2}+O(n)$, and hence each summand is at most
$q^{-\frac{8}{100}n^{2}+O(n\log n)}$. There are at most $n$ possible values of
$j$, so this gives the first term in the bound, which tends to $0$ as
$n \to \infty$.

\emph{The case $\lambda_1 > \frac{6n}{10}$.} In this case
Lemma~\ref{lem:eigbounds}(2) gives
$\bigl|\eig_q(S)\bigr| \le \frac{n+1}{n}q^{-j}$ for every
$S \in \mathrm{SYT}(\lambda)$. We also use
$t_\lambda \le (\frac{q^{n}}{q-1})^{j}$ from Lemma~\ref{lem:bounds}(3), a bound
which depends on $\lambda$ only through $j$, together with
$\sum_{\lambda_1=n-j} d_\lambda \le \frac{(en)^{j}}{\sqrt{j!}}$ from
Lemma~\ref{lem:bounds}(1). It follows that

$$
  \sum_{\substack{\lambda_1 = n-j}}
  t_\lambda\,\chi_\lambda\bigl(P^{2t}\bigr)
  \;\le\;
  \Bigl(\frac{n+1}{n}\Bigr)^{2t}
  \frac{q^{\,nj}}{(q-1)^{j}}\cdot\frac{(en)^{j}}{\sqrt{j!}}\cdot q^{-2tj}.
$$

Since $2t = n+\log_q n + c$ we have $q^{-2tj} = q^{-nj}n^{-j}q^{-cj}$.
Thus $q^{-nj}$ cancels the $q^{nj}$ coming from $t_\lambda$ and $n^{-j}$
cancels the $n^{j}$ coming from the multiplicities, leaving

$$
  \Bigl(\frac{n+1}{n}\Bigr)^{2t}\frac{1}{\sqrt{j!}}
  \left(\frac{e\,q^{-c}}{q-1}\right)^{\!j}
  \;=\;
  \Bigl(\frac{n+1}{n}\Bigr)^{2t}\frac{A^{\,j}}{\sqrt{j!}} .
$$

Finally $\bigl(1+\frac1n\bigr)^{2t}\le e^{\frac{2t}{n}}\le e^{2}$ for $n$ large, and
$j$ ranges over $1 \le j < \frac{4n}{10}$. We can therefore bound the finite
sum by the corresponding infinite series. No restriction on $A$ is needed.
Indeed, by the Cauchy--Schwarz inequality, for any $0<s<1$,

$$
  \sum_{j\ge1}\frac{A^{\,j}}{\sqrt{j!}}
  \;=\; \sum_{j\ge1} s^{\,j}\cdot\frac{(\frac{A}{s})^{j}}{\sqrt{j!}}
  \;\le\; \Bigl(\sum_{j\ge1}s^{2j}\Bigr)^{\frac{1}{2}}
           \Bigl(\sum_{j\ge1}\frac{(\frac{A}{s})^{2j}}{j!}\Bigr)^{\frac{1}{2}}
  \;=\; \Bigl(\frac{s^{2}}{1-s^{2}}\Bigr)^{\frac{1}{2}}
        \bigl(e^{(\frac{A}{s})^2}-1\bigr)^{\frac{1}{2}},
$$

and $s=\frac{1}{\sqrt2}$ gives
$\sum_{j\ge1}\frac{A^{j}}{\sqrt{j!}} \le \bigl(e^{2A^{2}}-1\bigr)^{\frac{1}{2}}$. It is
important that the sum starts at $j=1$: the omitted term $j=0$ equals $1$, and
after removing it the bound tends to $0$ with $A$, hence as $c\to\infty$.
This gives the second term in the bound.

Combining the two cases completes the proof.
\end{proof}

The same argument also controls the $\pi$-average of the $\ell^2$ distances when the
starting state is allowed to vary. Proposition 4.8(b) of \cite{DiaconisRam2000} gives
the corresponding formula, with the generic degree $t_\lambda$ replaced by the
dimension $d_\lambda$. This changes the relevant time scale from
$\frac12\bigl(n+\log_q n\bigr)$ to $\log_q n$. This is natural from the point of
view of the stationary measure: under $\pi$, a typical permutation has Coxeter length
close to $\binom{n}{2}$, so a typical starting state is much closer to equilibrium than
the identity.

\begin{theorem}\label{thm:upperavg}
Let $q>1$ be fixed, let $1 \le k \le n-1$, let $c>0$ and set
$t = \log_q n + c$. Then

$$
  \lim_{n\to\infty}\;
  \sum_{x \in S_n}\pi(x)\Bigl\|\frac{P^{\,t}_{x}}{\pi}-1\Bigr\|_2^{2}
  \;=\;
  \lim_{n\to\infty}\;
  \sum_{\lambda \ne (n)} d_\lambda\,\chi_\lambda\bigl(P^{2t}\bigr)
  \;\le\; e^{\,q^{-2c}}-1 .
$$

\end{theorem}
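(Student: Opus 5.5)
The plan is to proceed exactly as in the proof of Theorem~\ref{thm:upper}, but starting from part~(2) of Theorem~\ref{thm:DRexpansion} instead of part~(1). That result gives the first equality in the statement directly:
\[
\sum_{x}\pi(x)\Bigl\|\frac{P^t_x}{\pi}-1\Bigr\|_2^2=\sum_{\lambda\ne(n)}d_\lambda\,\chi_\lambda(P^{2t}),
\qquad
\chi_\lambda(P^{2t})=\sum_{S\in\mathrm{SYT}(\lambda)}\eig_q(S)^{2t}.
\]
I then bound $\chi_\lambda(P^{2t})\le d_\lambda\max_S|\eig_q(S)|^{2t}$. After grouping by $j=n-\lambda_1\ge1$, the $j$-th group contributes at most $\bigl(\sum_{\lambda_1=n-j}d_\lambda^2\bigr)\max|\eig_q|^{2t}$. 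I control the first factor by Lemma~\ref{lem:bounds}(2), which gives at most $n^{2j}/j!$. The weight $t_\lambda$ no longer appears, so there is no $q^{nj}$ to absorb. This is why $t=\log_q n+c$ suffices: then $q^{-2tj}=n^{-2j}q^{-2cj}$, and the $n^{-2j}$ cancels the $n^{2j}$.

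\emph{Case $\lambda_1>\frac{6n}{10}$.} Lemma~\ref{lem:eigbounds}(2) gives $|\eig_q(S)|\le\frac{n+1}{n}q^{-j}$. Hence the $j$-th group is at most
\[
\Bigl(1+\tfrac1n\Bigr)^{2t}\frac{n^{2j}}{j!}\,n^{-2j}q^{-2cj}=\Bigl(1+\tfrac1n\Bigr)^{2t}\frac{(q^{-2c})^j}{j!}.
\]
Because $t=O(\log n)$, the prefactor satisfies $(1+\frac1n)^{2t}\le e^{2t/n}\to1$. Summing over $j\ge1$ gives at most $(1+o(1))\bigl(e^{q^{-2c}}-1\bigr)$. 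The sum starts at $j=1$ because $\lambda\ne(n)$.

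\emph{Case $\lambda_1\le\frac{6n}{10}$, so $j\ge\frac{4n}{10}$.} Here I use only the cruder bound $|\eig_q(S)|\le k q^{-j}\le n q^{-j}$ from Lemma~\ref{lem:eigbounds}(1). The $j$-th group is then at most
\[
\frac{n^{2j}}{j!}\,n^{2t}\,n^{-2j}q^{-2cj}\le \frac{n^{2t}}{j!}=\frac{q^{O((\log n)^2)}}{j!}.
\]
Since $j!\ge(4n/10)!=q^{\Omega(n\log n)}$, each group is $q^{-\Omega(n\log n)}$. There are at most $n$ values of $j$, so this case contributes $o(1)$.

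Adding the two cases and letting $n\to\infty$ gives the bound $e^{q^{-2c}}-1$; strictly speaking this is a bound on the $\limsup$. The step I expect to need the most care is the second case. Pairing the crude eigenvalue bound with the full sum $\sum_\lambda d_\lambda^2=n!$ fails, because it leaves a factor of order $n^{0.2n}$. The way through is to keep the per-$j$ bound $n^{2j}/j!$ from Lemma~\ref{lem:bounds}(2), so that $1/j!$ beats the quasi-polynomial factor $k^{2t}$. A secondary point is that the factor $(1+1/n)^{2t}$, which in Theorem~\ref{thm:upper} was only bounded by $e^2$, now tends to $1$. This is what makes the limiting constant exactly $e^{q^{-2c}}-1$.
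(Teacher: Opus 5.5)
Your proposal is correct and follows the paper's proof essentially step for step: the equality from Theorem~\ref{thm:DRexpansion}(2), the per-$j$ bound $n^{2j}/j!$ from Lemma~\ref{lem:bounds}(2), the split at $\lambda_1=\frac{6n}{10}$ using Lemma~\ref{lem:eigbounds}(1) and (2), and the observation that $(1+\frac1n)^{2t}\to1$ because $t=O(\log n)$. The differences are cosmetic: for $\lambda_1\le\frac{6n}{10}$ you bound each of the at most $n$ groups by $n^{2t}/j!$ rather than summing the series to $e^{q^{-2c}}$, and you correctly note that the argument controls only the $\limsup$.
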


\begin{proof}
By Proposition 4.8(b) of \cite{DiaconisRam2000}, the left-hand side is
$\sum_{\lambda \ne (n)} d_\lambda \chi_\lambda(P^{2t})$. Also,
$\chi_\lambda(P^{2t}) \le d_\lambda \max_S |\eig_q(S)|^{2t}$ because
$\mathrm{SYT}(\lambda)$ has $d_\lambda$ elements. Write $j = n-\lambda_1$.
We use Lemma~\ref{lem:bounds}(2) in the form

$$\sum_{\lambda_1 = n-j} d_\lambda^{2} \le \frac{n^{2j}}{j!},
$$

and split the sum according to the size of $\lambda_1$. We will repeatedly use
$2t = 2\log_q n + 2c$, which gives
\begin{equation}\label{eq:avgcancel}
q^{-2tj} \;=\; n^{-2j}q^{-2cj}
\qquad\text{and hence}\qquad
\frac{n^{2j}}{j!}q^{-2tj} \;=\; \frac{q^{-2cj}}{j!} .
\end{equation}

\emph{The case $\lambda_1 \le \frac{6n}{10}$.} Here $j \ge \frac{4n}{10}$, and
$|\eig_q(S)| \le \frac{k}{q^{j}}$ by \eqref{eq:z1eig}. Using \eqref{eq:avgcancel}
and $k \le n$, we get

$$
  \sum_{\substack{\lambda \ne (n) \\ \lambda_1 \le \frac{6n}{10}}}
  d_\lambda\,\chi_\lambda\bigl(P^{2t}\bigr)
  \;\le\; \sum_{j \ge \frac{4n}{10}} k^{2t}\,\frac{n^{2j}}{j!}\,q^{-2tj}
  \;=\; k^{2t}\sum_{j \ge \frac{4n}{10}} \frac{q^{-2cj}}{j!}
  \;\le\; \frac{n^{2t}}{\lceil \frac{4n}{10}\rceil!}\,e^{\,q^{-2c}} .
$$

Now $2t = 2\log_q n + 2c$, so $n^{2t} = e^{O((\log n)^{2})}$, whereas
$\lceil \frac{4n}{10}\rceil! = e^{\Theta(n\log n)}$. Therefore this part of the sum tends to $0$.

\emph{The case $\lambda_1 > \frac{6n}{10}$.} Here $1 \le j < \frac{4n}{10}$,
and Lemma~\ref{lem:eigbounds}(2) gives $|\eig_q(S)| \le \frac{n+1}{n}q^{-j}$.
Using \eqref{eq:avgcancel}, we obtain

$$
  \sum_{\substack{\lambda \ne (n) \\ \lambda_1 > \frac{6n}{10}}}
  d_\lambda\,\chi_\lambda\bigl(P^{2t}\bigr)
  \;\le\; \Bigl(\frac{n+1}{n}\Bigr)^{2t}\sum_{j \ge 1}\frac{n^{2j}}{j!}q^{-2tj}
  \;=\; \Bigl(\frac{n+1}{n}\Bigr)^{2t}\sum_{j \ge 1}\frac{q^{-2cj}}{j!}
  \;=\; \Bigl(\frac{n+1}{n}\Bigr)^{2t}\bigl(e^{\,q^{-2c}}-1\bigr).
$$

Finally, $\bigl(1+\frac1n\bigr)^{2t} \le e^{\frac{2t}{n}} \to 1$, since
$t = \log_q n + c = o(n)$. Combining the two cases and letting $n \to \infty$
gives the result.
\end{proof}

The two previous bounds leave one natural question: what happens if the chain starts from a
fixed state other than the identity? For the walks considered here, the identity is
actually the worst starting state. The reason is that changing the starting state inserts
a contraction into the trace formula, and this can only decrease the trace. We first
record the elementary linear algebra fact that we need.

\begin{lemma}\label{lem:trace}
Let $A$ and $B$ be operators on a finite-dimensional inner product space, with
$B$ positive semidefinite and $\|A\| \le 1$, where $\|\cdot\|$ denotes the
operator norm. Then

$$
  \tr\bigl(A^{*}BA\bigr) \;\le\; \tr(B).
$$

\end{lemma}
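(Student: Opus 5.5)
The plan is to use the cyclicity of the trace together with a factorization of the positive semidefinite operator $B$. Since $B\succeq 0$, it has a positive semidefinite square root $B^{1/2}$ with $B=B^{1/2}B^{1/2}$. Then
\[
  \tr\bigl(A^{*}BA\bigr)=\tr\bigl(A^{*}B^{1/2}B^{1/2}A\bigr)=\tr\bigl((B^{1/2}A)^{*}(B^{1/2}A)\bigr)=\tr\bigl(B^{1/2}AA^{*}B^{1/2}\bigr),
\]
where the last equality is $\tr(X^*X)=\tr(XX^*)$ applied to $X=B^{1/2}A$.

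Next, I would use the operator inequality $AA^{*}\preceq I$. This is equivalent to $\|A^{*}\|\le1$, which holds because $\|A^*\|=\|A\|\le 1$. Conjugation by $B^{1/2}$ preserves the Loewner order, since for any vector $v$,
\[
  \langle B^{1/2}AA^{*}B^{1/2}v,v\rangle=\|A^{*}B^{1/2}v\|^{2}\le\|B^{1/2}v\|^{2}=\langle Bv,v\rangle .
\]
Hence $B-B^{1/2}AA^{*}B^{1/2}\succeq0$, and the trace of a positive semidefinite operator is nonnegative. This gives $\tr(B^{1/2}AA^*B^{1/2})\le\tr(B)$.

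Equivalently, one can avoid square roots by taking an orthonormal eigenbasis $\{e_i\}$ of $B$ with eigenvalues $\beta_i\ge0$. Then $\tr(A^*BA)=\sum_i\beta_i\|A^*e_i\|^2\le\sum_i\beta_i=\tr(B)$, using $\|A^*e_i\|\le1$. I would present this version, since it is one line. The only point needing care is that the relevant norm is that of $A^*$ rather than $A$, and these coincide. There is no real obstacle: the argument is routine finite-dimensional linear algebra.
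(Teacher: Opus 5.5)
Your argument is correct, and the eigenbasis version you intend to present is the paper's proof. The paper writes $B=\sum_i \mu_i u_iu_i^{*}$ with $\mu_i\ge 0$, uses cyclicity to get $\tr(A^{*}BA)=\sum_i\mu_i\|A^{*}u_i\|^2$, and bounds $\|A^{*}u_i\|\le 1$. It uses $\|A^{*}\|=\|A\|$ tacitly, whereas you state it explicitly; your square-root/Loewner-order variant is also valid and comes down to the same computation.
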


\begin{proof}
Diagonalize $B$ as
$B = \sum_i \mu_i u_iu_i^{*}$, where the vectors $\mu_i \ge 0$ and ${u_i}$ form an orthonormal basis. By the cyclicity of the trace,
$\tr\bigl(A^{*}u_iu_i^{*}A\bigr) = u_i^{*}AA^{*}u_i = |A^{*}u_i|^{2}$.
Since $|A^{*}u_i| \le \|A\||u_i| \le 1$, it follows that

\[
  \tr\bigl(A^{*}BA\bigr)
  \;=\; \sum_i \mu_i\,\bigl|A^{*}u_i\bigr|^{2}
  \;\le\; \sum_i \mu_i
  \;=\; \tr(B). \qedhere
\]

\end{proof}




\begin{proof}[Proof of Theorem~\ref{T-worst}]
Proposition 4.8(a) of \cite{DiaconisRam2000}, together with
$\ell(x) = \ell(x^{-1})$, gives

$$
  \Bigl\|\frac{X^{\,t}_{x}}{\pi}-1\Bigr\|_2^{2}
  \;=\; \sum_{\lambda \ne \mathbf{1}}
        t_\lambda\,\chi^{\lambda}_{H}
        \bigl(\widetilde{T}_{x}^{*}X^{2t}\widetilde{T}_{x}\bigr),
  \qquad \widetilde{T}_{w} = q^{-\ell(w)}T_{w} .
$$

Now write $x$ as a reduced word. Then $\widetilde{T}^{*}_x$ is a product of the
stochastic matrices $\widetilde{T}^*_{s_i}$ from
Theorem~\ref{thm:metropolis-hecke}. Hence it is itself stochastic with stationary
distribution $\pi$, and Jensen's inequality shows that it is a contraction on
$L^{2}(\pi)$. On the other hand, reversibility of $X$ implies that $X$ is
self-adjoint on $L^{2}(\pi)$, so
$X^{2t} = (X^{t})^{*}X^{t}$ is positive semidefinite. We can therefore apply
Lemma~\ref{lem:trace} and obtain
$\chi^{\lambda}_{H}(\widetilde{T}_{x}^{*}X^{2t}\widetilde{T}_{*}^{x})
\le \chi^{\lambda}_{H}(X^{2t})$ for every $\lambda$, with equality at
$x = \mathrm{id}$. Thus starting from any $x$ can only decrease the
$\ell^2$ distance. Taking $X = P$, the bound from
Theorem~\ref{thm:upper} at $x = \mathrm{id}$ therefore holds for every $x$.
\end{proof}

\subsection{Upper bound for the \texorpdfstring{$q$--random--to--random}{q-random-to-random} shuffle}\label{sec:r2r-upper}

The eigenvalues of $R$ were recorded in \eqref{eq:r2reig}. All the upper bound needs from
them is the following estimate.

\begin{lemma}\label{lem:r2rbounds}
Let $q \ge 1$, let $\lambda \vdash n$, let
$j = n-\lambda_1$, and let $\mu \subseteq \lambda$ be such that $\lambda/\mu$
is a horizontal strip. Then
$$
  0 \;\le\; \eig_q(\lambda,\mu) \;\le\; \frac{[\lambda_1]_q}{[n]_q}
    \;\le\; \frac{1}{q^{\,j}} ,
$$
where at $q=1$ the $q$-integers are read as $[a]_1 = a$, so that the bound
becomes $0 \le \eig_1(\lambda,\mu) \le \frac{\lambda_1}{n}$.
\end{lemma}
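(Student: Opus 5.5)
The plan is to prove the three inequalities separately. Nonnegativity will come from the operator structure of $R$. The middle bound will come from a monotonicity argument over horizontal strips. The last bound is a one-line $q$--integer identity.

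\emph{Nonnegativity.} Rather than working with the formula \eqref{eq:r2reig}, where contents in $\lambda/\mu$ may be negative, I use $R=\frac{1}{[n]_q^2}\mathcal B_n^*(q)\mathcal B_n(q)$. Here $\mathcal B_n^*(q)$ is the $L^2(\pi)$--adjoint of $\mathcal B_n(q)$, as used in Section~\ref{sec:metropolis}. So $R$ is of the form $A^*A$, hence positive semidefinite on $L^2(\pi)$. Its eigenvalues $\eig_q(\lambda,\mu)$ are therefore $\ge 0$. At $q=1$ the same argument applies verbatim.

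\emph{Middle inequality.} Put $s=n-|\mu|=|\lambda/\mu|$. Since $\lambda/\mu$ is a horizontal strip, its boxes lie in $s$ distinct columns among $1,\dots,\lambda_1$, so $s\le\lambda_1$. A box in column $j'$ has content at most $j'-1$. By monotonicity of $a\mapsto[a]_q$ (Lemma~\ref{lem:qinteger-facts}), this gives
\[
E_{\lambda/\mu}(q)\le f(\lambda_1,s):=q^n\sum_{c=\lambda_1-s}^{\lambda_1-1}[c]_q+\sum_{r=n-s+1}^{n}q^{n-r}[r]_q .
\]
It remains to show $f(a,s)\le[a]_q[n]_q$ for $s\le a\le n$. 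I will do this by downward induction on $a$.

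\emph{Base case $a=n$.} Here I claim $f(n,s)=[n]_q^2$. For $\lambda=(n)$ the horizontal strip $(n)/(n-s)$ is a valid index, and its eigenvalue must equal $1$ because $R$ is stochastic and the trivial module is one-dimensional. Alternatively, this is a direct telescoping check using $[r]_q=[r-1]_q+q^{r-1}$.

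\emph{Inductive step.} Going from $a$ to $a-1$ changes the bound by
\[
f(a,s)-f(a-1,s)=q^n\bigl([a-1]_q-[a-1-s]_q\bigr)=q^{\,n+a-1-s}[s]_q ,
\]
using additivity (iii). On the other side, $[a]_q[n]_q-[a-1]_q[n]_q=q^{a-1}[n]_q$. So it suffices that $q^{n-s}[s]_q\le[n]_q$. This holds because $[n]_q=[n-s]_q+q^{n-s}[s]_q$ and $[n-s]_q\ge0$. Since the right-hand side $[a]_q[n]_q$ drops by at least as much as $f$ at each step, the inequality $f(a,s)\le[a]_q[n]_q$ propagates from $a=n$ down to $a=s$.

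\emph{Last inequality.} By additivity, $[n]_q=[j]_q+q^j[\lambda_1]_q\ge q^j[\lambda_1]_q$, which gives $[\lambda_1]_q/[n]_q\le q^{-j}$.

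The $q=1$ case is the same computation with $[a]_1=a$, or follows by letting $q\to1$. The step I expect to need the most care is the base identity $f(n,s)=[n]_q^2$. If the stochasticity argument feels too indirect (it relies on the indexing of \cite{AxelrodFreedBraunerChiangComminsLang2024} including $\mu=(n-s)$ for $\lambda=(n)$), I will verify it by direct summation of the two geometric-type sums.
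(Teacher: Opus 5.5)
Your nonnegativity argument and your last inequality are fine and agree with the paper. So does the reduction $E_{\lambda/\mu}(q)\le f(\lambda_1,s)$. The gap is in the proof of $f(a,s)\le[a]_q[n]_q$, which fails on two counts.

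\textbf{The base case is false for $s<n$.} Summing the two geometric-type sums gives, for every $s\le a\le n$,
\[
  f(a,s)=[s]_q\,[n+a-s]_q ,\qquad\text{so}\qquad f(n,s)=[s]_q[2n-s]_q=[n]_q^2-q^{s}[n-s]_q^{2}.
\]
At $q=1$ this reads $f(n,s)=n^2-(n-s)^2$. The stochasticity argument breaks because $(n)/(n-s)$ with $0<s<n$ does not index an eigenvalue at all. The trivial module is one-dimensional, yet these values differ for different $s$. In \cite{AxelrodFreedBraunerChiangComminsLang2024}, as for Dieker--Saliola, the admissible $\mu$ must carry a desarrangement tableau, which for $\lambda=(n)$ leaves only $\mu=\emptyset$.

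\textbf{The induction runs the wrong way.} This is the more serious problem. Put $g(a)=[a]_q[n]_q-f(a,s)$. Your own difference computation gives
\[
  g(a)-g(a-1)=q^{a-1}\bigl([n]_q-q^{n-s}[s]_q\bigr)=q^{a-1}[n-s]_q\ge0 .
\]
So $g$ is nondecreasing in $a$, and a bound $g(n)\ge0$ cannot be pushed down to smaller $a$. Worse, if your claimed $g(n)=0$ were true, it would force $g(a)\le 0$ for $a<n$, which is the opposite of what you want.

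\textbf{The repair} is to anchor at $a=s$. There the content terms $[0]_q,\dots,[s-1]_q$ and the tail terms pair off exactly via
\[
  [i-1]_q+q^{-(n+1-i)}[n+1-i]_q=q^{-(n+1-i)}[n]_q .
\]
This gives $f(s,s)=[s]_q[n]_q$, i.e.\ $g(s)=0$, and monotonicity of $g$ then yields $g(\lambda_1)\ge0$. Equivalently, the closed form above gives $g(a)=q^{s}[a-s]_q[n-s]_q\ge0$ directly.

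That pairing identity is exactly the one the paper uses, but the paper needs no induction. It bounds the content sum by $\sum_{i=1}^{\lambda_1}[i-1]_q$ and pads the tail sum out to $\lambda_1$ terms, all added terms being nonnegative. In your notation this is $f(\lambda_1,s)\le f(\lambda_1,\lambda_1)=[\lambda_1]_q[n]_q$, in one step. Your sharper packing of the strip into the last $s$ columns is correct, but it buys nothing for the final estimate.
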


\begin{proof}
Non-negativity holds because $R = \mathcal{B}^{*}_n(q)\,\mathcal{B}_n(q)$.

For the upper bound, by \eqref{eq:r2reig} it suffices to prove $E_{\lambda/\mu}(q) \le [\lambda_1]_q[n]_q$. We bound
the two terms separately and then combine them.

Since $\lambda/\mu$ is a horizontal strip it contains at most one box in each
column, and $\lambda$ has $\lambda_1$ columns; a box in column $i$ has content
at most $i-1$. As $a \mapsto [a]_q$ is increasing,
\begin{equation}\label{eq:contentcap}
  \sum_{b\in\lambda/\mu}\bigl[c(b)\bigr]_q
  \;\le\; \sum_{i=1}^{\lambda_1}[\,i-1\,]_q .
\end{equation}
The same observation gives $|\lambda/\mu| \le \lambda_1$, that is,
$n - |\mu| \le \lambda_1$. Substituting $r = n+1-i$ in the second term and
extending the resulting range from $i \le n-|\mu|$ to $i \le \lambda_1$, which
only increases the sum since all terms are positive,
\begin{equation}\label{eq:tailcap}
  \sum_{r=|\mu|+1}^{n} q^{\,n-r}[r]_q
  \;=\; q^{\,n}\sum_{i=1}^{\,n-|\mu|}\frac{[\,n+1-i\,]_q}{q^{\,n+1-i}}
  \;\le\; q^{\,n}\sum_{i=1}^{\lambda_1}\frac{[\,n+1-i\,]_q}{q^{\,n+1-i}} .
\end{equation}
Adding \eqref{eq:contentcap} and \eqref{eq:tailcap}, the two sums now run over
the same range, and their $i$-th terms combine by the identity
$\frac{[b]_q + [a]_q}{q^{\,a}} = \frac{[a+b]_q}{q^{\,a}}$, applied with $b = i-1$ and
$a = n+1-i$, so that $a+b = n$ for every $i$:
$$
  E_{\lambda/\mu}(q)
  \;\le\; q^{\,n}\sum_{i=1}^{\lambda_1}
          \left([\,i-1\,]_q + \frac{[\,n+1-i\,]_q}{q^{\,n+1-i}}\right)
  \;=\; q^{\,n}\sum_{i=1}^{\lambda_1}\frac{[n]_q}{q^{\,n+1-i}}
  \;=\; [n]_q\sum_{i=1}^{\lambda_1} q^{\,i-1}
  \;=\; [n]_q\,[\lambda_1]_q .
$$
This proves $\eig_q(\lambda,\mu) \le \frac{[\lambda_1]_q}{[n]_q}$. The final
inequality is $[\lambda_1]_q \le q^{\lambda_1-n}[n]_q$, which after clearing
denominators reduces to $q^{\lambda_1} \le q^{\,n}$. Every step above uses
only that $a \mapsto [a]_q$ is increasing and that $q$-integers are positive,
both of which hold for all $q \ge 1$; at $q = 1$ the argument specializes to
the corresponding statement for the random--to--random shuffle of
\cite{Dieker2018}.
\end{proof}

  In fact, the eigenvalue bound for $R$ is stronger than either of the bounds used for $P$ in
Section~\ref{sec:kstar-upper}. Therefore the same argument applies to $R$ without any
additional estimates.

\begin{corollary}\label{cor:r2r}
Theorems~\ref{thm:upper} and~\ref{thm:upperavg}  hold with $P$ replaced by $R$.
In particular, by Theorem~\ref{T-worst}, $\frac12\bigl(n+\log_q n+c\bigr)$ steps suffice for the
$q$--random--to--random shuffle to mix, from any starting permutation.
\end{corollary}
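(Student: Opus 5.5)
The plan is to rerun the proofs of Theorems~\ref{thm:upper} and~\ref{thm:upperavg} verbatim with $P$ replaced by $R$. The only properties of $P$ those proofs use are the following. First, $P$ is reversible and acts by left multiplication by an element of $\mathcal H_q(S_n)$, so that Theorem~\ref{thm:DRexpansion} applies. Second, $\chi_\lambda(P^{2t})$ is a sum of $d_\lambda$ terms $\eig^{2t}$, each bounded by $d_\lambda\max|\eig|^{2t}$. Third, the eigenvalue bounds of Lemma~\ref{lem:eigbounds} hold, namely $|\eig|\le k q^{-j}$ when $\lambda_1\le \frac{6n}{10}$ and $|\eig|\le \frac{n+1}{n}q^{-j}$ when $\lambda_1>\frac{6n}{10}$.

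For $R$, reversibility was established via the $A^*A$ form in Section~\ref{sec:metropolis}, and $R$ is left multiplication by $\frac{1}{[n]_q^2}\mathcal B_n^*(q)\mathcal B_n(q)$.

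The character needs a little more care, since the eigenvalues of $R$ are indexed by pairs $(\lambda,\mu)$ rather than by tableaux. On $S^\lambda_q$, however, $R$ has exactly $d_\lambda$ eigenvalues counted with multiplicity, and these are the values $\eig_q(\lambda,\mu)$. I would use that $R$ is self-adjoint, so that it is diagonalizable on each $S^\lambda_q$, and that by \cite{AxelrodFreedBraunerChiangComminsLang2024} its spectrum on $S^\lambda_q$ is $\{\eig_q(\lambda,\mu)\}$ with multiplicities summing to $d_\lambda$. It then follows that $0\le\chi_\lambda(R^{2t})\le d_\lambda\max_\mu \eig_q(\lambda,\mu)^{2t}$, which is exactly the input \eqref{eq:master} needs. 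I expect this step to be the only real point to check, namely that the $\ell^2$ formula requires only the trace of $R^{2t}$ on each irreducible and not a tableau-indexed diagonal form. The trace formula is basis-free, so this should be fine.

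For the eigenvalue bounds, Lemma~\ref{lem:r2rbounds} gives $0\le \eig_q(\lambda,\mu)\le q^{-j}$ for every $\lambda$. This is at most $k q^{-j}$, since $k\ge1$, and at most $\frac{n+1}{n}q^{-j}$, so both cases of the argument go through without the restriction on $\lambda_1$ or on $n$ being large.

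With these substitutions, the case $\lambda_1\le\frac{6n}{10}$ of Theorem~\ref{thm:upper} gives $n q^{-\frac{8}{100}n^2+O(n\log n)}$, and in fact a better bound, since the factor $k^{2t}$ is replaced by $1$. The case $\lambda_1>\frac{6n}{10}$ again gives $e^2\sqrt{e^{2A^2}-1}$, using Lemma~\ref{lem:bounds} unchanged, since it concerns only $t_\lambda$ and $d_\lambda$. The same holds for Theorem~\ref{thm:upperavg}, which gives $e^{q^{-2c}}-1$.

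Finally, Theorem~\ref{T-worst} applies to $R$ because $R$ is reversible and given by a Hecke-algebra element. The bound at the identity is therefore the worst case, and $\frac12(n+\log_q n+c)$ steps suffice from every starting permutation, in $\ell^2$ and hence in total variation by \eqref{eq:tvl2}.
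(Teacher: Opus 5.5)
Your proposal is correct and follows the paper's proof. The paper likewise notes that the proofs of Theorems~\ref{thm:upper} and~\ref{thm:upperavg} use the eigenvalues only through the bounds $kq^{-j}$ and $\frac{n+1}{n}q^{-j}$, which are both dominated by the bound $0\le \eig_q(\lambda,\mu)\le q^{-j}$ of Lemma~\ref{lem:r2rbounds}, and that $R$ is self-adjoint because it has the form $a^{*}a$, so Theorem~\ref{T-worst} applies. Your extra check that $\chi_\lambda(R^{2t})$ is a basis-free trace, equal to a sum of $d_\lambda$ eigenvalue powers counted with multiplicity, is left implicit in the paper; it correctly shows that indexing the spectrum by pairs $(\lambda,\mu)$ rather than by tableaux is harmless.
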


\begin{proof}
In the proofs above, the eigenvalues of $P$ are used only through the bounds
$|\eig_q(S)| \le kq^{-j}$ and $|\eig_q(S)| \le \frac{n+1}{n}q^{-j}$.
By Lemma~\ref{lem:r2rbounds}, the eigenvalues of $R$ satisfy
$\eig_q(\lambda,\mu) \le q^{-j}$, so the same estimates apply, with a stronger
eigenvalue bound. Moreover, $R$ is self-adjoint since it is of the form
$a^{*}a$. All the other parts of the proofs are independent of the choice of
the chain, so the results follow for $R$ as well.
\end{proof}

\subsection{The short systematic scan}\label{sec:scan-upper}

Let $K = \widetilde{T}_{s_{n-1}}\cdots\widetilde{T}_{s_{2}}\widetilde{T}_{s_{1}}^{2}
\widetilde{T}_{s_{2}}\cdots\widetilde{T}_{s_{n-1}}$ be the short systematic
scan Metropolis chain on $S_n$ of \cite{DiaconisRam2000}. We improve the
coefficient of the logarithmic term in the upper bound of \cite{DiaconisRam2000}
from $1$ to $\frac12$.

\begin{lemma}\label{lem:scan}
Let $\lambda \vdash n$ with $\lambda \ne (n)$ and $j = n-\lambda_1$. Every
eigenvalue of $K$ on $S^{\lambda}_q$ satisfies
$$
  0 \;<\; \eig_q(S) \;\le\; \frac{1}{q^{\,j}} .
$$
\end{lemma}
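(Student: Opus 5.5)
The plan is to read the bound directly off the Diaconis--Ram formula \eqref{eq:scaneig}, which says that the eigenvalue of $K$ on the seminormal vector $v_S$ is
\[
\eig_q(S)=q^{\,c_n(S)-n+1},
\]
where $c_n(S)$ is the content of the box of $S$ containing $n$. This is a power of $q>0$, so the positivity $\eig_q(S)>0$ is immediate. The upper bound then comes down to bounding the content of the box holding $n$.

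For the upper bound we use the fact, already invoked in Lemma~\ref{lem:eigbounds}, that every box $(i,j')$ of $\lambda$ satisfies $c=j'-i\le j'-1\le\lambda_1-1$. Therefore
\[
c_n(S)-n+1\le \lambda_1-1-n+1=-(n-\lambda_1)=-j .
\]
Since $q>1$, the map $a\mapsto q^a$ is increasing, and we get $\eig_q(S)\le q^{-j}$.

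There is essentially no obstacle. The only care needed is the index convention: we should check that \eqref{eq:scaneig} is the correct translation of Theorem~7.5(a) of \cite{DiaconisRam2000}, namely $\theta^{\,n-1-c_n(S)}$ with $\theta=q^{-1}$, so that the exponent really is $c_n(S)-n+1$. As a sanity check, for $\lambda=(n)$ the box containing $n$ has content $n-1$ and the eigenvalue is $1$, consistent with the trivial representation. The hypothesis $\lambda\ne(n)$ only guarantees $j\ge1$, so the bound is nontrivial; it is not otherwise used.

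The bound is sharp: equality holds when $n$ sits at the end of the first row. This is possible exactly when that box is a removable corner, i.e.\ when $\lambda_1>\lambda_2$.
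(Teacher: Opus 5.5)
Your proof is correct and is the same as the paper's: positivity comes from $\eig_q(S)=q^{\,c_n(S)-n+1}$ being a power of $q$, and the bound $c_n(S)\le\lambda_1-1$ gives the exponent $\le -j$. Your added remark that equality holds exactly when $n$ occupies the box $(1,\lambda_1)$, which is possible iff $\lambda_1>\lambda_2$, is also correct.
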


\begin{proof}
By \eqref{eq:scaneig} the eigenvalues of $K$ on $S^{\lambda}_q$ are
$\eig_q(S) = q^{\,c_n(S)-n+1}$, which is positive. Every box of $\lambda$ has content at
most $\lambda_1-1$, so $c_n(S) \le \lambda_1-1$ and hence
$\eig_q(S) \le q^{\lambda_1-n} = q^{-j}$.
\end{proof}


\begin{proposition}\label{prop:k1scan}
The short systematic scan and the $q$--deformed $k$--star transposition shuffle at $k=1$ are both affine functions
of the single Jucys--Murphy element $J_n(q)$:
\[
  P\big|_{k=1} \;=\; \frac{I + q\,J_n(q)}{[n]_q},
  \qquad
  K \;=\; q^{\,1-n}\bigl(I + (q-1)\,J_n(q)\bigr).
\]
For $\lambda\vdash n$ and $S\in\mathrm{SYT}(\lambda)$, write $c=c_n(S)$.
The corresponding eigenvalues satisfy
\[
  \eig_q^{\,P|_{k=1}}(S)=\frac{[c+1]_q}{[n]_q},
  \qquad
  \eig_q^{\,K}(S)=q^{c+1-n}.
\]
\end{proposition}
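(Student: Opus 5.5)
The identity for $P|_{k=1}$ is immediate from \eqref{eq:Pdef}: with $k=1$ the sum $\sum_{l=1}^k[n-l]_q$ is just $[n-1]_q$, so the coefficient of $J_n(q)$ collapses to $q/[n]_q$. Its eigenvalue then follows from Theorem~\ref{T-EVHeckeCombined}, which gives $(1+q[c]_q)/[n]_q$, together with the additivity identity of Lemma~\ref{lem:qinteger-facts}, which gives $1+q[c]_q=[c+1]_q$. This identity holds for negative $c$ as well, since both sides equal $(q^{c+1}-1)/(q-1)$.

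For $K$, I will prove the algebraic identity in $\mathcal H_q(S_n)$ directly instead of relying on the spectrum. Write $L_m:=T_{m-1}\cdots T_2T_1^2T_2\cdots T_{m-1}$ for $m\ge2$ and $L_1:=1$. Then $K=q^{-2(n-1)}L_n$, since the word has $2(n-1)$ letters and $\widetilde T_i=q^{-1}T_i$. The plan is to show by induction on $m$ that
\[
L_m = I+(q-1)\,q^{m-1}J_m(q),
\]
which is the well-known relation between the multiplicative Jucys--Murphy elements and the additive ones $J_m(q)=\sum_{i<m}q^{i-m}T_{(i\,m)}$.

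\begin{itemize}
\item \emph{Base case $m=2$.} Here $L_2=T_1^2=(q-1)T_1+q$. The right-hand side is $I+(q-1)q\cdot q^{-1}T_1=1+(q-1)T_1$. These do not agree, so the normalization has to be checked carefully. The natural guess $L_m=q^{m-1}\bigl(I+(q-1)J_m(q)\bigr)$ does work at $m=2$: it gives $q+(q-1)T_1$. I will adopt this normalization.
\item \emph{Inductive step.} The recursion is $L_{m+1}=T_mL_mT_m$. Assuming the claim for $m$, this becomes $q^{m-1}\bigl(T_m^2+(q-1)T_mJ_m(q)T_m\bigr)$. For each $i<m$ we have $T_mT_{(i\,m)}T_m=T_{(i\,m+1)}$. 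Expanding $T_m^2=(q-1)T_m+q$ then gives
\[
q^{m}+(q-1)q^{m-1}\Bigl(T_m+\sum_{i<m}q^{i-m}T_{(i\,m+1)}\Bigr).
\]
The bracket equals $q\,J_{m+1}(q)$, because $J_{m+1}(q)=\sum_{i\le m}q^{i-m-1}T_{(i\,m+1)}$, so its $i=m$ term is $q^{-1}T_m$ and the $i<m$ terms pick up an extra $q^{-1}$. Hence $L_{m+1}=q^{m}\bigl(I+(q-1)J_{m+1}(q)\bigr)$, as required.
\end{itemize}

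Putting these together gives
\[
K=q^{-2(n-1)}q^{n-1}\bigl(I+(q-1)J_n(q)\bigr)=q^{1-n}\bigl(I+(q-1)J_n(q)\bigr).
\]
For the eigenvalue, act on $v_S$ using $J_n(q)v_S=[c]_qv_S$. Then $1+(q-1)[c]_q=q^c$, which yields $q^{c+1-n}$, in agreement with \eqref{eq:scaneig}.

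The main obstacle is the bookkeeping of powers of $q$ in the induction. In particular, I need to verify carefully that conjugating by $T_m$ turns $T_{(i\,m)}$ into $T_{(i\,m+1)}$ with no correction terms; this holds because the resulting word is reduced and matches the definition of $T_{(i\,m+1)}$.
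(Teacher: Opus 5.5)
Your proof is correct and is essentially the paper's argument. The paper also expands $K$ from the central factor $\widetilde T_1^{\,2}$ outward by repeated use of the quadratic relation, arriving at $K=\theta^{n-1}I+(1-\theta)\sum_{i=1}^{n-1}\theta^{i-1}\widetilde T_{(i\,n)}$; this is your induction $L_{m+1}=T_mL_mT_m$ with $T_mT_{(i\,m)}T_m=T_{(i\,m+1)}$, written in the normalized basis, and your treatment of $P|_{k=1}$ and the eigenvalue substitutions match the paper's. In a write-up, state the normalization $L_m=q^{m-1}\bigl(I+(q-1)J_m(q)\bigr)$ from the outset, rather than first announcing the guess that you then discard at $m=2$.
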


\begin{proof}
For $k=1$, the definition of $P$ gives
\[
  P\big|_{k=1}
  = \frac{1}{[n]_q}I
    + \frac{q[n-1]_q}{[n]_q[n-1]_q}J_n(q)
  = \frac{I+qJ_n(q)}{[n]_q}.
\]
For the short systematic scan, recall that $\theta=q^{-1}$ and
\[
  K=\widetilde T_{n-1}\cdots\widetilde T_2
    \widetilde T_1^{\,2}
    \widetilde T_2\cdots\widetilde T_{n-1}.
\]
Applying the quadratic relation
\[
  \widetilde T_i^{\,2}=(1-\theta)\widetilde T_i+\theta I
\]
successively, starting with the central factor $\widetilde T_1^{\,2}$,
we obtain
\[
\begin{aligned}
  K
  &= \theta^{n-1}I
     +(1-\theta)\sum_{i=1}^{n-1}
       \theta^{i-1}\widetilde T_{(i\,n)} \\
  &= q^{1-n}\left(
       I+(q-1)\sum_{i=1}^{n-1}
       q^{n-1-i}\widetilde T_{(i\,n)}
     \right) \\
  &= q^{1-n}\bigl(I+(q-1)J_n(q)\bigr),
\end{aligned}
\]
where the last equality is the definition of $J_n(q)$.

Thus $K$ and $P|_{k=1}$ are affine functions of $J_n(q)$, so they
commute and share its eigenvectors. Substituting the eigenvalue $[c]_q$
of $J_n(q)$ into these expressions and using
\[
  1+q[c]_q=[c+1]_q,
  \qquad
  1+(q-1)[c]_q=q^c,
\]
gives the stated eigenvalues.
\end{proof}

With Lemma~\ref{lem:scan} in hand the upper bound for $K$ needs no new work.

\begin{theorem}\label{thm:scan}
Let $q>1$ be fixed and let $c>0$. Started at the identity, and with
$$
  t \;=\; \tfrac12\bigl(n+\log_q n + c\bigr),
  \qquad
  A \;=\; \frac{e\,q^{-c}}{q-1},
$$
the short systematic scan satisfies, for every $n$,
$$
  \Bigl\|\frac{K^{\,t}_{\mathrm{id}}}{\pi}-1\Bigr\|_{2}^{2}
  \;\le\; \sum_{j\ge1}\frac{A^{\,j}}{\sqrt{j!}}
  \;\le\; \sqrt{e^{2A^{2}}-1}\, .
$$
In particular $\frac12\bigl(n+\log_q n+c\bigr)$ scans suffice to mix, in
$\ell^2$ and in total variation.
\end{theorem}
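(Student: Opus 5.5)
We would run the argument of Theorem~\ref{thm:upper} with $K$ in place of $P$, using Lemma~\ref{lem:scan} to control the eigenvalues. Because Lemma~\ref{lem:scan} already gives $0<\eig_q(S)\le q^{-j}$ for every $\lambda\ne(n)$ and every $S$, with no factor $k$ and no factor $\frac{n+1}{n}$, the split at $\lambda_1=\frac{6n}{10}$ is unnecessary. That is also why the bound holds for every $n$ rather than only for $n$ large.

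First, start from Proposition~4.8(a) of \cite{DiaconisRam2000} with $x=\mathrm{id}$. The chain $K$ is reversible, since it is a palindromic product (Lemma~\ref{lem:reversible}), and it is diagonal in the seminormal basis. Therefore
\[
\Bigl\|\frac{K^{t}_{\mathrm{id}}}{\pi}-1\Bigr\|_2^2=\sum_{\lambda\ne(n)}t_\lambda\sum_{S\in\mathrm{SYT}(\lambda)}\eig_q(S)^{2t}\le\sum_{j\ge1}q^{-2tj}\sum_{\lambda_1=n-j}t_\lambda d_\lambda .
\]
Next, insert Lemma~\ref{lem:bounds}(3), $t_\lambda\le (q^n/(q-1))^j$, and Lemma~\ref{lem:bounds}(1), $\sum_{\lambda_1=n-j}d_\lambda\le (en)^j/\sqrt{j!}$. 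These bounds hold for all $j$ and all $n$. With $2t=n+\log_q n+c$ we have $q^{-2tj}=q^{-nj}n^{-j}q^{-cj}$. The factor $q^{-nj}$ cancels $q^{nj}$ and $n^{-j}$ cancels $n^j$, so the $j$-th summand is at most $A^j/\sqrt{j!}$ with $A=eq^{-c}/(q-1)$. This gives the first inequality.

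For the second inequality, apply Cauchy--Schwarz with weights $s^j$, $s=1/\sqrt2$, exactly as in the proof of Theorem~\ref{thm:upper}:
\[
\sum_{j\ge1}\frac{A^j}{\sqrt{j!}}\le\Bigl(\sum_{j\ge1}2^{-j}\Bigr)^{1/2}\Bigl(\sum_{j\ge1}\frac{(2A^2)^j}{j!}\Bigr)^{1/2}=\sqrt{e^{2A^2}-1}.
\]
This tends to $0$ as $c\to\infty$. Total variation then follows from \eqref{eq:tvl2}, and Theorem~\ref{T-worst} extends the bound to every starting state.

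One point needs checking: if $t$ is not an integer, interpret it as $\lceil t\rceil$. Rounding up only increases $2t$, and since $0<\eig_q(S)\le q^{-j}\le1$, a larger exponent only decreases each $\eig_q(S)^{2t}$. Beyond that there is no real obstacle. The only care needed is that the bounds used are uniform in $n$, namely the full-range Lemma~\ref{lem:bounds}(1) and (3), and that the sum starts at $j=1$.
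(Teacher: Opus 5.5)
Your proposal is correct and follows the paper's proof. The paper likewise reruns the argument of Theorem~\ref{thm:upper} using only the sharper bound $0<\eig_q(S)\le q^{-j}$ from Lemma~\ref{lem:scan}, without the split at $\lambda_1=\frac{6n}{10}$; Lemma~\ref{lem:bounds}(1),(3) and the Cauchy--Schwarz step then give the stated bound for every $n$, and your palindromic justification of reversibility (in place of the paper's $a^{*}a$ form) and your remark on rounding $t$ up to an integer are harmless additions.
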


\begin{proof}
By Lemma~\ref{lem:scan} the eigenvalues of $K$ satisfy
$|\eig_q(S)| \le q^{-j}$, which is stronger than both bounds used in the proof
of Theorem~\ref{thm:upper}, namely $k\,q^{-j}$ and $\frac{n+1}{n}q^{-j}$; moreover $K$ is
self-adjoint, being of the form $a^{*}a$ with
$a = \widetilde{T}_{s_{1}}\widetilde{T}_{s_{2}}\cdots\widetilde{T}_{s_{n-1}}$.
Since no other property of the chain enters, the proof of
Theorem~\ref{thm:upper} applies verbatim, and as in
Section~\ref{sec:r2r-upper} no splitting according to the size of
$\lambda_1$ is required.
\end{proof}


\begin{remark}\label{rem:gap}
All three shuffles share one feature: on every $S^{\lambda}_q$ with $\lambda \ne (n)$ their
eigenvalues are bounded away from $1$. For the scan and for $R$ this is exact,
$$
  0 \;\le\; \eig_q(S) \;\le\; \frac{1}{q^{\,j}} \;\le\; \frac1q ,
  \qquad j = n-\lambda_1 \ge 1 ,
$$
by Lemmas~\ref{lem:scan} and~\ref{lem:r2rbounds}; for $P$ the same holds with $\frac1q$
replaced by $\frac{n+1}{n}\cdot\frac1q$, by Lemma~\ref{lem:eigbounds}(2) and
\eqref{eq:z1eig}. So for fixed $q>1$ the spectral gap does not close as $n$ grows.

This is what the deformation buys. At $q=1$ the largest eigenvalue on $S^{(n-1,1)}$ is
$1-\Theta(\frac{1}{n})$, so of order $n\log n$ steps are needed before the dimensions $d_\lambda$
are overcome. At $q>1$ the obstruction is not the eigenvalues but the weights: by
Lemma~\ref{lem:bounds}(3) the generic degree is
$t_\lambda \le \bigl(\frac{q^{\,n}}{q-1}\bigr)^{j}$, the multiplicities contribute a further
$n^{j}$, and $q^{-2tj}$ kills both once $2t$ passes $n$ and $\log_q n$ respectively. That
is where the $\frac n2$ and the $\frac12\log_q n$ of Theorem~\ref{T-l2cutoff} come from.
\end{remark}




\section{\texorpdfstring{$\ell^{2}$}{l2} lower bound}\label{sec:l2lower}

We show that the time $\frac12\bigl(n+\log_q n\bigr)$ of
Theorem~\ref{thm:upper} cannot be improved.

In each case, a single partition suffices, namely $\lambda = (n-1,1)$. Its
generic degree is large, its $d_\lambda = n-1$ eigenvalues are all close to
$q^{-1}$, and this alone forces the $\ell^{2}$ distance to blow up.

Throughout this section, we fix $c>0$ and set
\begin{equation}\label{eq:t-def}
  t \;=\; \tfrac12\bigl(n + \log_q n - c\bigr),
  \qquad\text{so that}\qquad
  q^{-2t} \;=\; \frac{q^{\,c}}{n\,q^{\,n}} .
\end{equation}

Since $\lambda = (n-1,1)$ has $n(\lambda)=1$ and hook lengths
$n,\,1,\,2,\dots,n-2$ together with one additional box of hook length $1$,
\begin{equation}\label{eq:tn11}
  t_{(n-1,1)}
  \;=\; q\,[n-1]_q
  \;=\; \frac{q^{\,n}-q}{q-1}
  \;>\; q^{\,n-1}
  \qquad (n \ge 3).
\end{equation}

Combining \eqref{eq:t-def} and \eqref{eq:tn11}, with no asymptotics needed,
\begin{equation}\label{eq:master-lb}
  t_{(n-1,1)}\cdot \frac{n}{2}\cdot q^{-2t}
  \;>\; q^{\,n-1}\cdot\frac{n}{2}\cdot\frac{q^{\,c}}{n\,q^{\,n}}
  \;=\; \frac{q^{\,c-1}}{2} .
\end{equation}

Each of the three subsections below identifies at least $\frac n2$ eigenvalues
of the relevant chain on $S^{(n-1,1)}_q$ that are at least
$q^{-1}\bigl(1-o(\frac{1}{n})\bigr)$, and then applies \eqref{eq:master-lb}.

\bigskip

\subsection{The \texorpdfstring{$q$--deformed $k$--star}{q-deformed k-star} transposition shuffle}

\begin{lemma}\label{lem:kstar-eigs}
Let $\Sigma_k = \sum_{l=1}^{k}[n-l]_q$. On $S^{(n-1,1)}_q$ the operator $P$ has
exactly two eigenvalues,
$$
  \beta_1 \;=\; 1 - \frac{[n-1]_q}{\Sigma_k},
  \qquad\text{of multiplicity } k,
$$
$$
  \beta_2 \;=\; 1 - \frac{[n-1]_q}{[n]_q}\cdot
                    \frac{q^{\,n-k}[k]_q}{\Sigma_k},
  \qquad\text{of multiplicity } n-1-k .
$$
\end{lemma}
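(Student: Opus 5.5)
We will apply Theorem~\ref{T-EVHeckeCombined} with $\lambda=(n-1,1)$. A standard tableau $S$ of this shape is determined by the entry $a\in\{2,\dots,n\}$ in the box $(2,1)$. That box has content $-1$, and the remaining entries fill the first row in increasing order. The entry $m$ lies at $(1,m)$ if $m<a$ and at $(1,m-1)$ if $m>a$. Hence $c_m(S)=m-1$ for $m<a$, $c_a(S)=-1$, and $c_m(S)=m-2$ for $m>a$. The plan is to compute $D_S=\sum_{l=1}^k[c_{n+1-l}(S)]_q$ in two cases, according to whether $a$ lies among the last $k$ entries $\{n-k+1,\dots,n\}$.

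\emph{Case $a\ge n-k+1$ ($k$ tableaux).} The last $k$ entries occupy the boxes $(1,n-k+1),\dots,(1,n-1)$ together with $(2,1)$. Their contents are $n-k,\dots,n-2$ and $-1$. Using $[-1]_q=-q^{-1}$, we get
\[
D_S=\sum_{l=2}^{k}[n-l]_q-q^{-1}=\Sigma_k-[n-1]_q-q^{-1}.
\]
Substituting into \eqref{eq:EVHeckeSYT} gives
\[
\eig_q(S)=\frac{1}{[n]_q}+\frac{q[n-1]_q}{[n]_q}\Bigl(1-\frac{[n-1]_q+q^{-1}}{\Sigma_k}\Bigr).
\]
Since $q[n-1]_q+1=[n]_q$, this simplifies to $1-\frac{[n-1]_q}{[n]_q}\cdot\frac{q[n-1]_q+1}{\Sigma_k}=1-\frac{[n-1]_q}{\Sigma_k}=\beta_1$.

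\emph{Case $a\le n-k$ ($n-1-k$ tableaux).} The last $k$ entries occupy $(1,n-k),\dots,(1,n-1)$, with contents $n-k-1,\dots,n-2$. So
\[
D_S=\sum_{l=2}^{k+1}[n-l]_q=\Sigma_k-[n-1]_q+[n-k-1]_q.
\]
Then
\[
\eig_q(S)=\frac{1}{[n]_q}+\frac{q[n-1]_q}{[n]_q}-\frac{q[n-1]_q}{[n]_q}\cdot\frac{[n-1]_q-[n-k-1]_q}{\Sigma_k}.
\]
The first two terms sum to $1$. By the additivity identity of Lemma~\ref{lem:qinteger-facts}(iii), $[n-1]_q-[n-k-1]_q=q^{n-k-1}[k]_q$. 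Multiplying by $q$ gives $q^{n-k}[k]_q$, so the eigenvalue is $\beta_2$.

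The multiplicities $k$ and $n-1-k$ come from counting the admissible values of $a$. Their sum is $d_{(n-1,1)}=n-1$, consistent with \eqref{eq:branching}. Equivalently, they are $d_\mu d_{\lambda/\mu}$ for $\mu=(n-k-1,1)$ and $\mu=(n-k)$ respectively, since $d_{(n-k-1,1)}=n-k-1$ and $d_{(n-1,1)/(n-k)}=k$.

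The main obstacle is bookkeeping rather than ideas: getting the content shift for entries after $a$ right, and handling the negative content $[-1]_q=-q^{-1}$. Once both are correct, the two simplifications $1+q[n-1]_q=[n]_q$ and $[n-1]_q-[n-k-1]_q=q^{n-k-1}[k]_q$ close each case. One edge case needs a remark: when $k=n-1$, the second eigenvalue has multiplicity zero, so the statement "exactly two eigenvalues" should be read with $\beta_2$ absent in that case.
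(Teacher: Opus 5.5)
Your proof is correct and follows the same route as the paper: the lemma is read off from Theorem~\ref{T-EVHeckeCombined} at $\lambda=(n-1,1)$, with the two eigenvalues corresponding to $\mu=(n-k)$ and $\mu=(n-k-1,1)$, and your simplifications $1+q[n-1]_q=[n]_q$ and $[n-1]_q-[n-k-1]_q=q^{n-k-1}[k]_q$ are exactly what closes each case. One small slip: in your last paragraph ``respectively'' pairs the multiplicities the wrong way round, since $k$ goes with $\mu=(n-k)$ and $n-1-k$ goes with $\mu=(n-k-1,1)$, as your own ``since'' clause shows.
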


\begin{figure}[ht]
\centering
\includegraphics[width=420pt]{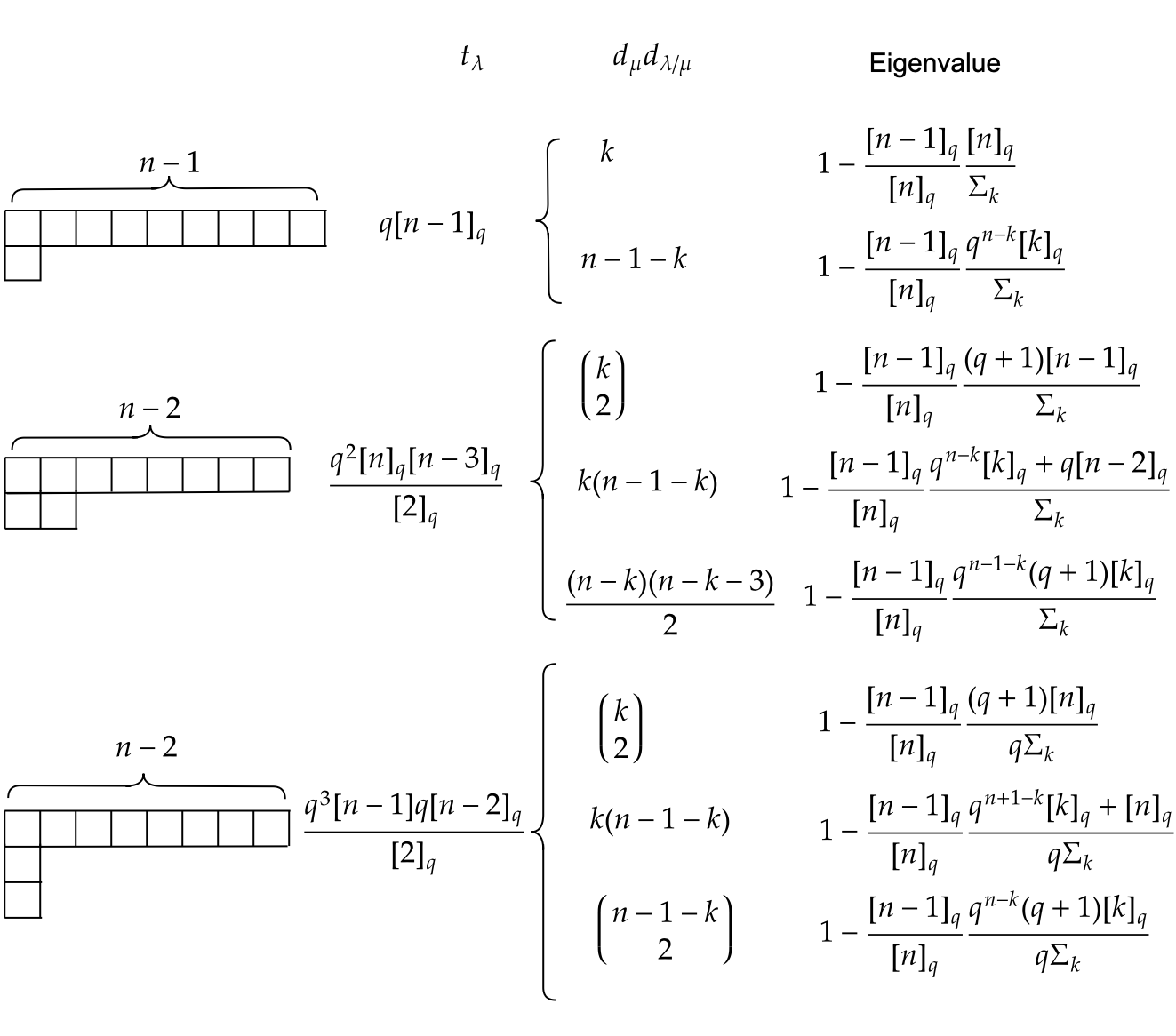}
\caption{Generic degree, multiplicity and eigenvalue of $P$ for the three partitions with
$\lambda_1 \ge n-2$, in the notation of Theorem~\ref{T-EVHeckeCombined}. The first row is
the case $\lambda = (n-1,1)$ of Lemma~\ref{lem:kstar-eigs}.}
\label{fig:eigtable}
\end{figure}

Figure~\ref{fig:eigtable} records these two eigenvalues together with the corresponding
data for the two partitions with $\lambda_1 = n-2$.

\begin{lemma}\label{lem:kstar-lb-eig}
For $n$ sufficiently large,
$$
  \beta_2 \;\ge\; \frac1q - \frac{k\,(q-1)}{q^{\,n-1}-1},
  \qquad\text{and, if } k > \tfrac n2, \qquad
  \beta_1 \;\ge\; \frac1q - \frac{q^{\,n-k}+kq}{q^{\,n}} .
$$
In particular, in either case the relevant eigenvalue is at least
$\frac1q\bigl(1-2q^{\,1-\frac{n}{2}}\bigr)$.
\end{lemma}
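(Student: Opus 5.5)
The plan is to work directly from the closed forms of $\beta_1$ and $\beta_2$ in Lemma~\ref{lem:kstar-eigs}, after first putting $\Sigma_k$ in closed form. Summing the geometric series gives
\[
  \sum_{l=1}^{k}q^{\,n-l}=\frac{q^{\,n}-q^{\,n-k}}{q-1}=q^{\,n-k}[k]_q,
  \qquad\text{hence}\qquad
  (q-1)\Sigma_k = q^{\,n-k}[k]_q - k .
\]
This identity is the main tool: it rewrites the awkward factor $q^{\,n-k}[k]_q$ in $\beta_2$ as $(q-1)\Sigma_k+k$. Throughout I also use $\Sigma_k\ge[n-1]_q$ from \eqref{eq:sigmalower}.

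\emph{Bound for $\beta_2$.} Substituting the identity gives
\[
  \beta_2 = 1-(q-1)\frac{[n-1]_q}{[n]_q}-\frac{k[n-1]_q}{[n]_q\Sigma_k}.
\]
\begin{itemize}
\item For the first two terms, $[n]_q-(q-1)[n-1]_q=[n]_q-q^{\,n-1}+1=[n-1]_q+1$. So they equal $([n-1]_q+1)/[n]_q$.
\item This is at least $1/q$, because $q[n-1]_q+q\ge q[n-1]_q+1=[n]_q$.
\item For the last term, use $[n-1]_q\le[n]_q$ and $\Sigma_k\ge[n-1]_q$. It is then at most $k/[n-1]_q=k(q-1)/(q^{\,n-1}-1)$.
\end{itemize}
This gives the first inequality, and it holds for all $n$.

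\emph{Bound for $\beta_1$ when $k>n/2$.} Write
\[
  \frac{[n-1]_q}{\Sigma_k}=\frac{(q-1)(q^{\,n-1}-1)}{q^{\,n}-q^{\,n-k}-k(q-1)}=:\frac{N}{D}.
\]
Subtracting $(1-\tfrac1q)D$ from $N$ gives $N-(1-\tfrac1q)D=(q-1)\bigl(q^{\,n-k-1}+k\tfrac{q-1}{q}-1\bigr)$. Hence
\[
  \beta_1=\frac1q-\frac{(q-1)\bigl(q^{\,n-k-1}+k(q-1)/q-1\bigr)}{D}.
\]
It remains to bound this error by $(q^{\,n-k}+kq)/q^{\,n}$.
\begin{itemize}
\item The numerator is at most $(1-\tfrac1q)\bigl(q^{\,n-k}+k(q-1)\bigr)$.
\item The denominator is $D=q^{\,n}\bigl(1-q^{-k}-k(q-1)q^{-n}\bigr)$, and $1-q^{-k}-k(q-1)q^{-n}\to1$ because $k>n/2$.
\item For large $n$ the factor $(1-\tfrac1q)<1$ absorbs the $1+o(1)$ loss from $D$. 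Also $k(q-1)\le kq$.
\end{itemize}
This step is the main (though mild) obstacle: it is the only place where "$n$ sufficiently large" and $k>n/2$ are genuinely needed, and the constants must be tracked so that the stated error $(q^{\,n-k}+kq)/q^{\,n}$ comes out exactly.

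\emph{The "in particular" statement.}
\begin{itemize}
\item For $\beta_2$ (the relevant case $k\le n/2$, where its multiplicity $n-1-k$ is about $n/2$): the error is $k(q-1)/(q^{\,n-1}-1)\le nq^{\,2-n}$, up to a constant. This is far smaller than $q^{-n/2}$ for large $n$.
\item For $\beta_1$ with $k>n/2$: $q^{\,n-k}/q^{\,n}=q^{-k}<q^{-n/2}$, and $kq/q^{\,n}\le nq^{\,1-n}=o(q^{-n/2})$.
\item In both cases the error is therefore at most $2q^{-n/2}=\tfrac1q\cdot2q^{\,1-n/2}$ for $n$ large. This gives the common lower bound $\tfrac1q\bigl(1-2q^{\,1-\frac n2}\bigr)$.
\end{itemize}
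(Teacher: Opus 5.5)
Your proof is correct and follows the paper's route: you use the identity $q^{\,n-k}[k]_q=(q-1)\Sigma_k+k$, rewrite $\beta_2$ with leading part $\frac{[n-1]_q+1}{[n]_q}\ge\frac1q$ (the paper's $\frac{q^{\,n-1}+q-2}{q^{\,n}-1}$), use the closed form of $\Sigma_k$ for $\beta_1$, and compare both error terms with $2q^{-n/2}$ exactly as the paper does. Your explicit formula for $\beta_1-\frac1q$, and the check that the denominator loss is absorbed once $q^{-k}+k(q-1)q^{-n}\le\frac1q$, fill in the step the paper only sketches as ``follows in the same way.''
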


\begin{proof}
Summing a geometric series gives $\sum_{l=1}^{k}q^{\,n-l} = q^{\,n-k}[k]_q$,
and therefore
\begin{equation}\label{eq:sigmaid}
  q^{\,n-k}[k]_q \;=\; (q-1)\,\Sigma_k + k .
\end{equation}

Substituting \eqref{eq:sigmaid} into the expression for $\beta_2$,
$$
  \beta_2
  \;=\; 1 - \frac{[n-1]_q}{[n]_q}\Bigl((q-1) + \frac{k}{\Sigma_k}\Bigr)
  \;=\; \underbrace{\frac{q^{\,n-1}+q-2}{q^{\,n}-1}}_{\textstyle \ge\ q^{-1}}
        \;-\; \frac{[n-1]_q}{[n]_q}\cdot\frac{k}{\Sigma_k} ,
$$
the indicated inequality being equivalent to $(q-1)^{2} \ge 0$. Since
$\frac{[n-1]_q}{[n]_q} \le 1$ and $\Sigma_k \ge [n-1]_q$, the subtracted term
is at most $\frac{k(q-1)}{q^{\,n-1}-1}$, which gives the first bound.

The second bound follows in the same way from
$$\Sigma_k = \frac{\bigl(q^{\,n}-q^{\,n-k}-k(q-1)\bigr)}{\big(q-1\big)^{2}}.$$

Finally, if $k \le \frac n2$ the first correction is at most
$\frac{n(q-1)}{q^{\,n-1}-1}$, and if $k > \frac n2$ the second is at most
$q^{-\frac{n}{2}}+kq^{\,1-n}$; both are below $\frac1q\cdot 2q^{\,1-\frac{n}{2}}$ for $n$
large.
\end{proof}

Both eigenvalues are therefore within $O\bigl(q^{\,1-\frac{n}{2}}\bigr)$ of $q^{-1}$, which is
what \eqref{eq:master-lb} needs.

\begin{theorem}\label{thm:kstar-lb}
Let $q>1$ be fixed and $1 \le k \le n-1$. Then, for $n$ sufficiently large,
$$
  \Bigl\|\frac{P^{\,t}_{\mathrm{id}}}{\pi}-1\Bigr\|_2^{2}
  \;\ge\; \frac{q^{\,c-1}}{4} .
$$
\end{theorem}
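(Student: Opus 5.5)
Starting from the identity, Theorem~\ref{thm:DRexpansion}(1) gives
\[
\Bigl\|\frac{P^{\,t}_{\mathrm{id}}}{\pi}-1\Bigr\|_2^{2}=\sum_{\lambda\ne(n)}t_\lambda\,\chi_\lambda\bigl(P^{2t}\bigr).
\]
The plan is to discard every term except $\lambda=(n-1,1)$. This is legitimate because each term is nonnegative. Indeed, $t_\lambda>0$, and since $P$ is diagonal in the seminormal basis, \eqref{eq:charsum} gives $\chi_\lambda(P^{2t})=\sum_S \eig_q(S)^{2t}$, which is a sum of even powers of real numbers. Within $\chi_{(n-1,1)}(P^{2t})$ we may likewise drop all but one eigenvalue class, since the rest contribute nonnegatively. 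Lemma~\ref{lem:kstar-eigs} splits the $n-1$ eigenvalues on $S^{(n-1,1)}_q$ into $\beta_1$, with multiplicity $k$, and $\beta_2$, with multiplicity $n-1-k$.

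The case split follows Lemma~\ref{lem:kstar-lb-eig}. If $k\le\frac n2$, keep $\beta_2$, whose multiplicity $n-1-k\ge\frac n2-1$. If $k>\frac n2$, keep $\beta_1$, whose multiplicity $k\ge\frac n2$. In both cases the kept eigenvalue $\beta$ satisfies $\beta\ge\frac1q\bigl(1-2q^{1-n/2}\bigr)>0$ for large $n$. We then get
\[
\Bigl\|\frac{P^{\,t}_{\mathrm{id}}}{\pi}-1\Bigr\|_2^{2}\ge t_{(n-1,1)}\cdot\Bigl(\frac n2-1\Bigr)\cdot q^{-2t}\bigl(1-2q^{1-n/2}\bigr)^{2t}.
\]

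The remaining step is to control the correction factors against \eqref{eq:master-lb}. Since $2t\le n+\log_q n$, we have $(1-2q^{1-n/2})^{2t}\ge 1-2(n+\log_q n)q^{1-n/2}\to1$, so this factor is exponentially close to $1$. Also $(\frac n2-1)/\frac n2\to1$. By \eqref{eq:master-lb}, $t_{(n-1,1)}\cdot\frac n2\cdot q^{-2t}>\frac{q^{c-1}}2$, so the right-hand side exceeds $\frac{q^{c-1}}2(1-o(1))\ge\frac{q^{c-1}}4$ for $n$ sufficiently large.

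The main point to handle carefully is the slight multiplicity deficit $\frac n2-1$ versus $\frac n2$ in the case $k\le n/2$. This deficit is absorbed by the factor-of-two slack between $\frac{q^{c-1}}2$ and $\frac{q^{c-1}}4$. A secondary issue is that $t$ may need to be an integer. If so, replace $t$ by $\lfloor t\rfloor$; this only increases $q^{-2t}$, or equivalently each $\beta^{2t}$ since $0<\beta<1$, so the bound only improves.
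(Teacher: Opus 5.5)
Your proposal is correct and follows the paper's proof essentially step for step. You keep only $\lambda=(n-1,1)$, retain $\beta_2$ or $\beta_1$ according to whether $k\le n/2$, apply Lemma~\ref{lem:kstar-lb-eig} and Bernoulli's inequality, and compare with \eqref{eq:master-lb}, with the loss $(n/2-1)/(n/2)$ and the $1-o(1)$ factor absorbed by the slack between $q^{c-1}/2$ and $q^{c-1}/4$. Your justification of nonnegativity (even powers of real eigenvalues) and your handling of non-integer $t$ by rounding down are sound details that the paper leaves implicit.
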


\begin{proof}
All terms of \eqref{eq:master} are non-negative, so we may keep only the
partition $\lambda = (n-1,1)$.

If $k \le \frac n2$ we retain only the eigenvalue $\beta_2$, whose
multiplicity is $n-1-k \ge \frac n2 - 1$; if $k > \frac n2$ we retain only
$\beta_1$, whose multiplicity is $k > \frac n2$. By
Lemma~\ref{lem:kstar-lb-eig}, in either case
$$
  \sum_{S \in \mathrm{SYT}(n-1,1)} \eig_q(S)^{2t}
  \;\ge\; \Bigl(\frac n2-1\Bigr)\,q^{-2t}\,
          \Bigl(1-2q^{\,1-\frac{n}{2}}\Bigr)^{2t} .
$$

Since $2t = n+\log_q n - c \le 2n$ for $n$ large, Bernoulli's inequality bounds the last
factor below by $1-4t\,q^{\,1-\frac{n}{2}} \ge 1-4n\,q^{\,1-\frac{n}{2}}$, which tends to $1$. Combining with \eqref{eq:master-lb} gives
$\frac{q^{\,c-1}}{2}\bigl(1+o(1)\bigr)$, which exceeds $\frac{q^{\,c-1}}{4}$
for $n$ large.
\end{proof}

\bigskip

\subsection{The \texorpdfstring{$q$--random--to--random}{q-random-to-random} shuffle}

\begin{lemma}\label{lem:r2r-eigs}
The partitions $\mu \subseteq (n-1,1)$ for which $(n-1,1)/\mu$ is a horizontal
strip are exactly $\mu = (j,1)$ with $1 \le j \le n-1$, each with
$d^{\mu} = 1$, and
$$
  \eig_q\bigl((n-1,1),(j,1)\bigr)
  \;=\; \frac{[\,n-1-j\,]_q\,[\,n+j\,]_q}{[n]_q^{2}} .
$$
Moreover
$$
  \eig_q\bigl((n-1,1),(j,1)\bigr) \;\ge\; \frac1q\Bigl(1-q^{\,j-n+1}\Bigr)
  \qquad\text{for all } 1 \le j \le n-1 .
$$
\end{lemma}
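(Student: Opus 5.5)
The plan has three steps: identify which skew shapes actually index eigenvalues on $S^{(n-1,1)}_q$, evaluate $E_{\lambda/\mu}(q)$ from \eqref{eq:r2reig} in closed form, and then compare with $q^{-1}$ by clearing denominators.

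\emph{Step 1: the admissible $\mu$.} A partition $\mu\subseteq(n-1,1)$ makes $(n-1,1)/\mu$ a horizontal strip exactly when it interlaces, $1\le\mu_1\le n-1$ and $\mu_2\le 1$. This leaves two families, $\mu=(j,1)$ and $\mu=(j)$, with $j\ge1$. The empty partition is excluded because it would leave two boxes in column~$1$.

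In the spectral description of \cite{AxelrodFreedBraunerChiangComminsLang2024}, as at $q=1$ in \cite{Dieker2018}, the eigenvalue indexed by $(\lambda,\mu)$ occurs with multiplicity $d^\mu$, the number of desarrangement tableaux of shape $\mu$. A one-row shape $(j)$ with $j\ge1$ has first ascent at $j$ only if $j$ is even and the tableau is not a single row, so $d^{(j)}=0$. The shape $(j,1)$ has exactly one desarrangement tableau, the one with $1,2$ in the first column, so $d^{(j,1)}=1$. As a consistency check, the $n-1$ choices $1\le j\le n-1$ account for all $d_{(n-1,1)}=n-1$ eigenvalues. This identification is the step I expect to need the most care, since it relies on quoting the multiplicity statement correctly rather than on a computation.

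\emph{Step 2: the closed form.} For $\mu=(j,1)$ the strip consists of the boxes $(1,j+1),\dots,(1,n-1)$, whose contents are $j,\dots,n-2$. First I will use $\sum_{c=j}^{n-2}[c]_q=\frac{[n-1]_q-[j]_q-(n-1-j)}{q-1}$. Next I will compute
\[
\sum_{r=j+2}^{n}q^{n-r}[r]_q=\frac{(n-1-j)q^n-[n-1-j]_q}{q-1}.
\]
Adding these as in \eqref{eq:r2reig}, the linear terms $(n-1-j)q^n$ cancel. Then Lemma~\ref{lem:qinteger-facts}(iii) gives $[n-1]_q-[j]_q=q^j[n-1-j]_q$. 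It follows that
\[
E=[n-1-j]_q\,\frac{q^{n+j}-1}{q-1}=[n-1-j]_q[n+j]_q .
\]
Dividing by $[n]_q^2$ gives the stated eigenvalue.

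\emph{Step 3: the lower bound.} The eigenvalue equals
\[
\frac{(q^{n-1-j}-1)(q^{n+j}-1)}{(q^n-1)^2}.
\]
I will factor $q^{n-1-j}-1=q^{n-1-j}\bigl(1-q^{j-n+1}\bigr)$. It then suffices to show $q^{n-1-j}(q^{n+j}-1)\ge q^{-1}(q^n-1)^2$. This is equivalent to $q^{2n}-q^{n-j}\ge q^{2n}-2q^n+1$, that is, $2q^n\ge q^{n-j}+1$. That holds since $q>1$ and $j\ge1$. For $j=n-1$ both sides of the claimed bound are $0$, so that case is trivially consistent.
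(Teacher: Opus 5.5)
Your proof is correct, and it covers more than the paper's. The paper proves only the final inequality, by expanding the $q$-integers and clearing denominators. It asserts without argument the list of admissible $\mu$, the multiplicities $d^{\mu}=1$, and the product formula $[n-1-j]_q[n+j]_q$. You derive the product formula by direct summation and justify the indexing via desarrangement multiplicities.

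Your Step~1 also catches that $\mu=(j)$ gives a horizontal strip too. So the statement's ``exactly $\mu=(j,1)$'' is only right once the shapes with $d^{\mu}=0$ are discarded, and your count $\sum_\mu d^{\mu}=n-1=d_{(n-1,1)}$ is a good check. One sentence there should be rewritten, since the phrase about a ``first ascent at $j$'' is garbled. The correct reason for $d^{(j)}=0$ is this: in the one-row tableau either $j=1$ and $1$ is the last entry, or $2$ sits in the same row as $1$. In both cases the first ascent is $1$, which is odd.

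For the inequality, factoring out $1-q^{j-n+1}$ before clearing denominators is the cleaner route. It shows that the difference of the two sides equals
\[
  \frac{\bigl(1-q^{\,j-n+1}\bigr)\bigl(2q^{n}-q^{\,n-j}-1\bigr)}{q\,(q^{n}-1)^{2}},
\]
which is visibly non-negative for $1\le j\le n-1$ and vanishes at $j=n-1$. The paper instead reduces to $q^{n-1}(2-q^{-j})\ge q^{j}+q^{-1}-q^{j-n}$. This does not appear to be equivalent to the original inequality: after multiplying by $q$, its right-hand side is smaller than the correct one by $q^{j+1}-q$. Your factored form is therefore also the more reliable certificate of the bound.
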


\begin{proof}
Expanding the $q$-integers and clearing the denominators, we obtain the asserted inequality
$$
  \frac{\bigl(q^{\,n-1-j}-1\bigr)\bigl(q^{\,n+j}-1\bigr)}
       {\bigl(q^{\,n}-1\bigr)^{2}}
  \;\ge\; \frac1q\bigl(1-q^{\,j-n+1}\bigr)
$$
reduces to $q^{\,n-1}\bigl(2-q^{-j}\bigr) \ge q^{\,j}+q^{-1}-q^{\,j-n}$, which
holds for every $1 \le j \le n-1$. No error term is required: the omitted
terms are positive.
\end{proof}

Restricting to $j \le \frac n2$ makes the error term $q^{\,j-n+1}$ negligible, and
\eqref{eq:master-lb} applies.

\begin{theorem}\label{thm:r2r-lb}
Let $q>1$ be fixed. Then, for $n$ sufficiently large,
$$
  \Bigl\|\frac{R^{\,t}_{\mathrm{id}}}{\pi}-1\Bigr\|_2^{2}
  \;\ge\; \frac{q^{\,c-1}}{4} .
$$
\end{theorem}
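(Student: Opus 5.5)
We follow the proof of Theorem~\ref{thm:kstar-lb} verbatim, with Lemma~\ref{lem:r2r-eigs} in place of Lemma~\ref{lem:kstar-lb-eig}. By Theorem~\ref{thm:DRexpansion}(1) with $x=\mathrm{id}$, the squared $\ell^2$ distance equals $\sum_{\lambda\ne(n)} t_\lambda\chi_\lambda(R^{2t})$. Every term is non-negative: $t_\lambda>0$, and by Lemma~\ref{lem:r2rbounds} the eigenvalues of $R$ are non-negative, so each power $\eig^{2t}$ is too. We therefore discard everything except $\lambda=(n-1,1)$.

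On $S^{(n-1,1)}_q$, Lemma~\ref{lem:r2r-eigs} lists the eigenvalues indexed by $\mu=(j,1)$, each with multiplicity $d_\mu d_{\lambda/\mu}=1$ (the $n-1$ values of $j$ account for all $d_{(n-1,1)}=n-1$ tableaux). We keep only $1\le j\le \lfloor n/2\rfloor$. This gives roughly $\frac n2$ eigenvalues; to be safe, take $\lfloor n/2\rfloor\ge \frac n2-\frac12$ terms, or just note the count is $\frac n2(1+o(1))$. For each such $j$, Lemma~\ref{lem:r2r-eigs} gives
\[
\eig_q\bigl((n-1,1),(j,1)\bigr)\ge \frac1q\bigl(1-q^{\,j-n+1}\bigr)\ge \frac1q\bigl(1-q^{\,1-\frac n2}\bigr).
\]
Hence
\[
\chi_{(n-1,1)}(R^{2t})\ge \Bigl\lfloor\frac n2\Bigr\rfloor q^{-2t}\bigl(1-q^{1-\frac n2}\bigr)^{2t}.
\]

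Now use $2t=n+\log_q n-c\le 2n$ together with Bernoulli's inequality:
\[
\bigl(1-q^{1-n/2}\bigr)^{2t}\ge 1-2n\,q^{1-n/2}\to1.
\]
Combining with \eqref{eq:tn11} and \eqref{eq:t-def} gives
\[
t_{(n-1,1)}\chi_{(n-1,1)}(R^{2t})> q^{n-1}\cdot\frac{n-1}{2}\cdot\frac{q^{c}}{nq^{n}}\,(1-o(1))=\frac{q^{c-1}}{2}(1-o(1)),
\]
which is at least $\frac{q^{c-1}}{4}$ for $n$ large.

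The only delicate step is justifying the dropping of the other representations, i.e.\ positivity of every summand. Here it is immediate from $R=[n]_q^{-2}\mathcal B_n^*\mathcal B_n$ being positive semidefinite. The floor in the multiplicity costs only a factor $1-O(1/n)$, which is absorbed in the $o(1)$. If $t$ is not an integer, we interpret it as $\lfloor t\rfloor$, which only helps the lower bound since the eigenvalues lie in $[0,1]$.
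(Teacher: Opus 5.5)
Your proposal is correct and is essentially the paper's proof: keep only $\lambda=(n-1,1)$ and the indices $1\le j\le n/2$ from Lemma~\ref{lem:r2r-eigs}, apply Bernoulli's inequality, and conclude with \eqref{eq:master-lb}; your handling of the count $\lfloor n/2\rfloor$ and of non-integer $t$ (via $\lfloor t\rfloor$) is in fact slightly more careful than the paper's. One small slip: the multiplicity of the $(\lambda,\mu)$ eigenvalue of $R$ is not $d_\mu d_{\lambda/\mu}$, which is the $k$--star formula and equals $j$ for $\mu=(j,1)$; it is the count $d^{\mu}=1$ given in Lemma~\ref{lem:r2r-eigs}, and your dimension count already forces multiplicity one, so the argument is unaffected.
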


\begin{proof}
Keep only $\lambda=(n-1,1)$ and the indices
$1\leq j\leq \frac{n}{2}$. By Lemma~\ref{lem:r2r-eigs}, these give at least $\frac{n}{2}$
eigenvalues bounded below by
\[
\frac1q\left(1-q^{1-\frac{n}{2}}\right).
\]
The same argument as in the proof of Theorem~\ref{thm:kstar-lb}, together with
\eqref{eq:master-lb}, gives
\[
\left\|
\frac{R_{\mathrm{id}}^t}{\pi}-1
\right\|_2^2
\geq
\frac{q^{c-1}}{4}
\]
for all sufficiently large $n$.
\end{proof}

\bigskip

\subsection{The short systematic scan}

\begin{lemma}\label{lem:scan-eigs}
Write $S_b$ for the standard tableau of shape $(n-1,1)$ with $S(2,1) = b$,
where $2 \le b \le n$. Then
$$
  \eig_q(S_b) \;=\; \frac1q \quad (b \ne n),
  \qquad\qquad
  \eig_q(S_n) \;=\; \frac{1}{q^{\,n}} .
$$
In particular exactly $n-2$ of the $n-1$ eigenvalues equal $q^{-1}$.
\end{lemma}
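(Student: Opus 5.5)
The plan is to read the eigenvalues straight off the Diaconis--Ram formula \eqref{eq:scaneig}, $\eig_q(S)=q^{\,c_n(S)-n+1}$. So for each tableau $S_b$ of shape $(n-1,1)$ we only need the content of the box containing $n$.

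First we describe the tableaux. A standard tableau of shape $(n-1,1)$ is determined by the entry $b=S(2,1)$ of the single box in row two. Standardness requires $b>S(1,1)=1$, and any $b\in\{2,\dots,n\}$ is allowed: the first row then holds $\{1,\dots,n\}\setminus\{b\}$ in increasing order. This gives the $d_{(n-1,1)}=n-1$ tableaux $S_2,\dots,S_n$.

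Next we locate $n$ in each case.

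\begin{itemize}
\item If $b\ne n$, then $n$ is the largest entry of the first row. It sits at the end of that row, in box $(1,n-1)$, whose content is $n-2$. Hence $\eig_q(S_b)=q^{(n-2)-n+1}=q^{-1}$.
\item If $b=n$, then $n$ sits in box $(2,1)$, whose content is $-1$. Hence $\eig_q(S_n)=q^{-1-n+1}=q^{-n}$.
\end{itemize}

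Counting, exactly the $n-2$ values $b=2,\dots,n-1$ give eigenvalue $q^{-1}$, and the one remaining value $b=n$ gives $q^{-n}$.

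No real obstacle is expected. The only point needing care is the normalization in \eqref{eq:scaneig}, where Diaconis--Ram's $\theta^{\,n-1-c_n(S)}$ is rewritten with $q=\theta^{-1}$. As a cross-check we can use Proposition~\ref{prop:k1scan}, which writes $K=q^{1-n}\bigl(I+(q-1)J_n(q)\bigr)$ with $J_n(q)v_S=[c_n(S)]_q v_S$. Together with the identity $1+(q-1)[c]_q=q^{c}$, which holds for negative $c$ as well by Lemma~\ref{lem:qinteger-facts}, this gives the same eigenvalues, in particular $q^{-n}$ for $c=-1$.
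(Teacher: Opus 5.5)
Your proposal is correct and matches the paper's proof: both read off $\eig_q(S)=q^{\,c_n(S)-n+1}$ from Theorem~7.5(a) of Diaconis--Ram and locate $n$ in box $(1,n-1)$, of content $n-2$, when $b\ne n$, and in box $(2,1)$, of content $-1$, when $b=n$. Your cross-check via Proposition~\ref{prop:k1scan} and the identity $1+(q-1)[c]_q=q^{c}$ is consistent but not needed.
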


\begin{proof}
By Theorem 7.5(a) of \cite{DiaconisRam2000} the eigenvalue attached to a
standard tableau $S$ is $$\theta^{\,n-1-c(S(n))} = q^{\,c(S(n))-n+1},$$ where
$S(n)$ is the box containing $n$.

If $b \ne n$ the entry $n$ lies at the end of the first row, in column $n-1$,
so $c(S_b(n)) = n-2$ and the eigenvalue is $q^{-1}$. If $b = n$ the entry $n$
lies in the box $(2,1)$, of content $-1$, and the eigenvalue is $q^{-n}$.
\end{proof}

Here the eigenvalue $q^{-1}$ is exact, so no error term arises at all.
\begin{theorem}\label{thm:scan-lb}
Let $q>1$ be fixed. Then, for $n \ge 4$,
$$
  \Bigl\|\frac{K^{\,t}_{\mathrm{id}}}{\pi}-1\Bigr\|_2^{2}
  \;\ge\; \frac{q^{\,c-1}}{2} .
$$
\end{theorem}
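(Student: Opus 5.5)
We argue exactly as for Theorems~\ref{thm:kstar-lb} and~\ref{thm:r2r-lb}, but the argument is simpler because the relevant eigenvalues are exact. Since $K$ is reversible and diagonal in the seminormal basis with positive eigenvalues (Lemma~\ref{lem:scan}), every term in the expansion
\[
  \Bigl\|\frac{K^{\,t}_{\mathrm{id}}}{\pi}-1\Bigr\|_2^{2}
  =\sum_{\lambda\ne(n)} t_\lambda \sum_{S\in\mathrm{SYT}(\lambda)}\eig_q(S)^{2t}
\]
from Theorem~\ref{thm:DRexpansion}(1) is non-negative. We may therefore discard every partition except $\lambda=(n-1,1)$. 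Within that partition we also discard the tableau $S_n$ and keep only the tableaux $S_b$ with $2\le b\le n-1$.

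By Lemma~\ref{lem:scan-eigs}, each of these $n-2$ tableaux contributes exactly $q^{-2t}$. This gives
\[
  \Bigl\|\frac{K^{\,t}_{\mathrm{id}}}{\pi}-1\Bigr\|_2^{2}
  \ge t_{(n-1,1)}\,(n-2)\,q^{-2t}.
\]
We then use \eqref{eq:tn11}, which gives $t_{(n-1,1)}>q^{n-1}$, and \eqref{eq:t-def}, which gives $q^{-2t}=q^{c}/(nq^{n})$. Together these yield the lower bound
\[
  \frac{n-2}{n}\,q^{c-1}.
\]
For $n\ge4$ we have $\frac{n-2}{n}\ge\frac12$, so the right-hand side is at least $\frac{q^{c-1}}{2}$, as claimed.

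The only point that needs care is that $t$ must be an integer number of scans. If $t$ is defined as a real number, we interpret it as $\lfloor t\rfloor$. This only makes $q^{-2t}$ larger, so the bound remains valid. Alternatively, we can note that \eqref{eq:master-lb} is stated for the exact value \eqref{eq:t-def} and use it directly. No asymptotic error terms arise anywhere, which is why the bound holds for every $n\ge4$ rather than only for $n$ sufficiently large.
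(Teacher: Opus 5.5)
Your proposal is correct and follows the paper's proof: keep only $\lambda=(n-1,1)$, retain the $n-2$ tableaux whose eigenvalue is exactly $q^{-1}$ by Lemma~\ref{lem:scan-eigs}, and combine \eqref{eq:tn11} with \eqref{eq:t-def}. The paper first bounds $n-2\ge \frac n2$ and then invokes \eqref{eq:master-lb}, whereas you compute $\frac{n-2}{n}\,q^{c-1}$ directly; these are the same argument.
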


\begin{proof}
By Lemma~\ref{lem:scan-eigs},
$$
  \sum_{b=2}^{n}\eig_q(S_b)^{2t}
  \;\ge\; (n-2)\,q^{-2t}
  \;\ge\; \frac n2\,q^{-2t}
  \qquad (n \ge 4),
$$
and \eqref{eq:master-lb} gives the claim. No error terms arise, the
eigenvalue $q^{-1}$ being exact.
\end{proof}

The three lower bounds above meet the upper bounds of
Section~\ref{sec:boundeigen} at the same time, which gives us the cutoff.

\begin{proof}[Proof of Theorem~\ref{T-l2cutoff}]
For the upper bound, Theorem~\ref{thm:upper}, Corollary~\ref{cor:r2r}, and Theorem~\ref{thm:scan} give, respectively for $P$, $R$, and $K$, that \[ t_+=\frac12\bigl(n+\log_q n+c\bigr) \] steps are sufficient for the $\ell^2$ distance from the identity to tend to zero as first $n\to\infty$ and then $c\to\infty$. For the lower bound, Theorems~\ref{thm:kstar-lb}, \ref{thm:r2r-lb} and \ref{thm:scan-lb} show that at \[ t_-=\frac12\bigl(n+\log_q n-c\bigr), \] for all sufficiently large $n$, \[ \left\| \frac{X_{\mathrm{id}}^{t_-}}{\pi}-1 \right\|_2^2 \ge \frac{q^{c-1}}{4} \] for each of the three chains. Therefore the $q$--deformed $k$--star transposition shuffle, the $q$--random--to--random shuffle, and the short systematic scan all exhibit $\ell^2$ cutoff at \[ \frac12\bigl(n+\log_q n\bigr) \] with a window of order $1$. \end{proof}







\section{The three shuffles at \texorpdfstring{$\theta = 0$}{theta}}\label{sec:theta0}

We now show that at $\theta = 0$ the three shuffles all become the short systematic scan.

Substituting $J_m(q) = \sum_{i=1}^{m-1}q^{\,m-1-i}\widetilde T_{(i\,m)}$ and
$m = n+1-l$ into \eqref{eq:Pdef}, the $q$--deformed $k$--star transposition shuffle reads
\begin{equation}\label{eq:Pbasis}
  P \;=\; \frac{1}{[n]_q}\,\widetilde T_{e}
     \;+\; \frac{q\,[n-1]_q}{[n]_q\,\Sigma_k}
       \sum_{l=1}^{k}\;\sum_{i=1}^{n-l} q^{\,n-l-i}\;
       \widetilde T_{(i,\,n+1-l)},
  \qquad
  \Sigma_k \;=\; \sum_{l=1}^{k}[n-l]_q ,
\end{equation}
a convex combination of basis elements: the coefficients sum to
$\frac{\bigl(1+q[n-1]_q\bigr)}{[n]_q} = 1$, using $\sum_{i=1}^{m-1}q^{\,m-1-i} = [m-1]_q$ and
$\sum_{l=1}^{k}[n-l]_q = \Sigma_k$.



\begin{theorem}\label{prop:theta0}
Multiply in $H(\infty)$, as in Definition~\ref{def:zerohecke}. Then for every
$1 \le k \le n-1$
$$
  P\big|_{\theta=0} \;=\; R\big|_{\theta=0} \;=\; K\big|_{\theta=0}
  \;=\; \widetilde T_{(1\,n)} .
$$
That is, at $\theta = 0$ both the $q$--deformed $k$--star transposition shuffle, for every $k$, and
the $q$--random--to--random shuffle become the short systematic scan.
\end{theorem}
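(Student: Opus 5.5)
The plan is to expand each of the three operators in the normalized basis $\{\widetilde T_w\}$, let $\theta\to0$ coefficient by coefficient, and then identify the surviving product in $H(\infty)$. Multiplication in $H(\infty)$ is the Demazure product of Definition~\ref{def:zerohecke}, and at $\theta=0$ the quadratic relation becomes $\widetilde T_i^{\,2}=\widetilde T_i$. Passing to the limit is legitimate because of the following observation. The structure constants of $H(q)$ in the basis $\widetilde T_w$ are polynomials in $\theta$; this is visible from Theorem~\ref{thm:metropolis-hecke}, where the only coefficients are $1$, $\theta$ and $1-\theta$. Hence if each factor of a product has coefficients converging as $\theta\to0$, the coefficients of the product converge to those of the product of the limits, computed in $H(\infty)$. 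This also fixes what "$\widetilde T_w$ at $\theta=0$" means: the product of the $\widetilde T_{s_i}$ along any reduced word, which is well defined by the braid relations.

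\emph{The scan.} I would use Proposition~\ref{prop:k1scan}, which gives
\[
K=\theta^{n-1}I+(1-\theta)\sum_{i=1}^{n-1}\theta^{i-1}\widetilde T_{(i\,n)}.
\]
Setting $\theta=0$ kills every term except $i=1$, so $K|_{\theta=0}=\widetilde T_{(1\,n)}$. As a check, the same answer comes directly from the word $s_{n-1}\cdots s_2s_1s_1s_2\cdots s_{n-1}$: the relation $\widetilde T_1^{\,2}=\widetilde T_1$ collapses it to the reduced word $s_{n-1}\cdots s_1\cdots s_{n-1}$ of $(1\,n)$.

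\emph{The $k$-star shuffle.} I would use the expansion \eqref{eq:Pbasis}.
\begin{itemize}
\item The coefficient of $\widetilde T_e$ is $1/[n]_q\to0$.
\item By Lemma~\ref{lem:qinteger-facts}\ref{it:qintlead}, $[n-1]_q\sim q^{n-2}$, $[n]_q\sim q^{n-1}$ and $\Sigma_k\sim q^{n-2}$. The $l=2$ term contributes $q^{n-3}=\theta q^{n-2}$, and the $l\ge3$ terms contribute even less. So the prefactor $q[n-1]_q/([n]_q\Sigma_k)$ is asymptotic to $q^{-(n-2)}$.
\item Multiplying the prefactor by $q^{n-l-i}$ gives coefficients of order $\theta^{\,l+i-2}$. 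The only one tending to $1$ is $(l,i)=(1,1)$, and all others tend to $0$.
\end{itemize}
Hence $P|_{\theta=0}=\widetilde T_{(1\,n)}$ for every $k$.

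\emph{Random-to-random.} In $\mathcal B^*_n(q)/[n]_q$ the coefficient $q^{n-i}/[n]_q$ tends to $1$ for $i=1$ and to $0$ otherwise. So the limit is $\widetilde T_{s_1s_2\cdots s_{n-1}}$, and similarly $\mathcal B_n(q)/[n]_q\to\widetilde T_{s_{n-1}\cdots s_1}$. By the continuity observation,
\[
R|_{\theta=0}=\widetilde T_{s_1}\cdots\widetilde T_{s_{n-1}}\,\widetilde T_{s_{n-1}}\cdots\widetilde T_{s_1}
\]
computed in $H(\infty)$. The middle factor $\widetilde T_{s_{n-1}}^{\,2}$ collapses to $\widetilde T_{s_{n-1}}$. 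The remaining word $s_1\cdots s_{n-1}s_{n-2}\cdots s_1$ has length $2n-3=\ell((1\,n))$ and equals $(1\,n)$, so it is reduced and the product is $\widetilde T_{(1\,n)}$.

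The main point needing care is the interchange of the limit $\theta\to0$ with the product in the case of $R$, together with the claim that the coefficients are genuinely continuous in $\theta$ at $0$. The first is handled by the polynomiality of the structure constants noted above. For the second, each coefficient is a rational function in $\theta$ that is bounded near $0$, which I would verify by writing $[a]_q=q^{a-1}(1+\theta+\dots+\theta^{a-1})$ throughout. The remaining steps are routine.
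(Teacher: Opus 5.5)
Your proposal is correct and matches the paper's proof essentially step for step. For $P$ you use the same coefficient asymptotics $q^{2-l-i}$. For $R$ you use the same limits $\delta_{i,1},\delta_{j,1}$, then absorb $\widetilde T_{s_{n-1}}^{2}=\widetilde T_{s_{n-1}}$ and note that the remaining word is reduced. For $K$ your reduced-word check is the paper's argument. Reading the scan off the expansion in Proposition~\ref{prop:k1scan}, and explicitly justifying that the limit $\theta\to0$ commutes with products (polynomial structure constants, coefficients regular at $\theta=0$), are minor additions that make rigorous a step the paper leaves implicit.
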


\begin{proof}
We use twice that both
$$
  s_1 s_2\cdots s_{n-1}s_{n-2}\cdots s_1
  \qquad\text{and}\qquad
  s_{n-1}\cdots s_2 s_1 s_2\cdots s_{n-1}
$$
are reduced words for $(1\,n)$, of length $2n-3 = \ell\bigl((1\,n)\bigr)$.

\emph{The scan.} By Definition~\ref{def:three}(1), $K|_{\theta=0}$ is
$\widetilde T_{v}$ with $v$ the greedy product of
$$s_{n-1},\dots,s_2,s_1,s_1,s_2,\dots,s_{n-1}.$$ The repeated letter is absorbed by
$\widetilde T_{s_1}^{2} = \widetilde T_{s_1}$, and the remaining word
$s_{n-1}\cdots s_2s_1s_2\cdots s_{n-1}$ is reduced, so its greedy product is its ordinary
product $(1\,n)$. Hence $K|_{\theta=0} = \widetilde T_{(1\,n)}$.

\emph{The $k$--star shuffle.} By \eqref{eq:Pbasis} the coefficient of
$\widetilde T_{(i,\,n+1-l)}$ in $P$ is
$$
  \frac{q\,[n-1]_q}{[n]_q\,\Sigma_k}\;q^{\,n-l-i} .
$$
By Lemma~\ref{lem:qinteger-facts}(\ref{it:qintlead}), $[n]_q \sim q^{\,n-1}$, $[n-1]_q \sim q^{\,n-2}$ and
$\Sigma_k \sim [n-1]_q \sim q^{\,n-2}$, so this coefficient behaves like
$$
  q \cdot q^{\,n-2} \cdot q^{-(n-1)} \cdot q^{-(n-2)} \cdot q^{\,n-l-i}
  \;=\; q^{\,2-l-i},
$$
which tends to $0$ for every pair $(l,i) \neq (1,1)$ with $l,i \ge 1$, and to $1$ for
$(l,i) = (1,1)$; the coefficient $\frac{1}{[n]_q}$ of $\widetilde T_{e}$ also tends to $0$. Since
$(l,i) = (1,1)$ corresponds to $(i,\,n+1-l) = (1\,n)$, we get
$P|_{\theta=0} = \widetilde T_{(1\,n)}$, for every $k$.

\emph{Random--to--random.} By Lemma~\ref{lem:qinteger-facts}\ref{it:qintlead} the weights $\frac{q^{\,n-i}}{[n]_q}$ and
$\frac{q^{\,n-j}}{[n]_q}$ in Definition~\ref{def:three}(2) tend to $\delta_{i,1}$ and
$\delta_{j,1}$, so
$R|_{\theta=0} = \widetilde T_{s_1\cdots s_{n-1}}\,\widetilde T_{s_{n-1}\cdots s_1}
= \widetilde T_{v}$ with $v$ the greedy product of
$s_1\cdots s_{n-1}s_{n-1}\cdots s_1$. Again the repeated $s_{n-1}$ is absorbed, and the
remaining word $$s_1\cdots s_{n-1}s_{n-2}\cdots s_1$$ is reduced with product $(1\,n)$. Hence
$R|_{\theta=0} = \widetilde T_{(1\,n)}$.
\end{proof}

At $\theta = 0$ the chain is therefore deterministic, and its whole trajectory from the
identity can be written down.

\begin{corollary}\label{cor:theta0orbit}
At $\theta = 0$ the scan does the following. Writing $u = w^{-1}$ in one-line notation, it
compares the entries in positions $1,2,\dots,n-1$ and then in positions $n-1,\dots,1$, and
transposes the two entries at each comparison exactly when they are in increasing order.
Started at the identity, the $t$--th scan transposes the values $t$ and $n+1-t$ and changes
nothing else, so after $t$ scans the state is
$$
  (1\;n)\,(2\;n-1)\cdots(t\;\,n+1-t) .
$$
In particular the $n-2t$ values $t+1, \dots, n-t$ are still in their original increasing
order, whence
$$
  D \;=\; \binom{n-2t}{2},
  \qquad
  \ell \;=\; \binom n2 - \binom{n-2t}{2} \;=\; 2tn-2t^{2}-t ,
$$
and the longest element $w_0$ is reached only at $t = \lfloor \frac{n}{2} \rfloor$.
\end{corollary}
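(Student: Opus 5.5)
The plan is to use Theorem~\ref{prop:theta0}, which shows that at $\theta=0$ one scan is left multiplication by $\widetilde T_{(1\,n)}$ in $H(\infty)$. This is a single basis element, so the chain is deterministic: one step sends $w$ to the Demazure product $(1\,n)\star w$.

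\emph{Choosing the word.} The Demazure product of a reduced word depends only on the group element (the remark after Definition~\ref{def:subword}). I may therefore compute $(1\,n)\star w$ with whichever reduced word is convenient. I will use
\[
s_1s_2\cdots s_{n-1}s_{n-2}\cdots s_1 ,
\]
read in the order in which the Metropolis moves are applied. Up to the absorbed repeated letter, this is exactly the sequence of comparisons in the statement: positions $1,\dots,n-1$, then back down to $1$.

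\emph{Translating a single move.} Next I will rewrite one move $s_i\star w$ in terms of $u=w^{-1}$. Left multiplication $w\mapsto s_iw$ exchanges the values $i$ and $i+1$ in the one-line notation of $w$, which is the same as exchanging the entries in positions $i,i+1$ of $u$. Since $\ell(w)=\ell(u)$, the length goes up exactly when $u(i)<u(i+1)$. So by Definition~\ref{def:zerohecke}, $s_i\star w$ swaps the two entries of $u$ exactly when they are increasing and otherwise does nothing. This proves the first sentence of the corollary.

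\emph{The orbit.} I will prove by induction on $t$ that, for $2t\le n$, after $t$ scans
\[
u=\bigl(n,\,n-1,\dots,n+1-t,\;t+1,\,t+2,\dots,n-t,\;t,\,t-1,\dots,1\bigr).
\]
This is the one-line notation of the involution $(1\;n)\cdots(t\;\,n+1-t)$, so $u=w$.

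For the inductive step, start from the configuration after $t-1$ scans. In the forward pass, the comparisons inside the decreasing prefix, and the comparison between $n+2-t$ and $t$, find decreasing pairs and do nothing. The value $t$ then meets the increasing run $t+1,\dots,n+1-t$ and is carried to the right step by step until it sits just before $t-1$. After that all remaining pairs are decreasing. In the backward pass, the tail and the pair $(n+1-t,\,t)$ are decreasing. The run $t+1,\dots,n+1-t$ is increasing, so $n+1-t$ is carried leftward to position $t$, and the decreasing prefix is left untouched.

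The net effect of the $t$-th scan is to exchange the values $t$ and $n+1-t$, which gives the claimed form. I expect this bookkeeping to be the main obstacle. In particular I must check the edge cases where the middle run has length $0$ or $1$ ($n-2t\in\{0,1\}$), where the scan changes nothing further.

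\emph{Counting.} From the explicit $u$, a pair of values is a non-inversion exactly when both values lie in $\{t+1,\dots,n-t\}$. Hence $D=\binom{n-2t}{2}$, and
\[
\ell=\binom n2-\binom{n-2t}{2}=2tn-2t^2-t
\]
by direct expansion. Since $\ell=\binom n2$ exactly when $n-2t\le1$, the longest element $w_0$ is first reached at $t=\lfloor n/2\rfloor$.
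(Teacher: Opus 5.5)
Your proposal is correct and follows essentially the same route as the paper. Both argue by induction on $t$ with the explicit one-line configuration of $u$, track the ascending sweep carrying $t$ rightward and the descending sweep carrying $n+1-t$ leftward, and then count the $\binom{n-2t}{2}$ non-inversions inside the middle block. Your explicit justification of the $u=w^{-1}$ translation of $s_i\star w$, and of replacing the scan's literal word $s_{n-1}\cdots s_1s_1\cdots s_{n-1}$ by the reduced word $s_1\cdots s_{n-1}\cdots s_1$ of $(1\,n)$, fills in a step that the paper leaves implicit.
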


\begin{proof}
Induction on $t$. Suppose that before the $t$--th scan the state is
$$
  \bigl(\,n,\, n-1,\, \dots,\, n-t+2,\;\; t,\, t+1,\, \dots,\, n-t+1,\;\;
          t-1,\, \dots,\, 1\,\bigr),
$$
which for $t=1$ is the identity. In the outer blocks consecutive entries decrease, so no
comparison there transposes anything; inside the middle block they increase, so every
comparison there does. The ascending sweep therefore carries the smallest middle value $t$
rightwards to the far end of the middle block, and the descending sweep carries the largest
middle value $n+1-t$ leftwards to the near end. The two blocks each grow by one entry, the
middle block loses its two extreme values, and the net effect on the whole state is exactly
the transposition of $t$ and $n+1-t$. This is the displayed form with $t$ replaced by
$t+1$.

Consequently, after $t$ scans the values $t+1,\dots,n-t$ remain in increasing order and
every other pair is inverted, so the pairs that are not inversions are exactly the
$\binom{n-2t}{2}$ pairs inside the middle block. The middle block is empty, and the state
is $w_0$, precisely when $t = \lfloor \frac{n}{2}\rfloor$.
\end{proof}


\section{Total variation lower bound}\label{sec:l1lower}

By \eqref{eq:tvl2} the $\ell^{2}$ upper bounds of Section~\ref{sec:boundeigen} are also total
variation upper bounds, but the $\ell^{2}$ lower bounds of Section~\ref{sec:l2lower} say
nothing about total variation. We close with a lower bound in total variation itself.

Write $K_0$ for the short systematic scan run at $\theta = 0$, that is, for the
deterministic map of Corollary~\ref{cor:theta0orbit}.

\begin{lemma}\label{lem:coupling}
Let $T_{w_1},\ldots,T_{w_t} \in H(q)$, then the element of maximal Coxeter length appearing in the expansion of $T_{w_1}\ldots T_{w_t}$ is exactly $w_1 \star \ldots \star w_t$. Consequently, if $(X^t)$ is a Markov chain and $(Z^t_X)$ is its $\theta=0$ chain, as defined in Section~\ref{sec:demazure}, then

$$
\ell(X^t) \le \ell(Z^t_X)
$$

for all $t$.
\end{lemma}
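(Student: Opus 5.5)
We prove the first claim by induction on $t$, reducing to the case where each $w_i$ is a simple reflection. Write each $w_i$ as a reduced word. Then $T_{w_1}\cdots T_{w_t}=T_{s_{i_1}}\cdots T_{s_{i_m}}$ is a product of generators. The Demazure product is associative, and $w\star v=w v$ whenever $\ell(wv)=\ell(w)+\ell(v)$. Hence $w_1\star\cdots\star w_t=s_{i_1}\star\cdots\star s_{i_m}$, and it suffices to prove the claim for a product of $T_{s}$'s.

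For this we induct on $m$, proving the following slightly stronger statement. Every $T_v$ appearing in $T_{s_{i_1}}\cdots T_{s_{i_m}}$ with nonzero coefficient has $v\le s_{i_1}\star\cdots\star s_{i_m}$ in Bruhat order. Moreover, the top element $u=s_{i_1}\star\cdots\star s_{i_m}$ appears with a coefficient that is a polynomial in $q$ with positive leading term. Over $q>0$ all coefficients in the $T_w$-basis are polynomials in $q$ with nonnegative coefficients, so no cancellation occurs; this follows from $T_sT_w=T_{sw}$ or $(q-1)T_w+qT_{sw}$.

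For the inductive step we multiply on the left by $T_s$. The support of the new product is contained in $\{v, sv : v \text{ in the old support}\}$. We use the lifting property of Bruhat order: if $v\le u$, then $sv\le \max(u,su)=s\star u$. Together with $v\le u\le s\star u$, this shows every element of the new support is $\le s\star u$ in Bruhat order, and Bruhat order is compatible with length. The top element $s\star u$ does occur. If $\ell(su)>\ell(u)$, then $T_sT_u=T_{su}$ contributes it. If $\ell(su)<\ell(u)$, then $T_sT_u$ contributes $(q-1)T_u$ with $q>1$. Since all coefficients are nonnegative, nothing cancels. Thus $s\star u$ is the unique element of maximal length in the expansion.

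Next we pass to the normalized basis $\widetilde T_w$. The supports are unchanged, so the same conclusion holds for $\widetilde T_{w_1}\cdots\widetilde T_{w_t}$.

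For the Markov chain statement we couple $X$ and $Z_X$. At each step both chains use the same chosen $\widetilde T_w$ and the same reduced word. By Theorem~\ref{thm:metropolis-hecke}, the law of $X^t$ started at $\mathrm{id}$ along a fixed sequence of choices is supported on the support of $\widetilde T_{w_1}\cdots\widetilde T_{w_t}$ (acting as $\widetilde T_{w_t}\cdots$ in the appropriate order). Meanwhile $Z^t_X$ equals the corresponding Demazure product. By the first part, and in fact by the Bruhat statement, every state reachable by $X$ along these choices has length at most $\ell(Z^t_X)$. A general starting state $x$ is handled the same way by including $T_x$ as the rightmost factor.

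We expect the main obstacle to be the Bruhat lifting step, which controls the whole support rather than only the top element. We will cite it from \cite{bjorner2005combinatorics}. We will also need to be careful about the left/right ordering convention when matching the chain's transition with products in $H(q)$.
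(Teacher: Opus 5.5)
Your proposal is correct and follows the same route as the paper. You bound the support of the Hecke product above, in Bruhat order, by the Demazure product, and then obtain $\ell(X^t)\le\ell(Z^t_X)$ by coupling through the construction of $Z$ in Section~\ref{sec:demazure}. You also supply what the paper's terse proof leaves implicit: the lifting-property induction, which establishes its ``basic fact'' for the whole support, and the positivity argument using $q>1$, which shows the Demazure element actually occurs.

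One small slip: the coefficients are built from $q$ and $q-1$, so they are nonnegative only for $q\ge 1$, not for all $q>0$. Since you invoke $q>1$ exactly where it is needed, the argument is unaffected.
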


\begin{proof}
The multiplication of two words $\tilde T_{w} \tilde T_{w'}$ in $H(q)$ is equal to a linear combination $\sum_{v \in W} c_v\tilde T_v$. It is a basic fact that for any $\sigma_1,\ldots,\sigma_t \in {s_1,\ldots,s_{n-1}}$, $\sigma_1\ldots \sigma_t \le \sigma_1 \star \ldots \star \sigma_t$. The lemma follows from this fact and the construction of the chain $(Z^t_X)$.
\end{proof}

\begin{corollary}\label{cor:reduce}
Let $(X^{t})$ be one of the three chains: the short systematic scan $K$, the
$q$--deformed $k$--star transposition shuffle $P$ with
$1\leq k\leq n-1$, or the $q$--random--to--random shuffle $R$. Suppose the
chain is started at the identity. Then, for every 
$0\leq t\leq\lfloor \frac{n}{2}\rfloor$,

$$
  \ell\bigl(X^{t}\bigr)
  \;\leq\;
  \ell\bigl(K_{0}^{t}(\mathrm{id})\bigr)
  \;=\;
  2tn-2t^{2}-t.$$  
  \end{corollary}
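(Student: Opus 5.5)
The plan is to combine Lemma~\ref{lem:coupling} with Theorem~\ref{prop:theta0} and Corollary~\ref{cor:theta0orbit}. Fix one of the three chains $X\in\{K,P,R\}$ and write it as a convex combination $\sum_w c_w\widetilde T_w$ of normalized basis elements. For $K$ this is the single palindromic product. For $P$ it is \eqref{eq:Pbasis}. For $R$ it is the expansion of $\mathcal B_n^*(q)\mathcal B_n(q)$ into products $\widetilde T_{s_i\cdots s_{n-1}}\widetilde T_{s_{n-1}\cdots s_j}$. After $t$ steps from the identity, $X^t$ is distributed according to a mixture of products $\widetilde T_{w_t}\cdots\widetilde T_{w_1}$, where each $w_r$ lies in the support of $X$. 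By Lemma~\ref{lem:coupling}, every permutation reachable along such a history has length at most $\ell(w_t\star\cdots\star w_1)$. So it suffices to show that, for every admissible choice of $w_1,\dots,w_t$,
\[
\ell(w_t\star\cdots\star w_1)\le \ell\bigl(K_0^t(\mathrm{id})\bigr).
\]

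The first key step is to show that every element in the support of each chain lies below $(1\,n)$ in the subword (Bruhat) order. For $P$ the support consists of $e$ and the transpositions $(i\,m)$ with $m\le n$; the reduced word $s_i\cdots s_{m-1}\cdots s_i$ is a subword of $s_1\cdots s_{n-1}\cdots s_1$. For $R$, each term $s_i\cdots s_{n-1}\cdot s_{n-1}\cdots s_j$ is a subword of the word $s_1\cdots s_{n-1}s_{n-1}\cdots s_1$, whose Demazure product is $(1\,n)$ (this is computed in the proof of Theorem~\ref{prop:theta0}). For $K$, the word is the scan word itself, with Demazure product $(1\,n)$. The second step uses monotonicity of the Demazure product. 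If $u_r\le v_r$ in Bruhat order, then $u_t\star\cdots\star u_1\le v_t\star\cdots\star v_1$. This is standard: the Demazure product of a word is the Bruhat-maximal product over its subwords, so products of subwords of the concatenated words are dominated. Taking $v_r=(1\,n)$ for every $r$, we get $w_t\star\cdots\star w_1\le (1\,n)^{\star t}$, and therefore $\ell(w_t\star\cdots\star w_1)\le\ell((1\,n)^{\star t})$.

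The third step identifies $(1\,n)^{\star t}$, applied to $\mathrm{id}$, with $K_0^t(\mathrm{id})$. By Theorem~\ref{prop:theta0}, $K|_{\theta=0}=\widetilde T_{(1\,n)}$, and $K_0$ is exactly left Demazure multiplication by $(1\,n)$. Corollary~\ref{cor:theta0orbit} then gives $\ell(K_0^t(\mathrm{id}))=\binom n2-\binom{n-2t}{2}=2tn-2t^2-t$ for $0\le t\le\lfloor n/2\rfloor$. Combining the three steps yields the stated bound for all three chains.

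The main obstacle is the monotonicity step. Lemma~\ref{lem:coupling} is stated only for a fixed sequence of Hecke elements, so I must argue that replacing each $w_r$ by the Bruhat-larger $(1\,n)$ can only increase the Demazure product. I would handle this with the subword characterization: the Demazure product of a word $Q$ is the unique maximum, in Bruhat order, of the products of subwords of $Q$. A concatenation of reduced words for $w_t,\dots,w_1$ is then a subword of the $t$-fold repetition of the scan word, and the greedy product of a subword is dominated by that of the full word. Length is monotone in Bruhat order, which finishes the argument.
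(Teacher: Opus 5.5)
Your proof is correct. It uses the same three ingredients as the paper: Lemma~\ref{lem:coupling}, the $\theta=0$ degeneration, and the orbit computation of Corollary~\ref{cor:theta0orbit}. The middle step, however, is done differently, and your version is the more careful one.

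The paper's proof cites Theorem~\ref{prop:theta0} to identify the $\theta=0$ chain of each of $P,R,K$ with $K_0$, and then writes $\ell(Z^t_X)\le\ell\bigl(K_0^t(\mathrm{id})\bigr)$ without further argument. But Theorem~\ref{prop:theta0} describes the $q\to\infty$ limit, in which the weights $c_w$ degenerate as well. The chain $Z_X$ of Section~\ref{sec:demazure} keeps the fixed-$q$ weights. So for $P$ and $R$ it still selects, with positive probability, $\widetilde T_e$, the shorter transpositions $(i\,m)$, or the shorter words $s_i\cdots s_{n-1}s_{n-1}\cdots s_j$; it is not literally $K_0$.

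Your two observations supply exactly the missing justification. First, every support element (or word) is dominated by $(1\,n)$ in Bruhat order via the subword property. Second, the Demazure product is Bruhat-monotone, because $\delta(Q)$ is the Bruhat maximum of the products of subwords of $Q$. Together these show that any history $w_t,\dots,w_1$ satisfies $\ell(w_t\star\cdots\star w_1)\le\ell\bigl((1\,n)^{\star t}\bigr)$.

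You also correctly flag that Lemma~\ref{lem:coupling} only handles a fixed sequence of Hecke elements. Working with words throughout, as you do for $K$ and $R$, means you need only the length version of that lemma. Only for $K$ is no such argument needed, since there $Z_K=K_0$ exactly. The paper's route is shorter but leaves this domination implicit; yours makes the inequality hold as stated.
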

\
\begin{proof}

By Theorem~\ref{prop:theta0}, the $\theta=0$ chain associated with each of
$P$, $R$, and $K$ is the short systematic scan $K_0$.
Lemma~\ref{lem:coupling} therefore gives
\[
\ell(X^t)\le \ell(Z^t_X) \le \ell\bigl(K_0^t(\mathrm{id})\bigr) .
\]
The value on the right is computed in Corollary~\ref{cor:theta0orbit}, giving
\[
\ell(X^t)\le 2tn-2t^2-t.
\]
\end{proof}

\begin{lemma}[{\cite[Equation 2.13]{DiaconisRam2000}}]\label{lem:statistics}
In the type $A$ Hecke algebra $H(\theta)$ associated to $S_n$,
$$
  \mathbb{E}_\pi(\ell) = \sum_{j=2}^{n}\frac{j}{1-\theta^{j}} - \frac{n-1}{1-\theta},
  \qquad
  \mathrm{Var}_\pi(\ell) = \frac{(n-1)\theta}{(1-\theta)^{2}}
      - \sum_{j=2}^{n}\frac{j^{2}\theta^{j}}{(1-\theta^{j})^{2}} .
$$
In particular, writing $D(w) = \binom n2 - \ell(w)$ and
$\gamma = \dfrac{\theta}{1-\theta} = \dfrac{1}{q-1}$, the first identity rearranges to
\begin{equation}\label{eq:EDexact}
  \mathbb{E}_\pi(D)
  \;=\; \gamma\,(n-1) \;-\; \sum_{j=2}^{n}\frac{j\,\theta^{j}}{1-\theta^{j}} ,
\end{equation}
so that $\mathbb{E}_\pi(D) \le \gamma\,(n-1)$, and
$$
  \mathbb{E}_\pi(D) = \gamma\,(n-1) + O(1),
  \qquad
  \mathrm{Var}_\pi(D) = \mathrm{Var}_\pi(\ell)
    = \frac{\gamma\,(n-1)}{1-\theta} + O(1),
$$
both of order $n$ for fixed $\theta \in (0,1)$.
\end{lemma}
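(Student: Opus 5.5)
The approach is to use the product structure of the Mallows measure and to work with $D=\binom n2-\ell$ rather than $\ell$, since in the variable $\theta$ the terms for $D$ are the ones that decay. By the Lehmer code (inversion table), a permutation $w\in S_n$ corresponds bijectively to a tuple $(I_1,\dots,I_n)$ with $0\le I_j\le j-1$, where $\ell(w)=\sum_j I_j$. This is the combinatorial content of $P_{S_n}(q)=[n]_q!=\prod_{j}[j]_q$. Under $\pi(w)\propto q^{\ell(w)}$ the weight factorizes over $j$, so the $I_j$ are independent and $\mathbb P(I_j=i)\propto q^{i}$ on $\{0,\dots,j-1\}$.

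Set $M_j=j-1-I_j$. Then $D=\sum_j M_j$ with the $M_j$ independent, and $\mathbb P(M_j=m)\propto\theta^{m}$ on $\{0,\dots,j-1\}$. So each $M_j$ is a geometric variable with ratio $\theta$ truncated at $j-1$. I would compute its mean and variance from the log-derivative of its generating function $\sum_{m=0}^{j-1}\theta^m x^m=\frac{1-(\theta x)^j}{1-\theta x}$. Differentiating $\log$ at $x=1$ gives
\[
\mathbb E M_j=\frac{\theta}{1-\theta}-\frac{j\theta^j}{1-\theta^j}.
\]
Applying $x\frac{d}{dx}$ once more gives
\[
\mathrm{Var}\, M_j=\frac{\theta}{(1-\theta)^2}-\frac{j^2\theta^j}{(1-\theta^j)^2}.
\]
The $j=1$ term vanishes in both. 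Summing over $j=2,\dots,n$ and using independence gives \eqref{eq:EDexact} immediately, together with the stated variance formula; note $\mathrm{Var}_\pi(D)=\mathrm{Var}_\pi(\ell)$ because the two differ by a constant.

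To recover the stated formula for $\mathbb E_\pi(\ell)=\binom n2-\mathbb E_\pi(D)$, I would use two rewritings:
\[
\frac{j}{1-\theta^j}=j+\frac{j\theta^j}{1-\theta^j},
\qquad
\frac{n-1}{1-\theta}=(n-1)+(n-1)\gamma.
\]
These reduce the claim to the identity $\sum_{j=2}^n j-(n-1)=\binom n2$, which is a direct check.

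For the remaining claims:
\begin{enumerate}
\item The inequality $\mathbb E_\pi(D)\le\gamma(n-1)$ holds because every subtracted term in \eqref{eq:EDexact} is positive.
\item The $O(1)$ statements follow because $\sum_{j\ge2}j\theta^j/(1-\theta^j)$ and $\sum_{j\ge2}j^2\theta^j/(1-\theta^j)^2$ are convergent series for fixed $\theta\in(0,1)$, bounded by $(1-\theta^2)^{-2}\sum_j j^2\theta^j$.
\item Finally, $\frac{(n-1)\theta}{(1-\theta)^2}=\frac{\gamma(n-1)}{1-\theta}$.
\end{enumerate}

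The only step needing any care is the second-derivative computation for the truncated geometric variance. I would organize it via the cumulant generating function so that the $\theta$-geometric part and the truncation correction separate additively, as displayed above. Everything else is bookkeeping.
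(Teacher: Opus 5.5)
Your proof is correct, and it differs from the paper's mainly in that you actually prove the two displayed identities. The paper imports the formulas for $\mathbb{E}_\pi(\ell)$ and $\mathrm{Var}_\pi(\ell)$ directly from Diaconis--Ram (their Equation 2.13). It then only performs the rearrangement $\frac{j}{1-\theta^j}=j+\frac{j\theta^j}{1-\theta^j}$, $\sum_{j=2}^n j=\binom n2+n-1$, $\frac{1}{1-\theta}-1=\gamma$, followed by the same positivity and convergent-tail observations you make.

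You instead use the inversion-table factorization of the Mallows measure to write $D=\sum_j M_j$ with independent truncated geometric $M_j$, and read off means and variances from the cumulant generating function. You also run the bookkeeping in the opposite direction: you derive the formula for $\mathbb{E}_\pi(D)$ first and recover the stated formula for $\mathbb{E}_\pi(\ell)$ from it. Your cumulant computations are right, including the vanishing $j=1$ term.

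What your route buys is self-containedness and a structural explanation of why $D$ is the natural statistic. It is a sum of $n-1$ independent variables, each with mean $\gamma$ and variance $\frac{\gamma}{1-\theta}$ up to corrections that are exponentially small in $j$. This makes the linear-in-$n$ leading terms, the sign of the corrections, and the ``order $n$'' claim transparent. Independence with geometric tails would also give exponential (Bernstein-type) concentration of $D$ under $\pi$, which is stronger than the Chebyshev/Markov bounds the paper uses later. The paper's route is shorter, but it relies entirely on the cited reference for the substantive content.
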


\begin{proof}
The two displayed identities are \cite[Equation 2.13]{DiaconisRam2000}. For
\eqref{eq:EDexact}, write $\frac{j}{1-\theta^{j}} = j + \frac{j\theta^{j}}{1-\theta^{j}}$
and use $\sum_{j=2}^{n} j = \binom n2 + n - 1$, so that
$$
  \mathbb{E}_\pi(D)
  = \binom n2 - \mathbb{E}_\pi(\ell)
  = \frac{n-1}{1-\theta} - (n-1) - \sum_{j=2}^{n}\frac{j\theta^{j}}{1-\theta^{j}} ,
$$
and $\frac{1}{1-\theta}-1 = \gamma$. Each term of the remaining sum is positive and the
whole sum is $O(1)$ for fixed $\theta<1$, giving both the inequality and the estimate. The
same expansion gives $\mathrm{Var}_\pi(\ell) = \frac{(n-1)\theta}{(1-\theta)^{2}}+O(1)$ and
$\frac{\theta}{(1-\theta)^{2}} = \frac{\gamma}{1-\theta}$.
\end{proof}

Corollary~\ref{cor:reduce} and Lemma~\ref{lem:statistics} are the two halves of the
argument: the first says the chain cannot yet have sorted the deck, the second that a
$\pi$--typical permutation is sorted. Comparing them separates the two laws.

\begin{theorem}\label{thm:l1}
Let $q>1$ be fixed and let $(X^{t})$ be the chain determined by multiplication by $P$, for
any $1\le k\le n-1$, by $R$ or by $K$. Then for every integer $t$ with $n-2t\ge2$ and
$\binom{n-2t}{2} > \mathbb{E}_\pi(D)$,
\begin{equation}\label{eq:master-tv}
  \bigl\|X^{t}_{\mathrm{id}} - \pi\bigr\|_{\mathrm{TV}}
  \;\ge\; 1 \;-\; \frac{\mathrm{Var}_\pi(\ell)}
                       {\Bigl(\binom{n-2t}{2} - \mathbb{E}_\pi(D)\Bigr)^{2}} .
\end{equation}
In particular, write $\gamma = \dfrac{\theta}{1-\theta} = \dfrac{1}{q-1}$ and
\begin{equation}\label{eq:tvthreshold}
  u \;=\; \frac n2 \;-\; \sqrt{\frac{\gamma\,(n-1)}{2}} \;-\; t
    \;=\; \frac n2 \;-\; \sqrt{\frac{n-1}{2(q-1)}} \;-\; t .
\end{equation}
Then, for $u \ge 1$ and $n$ large,
$$
  \bigl\|X^{\,t}_{\mathrm{id}} - \pi\bigr\|_{\mathrm{TV}} \;\ge\; 1 - O\bigl(u^{-2}\bigr) ,
$$
so the distance tends to $1$ along any sequence of integers $t = t(n)$ with
$u \to \infty$.
\end{theorem}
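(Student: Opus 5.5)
The plan is a distinguishing-statistic argument with $D(w)=\binom n2-\ell(w)$, the number of non-inversions, as the statistic. Set
\[
A=\Bigl\{w\in S_n:\ D(w)\ge \tbinom{n-2t}{2}\Bigr\},
\]
so that $\|X^t_{\mathrm{id}}-\pi\|_{\mathrm{TV}}\ge X^t_{\mathrm{id}}(A)-\pi(A)$.

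The first step is to show that $X^t_{\mathrm{id}}(A)=1$. Since $n-2t\ge2$, we have $t\le\lfloor n/2\rfloor$, so Corollary~\ref{cor:reduce} applies. It gives $\ell(X^t)\le 2tn-2t^2-t=\binom n2-\binom{n-2t}{2}$ almost surely, for each of $P$, $R$ and $K$. Equivalently $D(X^t)\ge\binom{n-2t}{2}$ with probability one.

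The second step is to bound $\pi(A)$. Under $\pi$, Chebyshev's inequality together with $\mathrm{Var}_\pi(D)=\mathrm{Var}_\pi(\ell)$ gives, whenever $\binom{n-2t}{2}>\mathbb{E}_\pi(D)$,
\[
\pi(A)\le \pi\Bigl(D-\mathbb{E}_\pi D\ge \tbinom{n-2t}{2}-\mathbb{E}_\pi D\Bigr)\le \frac{\mathrm{Var}_\pi(\ell)}{\bigl(\binom{n-2t}{2}-\mathbb{E}_\pi(D)\bigr)^2}.
\]
Combining the two steps yields \eqref{eq:master-tv}.

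For the asymptotic statement, write $m=n-2t=2u+2\sqrt{\gamma(n-1)/2}$ and set $s=\sqrt{\gamma(n-1)/2}$, so that $m=2(u+s)$. Then
\[
\tbinom{m}{2}=2(u+s)^2-(u+s)=2s^2+4us+2u^2-u-s.
\]
Lemma~\ref{lem:statistics} gives $\mathbb{E}_\pi(D)\le\gamma(n-1)=2s^2$, so the gap satisfies
\[
\tbinom{m}{2}-\mathbb{E}_\pi(D)\ge 4us+2u^2-u-s.
\]
For $u\ge1$ this is at least $3us+u^2$ up to lower-order terms; the $-s$ is absorbed by $us$ once $u\ge1$ is not too small, and we may take $u\ge 2$ if needed. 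In particular the gap is positive, so the hypothesis of \eqref{eq:master-tv} holds and the gap is $\gtrsim u(s+u)$. Lemma~\ref{lem:statistics} also gives $\mathrm{Var}_\pi(\ell)=\gamma(n-1)/(1-\theta)+O(1)=O(s^2)$. Hence the error term is
\[
O\!\left(\frac{s^2}{u^2(s+u)^2}\right)\le O(u^{-2}),
\]
which tends to $0$ as $u\to\infty$.

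The only delicate point is the bookkeeping in this last estimate: the linear terms $-u-s$ must be controlled uniformly in $n$, which is why the restriction $u\ge1$ is imposed. The $O(1)$ in the variance is uniform for fixed $q$. The probabilistic content is entirely in Corollary~\ref{cor:reduce}, which we take as given.
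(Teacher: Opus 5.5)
Your proposal is correct and follows the paper's proof essentially verbatim. You combine the almost-sure bound $D(X^t)\ge\binom{n-2t}{2}$ from Corollary~\ref{cor:reduce} with a Chebyshev bound under $\pi$ (your set is the complement of the paper's $A$, which gives the same estimate), and then expand $\binom{2(u+s)}{2}$ exactly as the paper does. Your hedging in the last step is unnecessary, since $4us+2u^2-u-s\ge 3us+u^2$ holds exactly for every $u\ge1$, with no lower-order terms and no need to assume $u\ge2$.
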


\begin{proof}
Set $A = \bigl\{\sigma \in S_n : D(\sigma) < \binom{n-2t}{2}\bigr\}$. By
Corollary~\ref{cor:reduce}, $\ell(X^{t}) \le 2tn-2t^{2}-t$ for every realisation, that is
$D(X^{t}) \ge \binom{n-2t}{2}$, so $\mathbb{P}_{\mathrm{id}}(X^{t} \in A) = 0$. On the other
hand $\sigma \notin A$ forces
$$
  \bigl|D(\sigma) - \mathbb{E}_\pi(D)\bigr|
  \;\ge\; \binom{n-2t}{2} - \mathbb{E}_\pi(D) \;>\; 0 ,
$$
so Chebyshev's inequality gives
$\pi(S_n \setminus A) \le \frac{\mathrm{Var}_\pi(D)}{\bigl(\binom{n-2t}{2}-\mathbb{E}_\pi(D)\bigr)^{2}}$
and hence
$$
  \bigl\|X^{t}_{\mathrm{id}}-\pi\bigr\|_{\mathrm{TV}}
  \;\ge\; \pi(A) - \mathbb{P}_{\mathrm{id}}\bigl(X^{t}\in A\bigr)
  \;=\; \pi(A)
  \;\ge\; 1 - \frac{\mathrm{Var}_\pi(\ell)}
                   {\bigl(\binom{n-2t}{2}-\mathbb{E}_\pi(D)\bigr)^{2}} ,
$$
using $\mathrm{Var}_\pi(D) = \mathrm{Var}_\pi(\ell)$. Replacing Chebyshev's inequality by
Markov's gives 
$$
  \bigl\|X^{t}_{\mathrm{id}}-\pi\bigr\|_{\mathrm{TV}}
  \;\ge\; 1 - \frac{\mathbb{E}_\pi(D)}{\binom{n-2t}{2}}
  \;\ge\; 1 - \frac{n}{(q-1)\dbinom{n-2t}{2}},
$$
valid whenever $n-2t \ge 2$, which is the form quoted in Theorem~\ref{T-tv}; the second
inequality uses $\mathbb{E}_\pi(D) \le \frac{(n-1)\theta}{1-\theta} = \frac{n-1}{q-1}$,
which is \eqref{eq:EDexact} together with $\theta \in (0,1)$.

For the last assertion put $\sigma = \sqrt{\frac{\gamma(n-1)}{2}}$, so that
$2\sigma^{2} = \gamma(n-1)$, and write $t = \frac n2 - s$ with $s = \sigma+u$. Then
$n-2t \ge 2s$, and since $x \mapsto \binom x2$ is increasing for $x \ge 1$,
$$
  \binom{n-2t}{2} \;\ge\; \frac{2s\,(2s-1)}{2} \;=\; 2s^{2}-s .
$$
By \eqref{eq:EDexact}, $\mathbb{E}_\pi(D) \le \gamma(n-1) = 2\sigma^{2}$, so
$$
  \binom{n-2t}{2} - \mathbb{E}_\pi(D)
  \;\ge\; 2s^{2}-2\sigma^{2}-s
  \;=\; 4u\sigma + 2u^{2} - \sigma - u .
$$
For $u \ge 1$ one has $4u\sigma-\sigma \ge 3u\sigma$ and $2u^{2}-u \ge 0$, so this is at
least $3u\sigma$, which is positive; hence \eqref{eq:master-tv} applies. Since
$\sigma^{2} = \Theta(n)$ and, by Lemma~\ref{lem:statistics},
$\mathrm{Var}_\pi(\ell) = \frac{\gamma(n-1)}{1-\theta}+O(1) = O(n)$, the subtracted term in
\eqref{eq:master-tv} is at most
$\frac{O(n)}{\big(3u\sigma\bigr)^{2}} = O\bigl(u^{-2}\bigr)$.

\end{proof}

Together with the upper bounds of Section~\ref{sec:boundeigen} this brackets the mixing
time from both sides.

\begin{corollary}\label{cor:tvmix}
Fix $q>1$ and let $X$ be $P$, for any $1\le k\le n-1$, or $R$, or $K$.
For every $\delta\in(0,1)$, there exist constants $C_\delta$ and $c_\delta$,
depending only on $\delta$ and $q$, such that, for all sufficiently large $n$,
\begin{equation}\label{eq:bracket}
\frac n2-\sqrt{\frac{n-1}{2(q-1)}}-C_\delta
< t_{\mathrm{mix}}(\delta)
< \frac n2+\frac12\log_q n+c_\delta.
\end{equation}
In particular,

\[
\lim_{\alpha\to\infty}
\lim_{n\to\infty}
d_n\left(\frac{n}{2}-\alpha\sqrt{n}\right)
=1,
 \qquad
\lim_{\alpha\to\infty}
\lim_{n\to\infty}
d_n\left(\frac{n}{2}+\alpha\sqrt{n}\right)
=0.
\]

Thus the three chains exhibit total variation cutoff at $\frac n2$ with a
window of at most order $\sqrt n$.
\end{corollary}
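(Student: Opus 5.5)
The plan is to obtain both inequalities in \eqref{eq:bracket} from results already proved, and then read off the $\sqrt n$ statements from them.

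\emph{Upper bound.} By Theorem~\ref{T-worst}, $d_n(t)$ is controlled by the chain started at the identity. Combining \eqref{eq:tvl2} with Theorem~\ref{thm:upper} (for $P$), Corollary~\ref{cor:r2r} (for $R$) and Theorem~\ref{thm:scan} (for $K$), we get at $t=\frac12(n+\log_q n+c)$
\[
2d_n(t)\le \Bigl(n\,q^{-\frac{8}{100}n^2+O(n\log n)}+e^2\sqrt{e^{2A^2}-1}\Bigr)^{1/2},
\qquad A=\frac{e\,q^{-c}}{q-1}.
\]
The first term vanishes as $n\to\infty$, and $e^2\sqrt{e^{2A^2}-1}\to0$ as $c\to\infty$. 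Given $\delta$, choose $c=c'_\delta$ so that the right side is below $2\delta$ for large $n$. Then $t_{\mathrm{mix}}(\delta)\le \frac n2+\frac12\log_q n+\frac{c'_\delta}{2}+1$, where the $+1$ accounts for integer rounding. Set $c_\delta$ accordingly. Since $\frac12\log_q n=o(\sqrt n)$, we have $\frac n2+\alpha\sqrt n\ge t_+$ for every fixed $c$ once $n$ is large. Monotonicity of $d_n$ in $t$ then gives $\lim_n d_n(\frac n2+\alpha\sqrt n)=0$ for every $\alpha>0$, which yields the second limit.

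\emph{Lower bound.} Apply Theorem~\ref{thm:l1}, which gives $\|X^t_{\mathrm{id}}-\pi\|_{\mathrm{TV}}\ge 1-C u^{-2}$ for $u\ge1$ and large $n$, with $u$ as in \eqref{eq:tvthreshold} and $C$ depending only on $q$. The one point to check is that the $O(u^{-2})$ constant is uniform in $n$. This follows from the proof: the subtracted term is at most $\mathrm{Var}_\pi(\ell)/(3u\sigma)^2$, and $\mathrm{Var}_\pi(\ell)/\sigma^2$ is bounded in $n$ by Lemma~\ref{lem:statistics}.

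Choose $C_\delta\ge1$ with $C C_\delta^{-2}<1-\delta$. Then every integer $t\le \frac n2-\sqrt{\frac{n-1}{2(q-1)}}-C_\delta$ has $u\ge C_\delta$, so $d_n(t)\ge\|X^t_{\mathrm{id}}-\pi\|_{\mathrm{TV}}>\delta$. This gives the left inequality of \eqref{eq:bracket}.

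For the first limit, write $t=\frac n2-\alpha\sqrt n$, rounded to an integer, and use the explicit Markov form from Theorem~\ref{T-tv}: $n-2t\approx2\alpha\sqrt n$, so $\binom{n-2t}{2}\approx 2\alpha^2 n$. This gives $\liminf_n d_n\ge1-\frac{1}{2(q-1)\alpha^2}$, which tends to $1$ as $\alpha\to\infty$.

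The main obstacle is bookkeeping rather than new mathematics. On the lower side, the issue is the uniformity in $n$ of the $O(u^{-2})$ constant described above. On the upper side, it is converting the $\ell^2$ statement $\lim_{c}\limsup_n$ into a fixed $c_\delta$, which works because the bound's limit in $n$ is explicit and decreasing in $c$.
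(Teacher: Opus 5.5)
Your proposal is correct and follows the paper's proof. The upper bound comes from the $\ell^2$ estimates of Theorem~\ref{thm:upper}, Corollary~\ref{cor:r2r} and Theorem~\ref{thm:scan}, combined with \eqref{eq:tvl2} and Theorem~\ref{T-worst}; the lower bound comes from Theorem~\ref{thm:l1} by taking $u\ge C_\delta$. There are two small differences. For the first double limit you use the Markov form $1-\frac{1}{2(q-1)\alpha^2}$ from Theorem~\ref{T-tv}, whereas the paper uses $u\to\infty$ for $\alpha>1/\sqrt{2(q-1)}$; both work. You also explicitly check that the $O(u^{-2})$ constant is uniform in $n$, which the paper leaves implicit.
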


\begin{proof}
For the lower bound in \eqref{eq:bracket}, Theorem~\ref{thm:l1} gives

$$
\|X^t_{\mathrm{id}}-\pi\|_{\mathrm{TV}}\ge 1-O(u^{-2}),
$$

where $u$ is as in \eqref{eq:tvthreshold}. Taking $u\ge C_\delta$ gives the
desired lower bound.

For the upper bound, let $t_c=\frac12(n+\log_q n+c)$. By
Theorem~\ref{thm:upper}, Corollary~\ref{cor:r2r}, or
Theorem~\ref{thm:scan}, according to $X$, choose $c$ so that

$$
\lim_{n\to \infty}
\left\|\frac{X^{ t_c}_{\mathrm{id}}}{\pi}-1\right\|_2
\le 2\delta.
$$

Then \eqref{eq:tvl2} and Theorem~\ref{T-worst} give, for all sufficiently
large $n$,

$$
\max_x\|X_x^{ t_c}-\pi\|_{\mathrm{TV}}\le\delta,
$$

which yields the upper bound in \eqref{eq:bracket}.

Now take $t=\frac{n}{2}-\alpha\sqrt n$. Then

$$
u\ge
\Bigl(\alpha-\frac1{\sqrt{2(q-1)}}\Bigr)\sqrt n+O(1)\longrightarrow\infty
$$

whenever $\alpha>\frac{1}{\sqrt{2(q-1)}}$, so Theorem~\ref{thm:l1} gives
$d_n(t)\to1$.

On the other hand, for every $\alpha>0$,

$$
\frac12\log_q n+c_\delta\le\alpha\sqrt n
$$

for all sufficiently large $n$. Hence \eqref{eq:bracket} gives
$d_n( \frac{n}{2}+\alpha\sqrt n)\le\delta$, and letting $\delta\to0$
gives the upper limit. The two double limits, and hence the cutoff statement, follow.
\end{proof}

\begin{proof}[Proof of Theorem~\ref{T-tv}]
By Theorem~\ref{thm:l1}, for every integer $t$ such that $n-2t\ge 2$,
\[
\left\|X_{\mathrm{id}}^t-\pi\right\|_{\mathrm{TV}}
\ge
1-
\frac{n}
{(q-1)\binom{n-2t}{2}}.
\]

Now take
\[
t=\frac{n}{2}-\alpha\sqrt{n},
\]
with integer parts understood. Then
\[
n-2t=2\alpha\sqrt{n}+O(1),
\]
and therefore
\[
\binom{n-2t}{2}
=
2\alpha^2 n+O(\sqrt{n}).
\]
It follows that, for every $\alpha>0$,
\[
\lim_{n\to\infty}
d_n\left(\frac{n}{2}-\alpha\sqrt{n}\right)
\ge
1-\frac{1}{2(q-1)\alpha^2}.
\]

For the upper bound, Theorem~\ref{thm:upper}, Corollary~\ref{cor:r2r}, and Theorem~\ref{thm:scan} together with 
\[
2\left\|\mu-\pi\right\|_{\mathrm{TV}}
\le
\left\|\frac{\mu}{\pi}-1\right\|_2,
\]
give, for every $\alpha>0$,
\[
d_n\left(\frac{n}{2}+\alpha\sqrt{n}\right)
\longrightarrow 0
\qquad\text{as } n\to\infty.
\]

\end{proof}

\begin{remark}\label{rem:window}
The $\sqrt n$ window comes from using the almost sure bound $D(X^t)\geq \binom{n-2t}{2}$ from Corollary~\ref{cor:reduce}. This bound stops being useful when
$\binom{n-2t}{2}$ is of the same order as
$\mathbb{E}_{\pi(D)}=\Theta(n)$, namely when $n-2t=O(\sqrt n)$. A smaller window
would require finer control of $\mathbb{E}_{\mathrm{id}}(D(X^t))$.
\end{remark}

\begin{remark}\label{rem:identity}
The lower bounds are proved from the identity, which is enough to bound
$t_{\mathrm{mix}}$ from below. The linear term comes from leaving the identity:
by Theorem~\ref{thm:upperavg}, a chain started from a $\pi$--typical permutation
mixes in $\log_q n+c$ steps, since such a permutation already has length
$\binom n2-O(n)$.
\end{remark}



We expect that the $O(\sqrt n)$ in \eqref{eq:bracket} is an artifact of the method, and
that the total variation mixing time in fact sits at the same place as the $\ell^{2}$ one.

\begin{Con}\label{conj:tvcutoff}
Fix $q>1$ and let $X$ be $P$, for any $1\le k\le n-1$, or $R$, or $K$, and put
$t^{\pm}_{c} = \frac12\bigl(n+\log_q n \pm c\bigr)$. Then $X$ has total variation cutoff
at $\frac12\bigl(n+\log_q n\bigr)$ with window $O(1)$:
$$
  \lim_{c\to\infty}\ \lim_{n\to\infty}\
    d_n\bigl( t^{-}_{c}\bigr) \;=\; 1,
  \qquad
  \lim_{c\to\infty}\ \lim_{n\to\infty}\
    d_n\bigl( t^{+}_{c}\bigr) \;=\; 0 .
$$
Equivalently, $t_{\mathrm{mix}}(\delta) = \frac12\bigl(n+\log_q n\bigr)+O(1)$ for every
fixed $\delta\in(0,1)$: nothing is gained by passing from $\ell^{2}$ to
total variation.
\end{Con}

The second limit has already been proved: it is Theorem~\ref{thm:upper} (respectively
Corollary~\ref{cor:r2r}, Theorem~\ref{thm:scan}) together with \eqref{eq:tvl2} and
Theorem~\ref{T-worst}. Only the first is open, and by
Corollary~\ref{cor:tvmix} it is open only in the range
$\frac n2 - O(\sqrt n) \le t \le \frac n2 + \frac12\log_q n$.



\section*{Acknowledgements}

We are deeply grateful to Persi Diaconis for many generous and illuminating conversations, from which this work has benefited greatly. We also thank our advisor, Evita Nestoridi, for her invaluable guidance throughout
this project. The first author thanks Colin Defant for useful discussions.

\paragraph{Statement on the Use of Artificial Intelligence.}
All of the mathematics in this paper is due to the authors: no proof was obtained with the assistance of artificial intelligence, and the authors are solely
responsible for the correctness of every statement and proof. Large language models were used in preparing the manuscript to improve the readability of
the exposition, to check the internal consistency of notation, labels and
cross--references, and to run numerical simulations.

\printbibliography

\end{document}